\DeclareMathAlphabet{\pazocal}{OMS}{zplm}{m}{n}
\theoremstyle{plain} 
\newtheorem{theorem}{Theorem}[section]
\newtheorem*{theorem*}{Theorem}
\newtheorem{corollary}[theorem]{Corollary}
\newtheorem{lemma}[theorem]{Lemma}
\newtheorem{fact}[theorem]{Fact}
\theoremstyle{definition}
\newtheorem{definition}[theorem]{Definition}
\newtheorem{remark}[theorem]{Remark}
\newtheorem*{claim*}{Claim}
\numberwithin{equation}{section}
\def\R{\mathbb{R}}
\def\Z{\mathbb{Z}}
\def\Aff{\operatorname{Aff}}
\def\Aut{\operatorname{Aut}}
\def\F{{\mathcal{F}}}
\def\P{\operatorname{P}}
\def\SOL{{\operatorname{Sol}} }
\def\Isom{{\operatorname{Isom}}}
\def\ZZ{{\operatorname{Z}} }
\def\heis{{\mathfrak{heis}}}
\def\s{{\mathfrak{s}}}
\def\Aut{{\operatorname{Aut}}}
\def\Heis{{\operatorname{Heis}}}
\def\PSL{{\operatorname{PSL}}}
\def\GL{{\operatorname{GL}}}
\def\O{{\operatorname{O}}}
\def\SO{{\operatorname{SO}}}
\def\SU{{\operatorname{SU}}}
\def\SL{{\operatorname{SL}}}
\def\Lie{{\operatorname{Lie}}}
\def\Span{{\operatorname{span}}}
\def\tr{{\operatorname{tr}}}
\def\det{{\operatorname{det}}}
\def\l{{\mathfrak{l}}}
\def\g{{\mathfrak{g}}}
\def\so{{\mathfrak{so}}}
\def\s{{\mathfrak{s}}}
\def\i{{\mathfrak{i}}}
\newcommand{\ad}{\mathrm{ad}}
\newcommand{\Ad}{\mathrm{Ad}}
\definecolor{MyBlue}{RGB}{0,0,255}
\definecolor{MyRed}{RGB}{255,0,0}
\definecolor{MyGray}{RGB}{150,60,60}
\newcommand{\be}{\begin{equation}}
	\newcommand{\ee}{\end{equation}}
\newcommand{\ben}{\begin{enumerate}}
	\newcommand{\een}{\end{enumerate}}
\newcommand{\bit}{\begin{itemize}}
	\newcommand{\eit}{\end{itemize}}
\newcommand{\edoc}{\end{document}}
\def\br#1\er{{#1}} 
\def\bw#1\ew{\textcolor{brown}{#1}} 
\def\bb#1\eb{\textcolor{blue}{#1}} 
\def\br#1\er{\textcolor{red}{#1}} 
\def\bm#1\em{\textcolor{magenta}{#1}}
\def\bv#1\ev{\textcolor{olive}{#1}}
\newtheorem{thm}{Theorem}[section]
\newtheorem{proposition}[thm]{Proposition}
\def\Lie{{\operatorname{Lie}}}
\author{S. Allout} 
\address{Faculty of mathematics, Ruhr-Universitaet Bochum, Germany}
\email{souheib.allout@rub.de}%
\author{A. Belkacem}
\address{Department of mathematics, University of Batna 2, Algeria}
 \email{abderrahmane.belkacem.matea@gmail.com}
\author{A. Zeghib}
\address{UMPA, CNRS, ENS de Lyon, France}
  \email{abdelghani.zeghib@ens-lyon.fr}
\begin{document}

\title{\textbf{On homogeneous 3-dimensional spacetimes: focus on plane waves}}
% Force line breaks with \\

\date{}

 \begin{abstract} We revisit the classification of Lorentz homogeneous spaces of dimension $3$, and relax usual completeness assumptions. In particular, non-unimodular elliptic plane waves, and only them, are neither locally symmetric nor locally isometric to a left-invariant Lorentz metric on a $3$-dimensional Lie group. We characterize homogeneous plane waves in dimension $3$, and prove they are non-extendable, and geodesically complete only if they are symmetric. Finally, only one non-flat plane wave has a compact model. 

 \end{abstract}
\maketitle
  \tableofcontents

\section{Introduction}\label{s1}
In dimension two, homogeneous pseudo-Riemannian spaces have constant curvature, and so they have the maximal possible symmetries. This is no longer true in higher dimensions, some homogeneous spaces are more homogeneous than others!  This applies to any pseudo-Riemannian type, but let us focus on the Lorentzian case in dimension three. We have two (non-disjoint) distinguished classes:
 
$\bullet$ On the one hand, the symmetric spaces. Among them, the most symmetric are those of constant sectional curvature, they are three, up to scaling, and they have isometry groups of maximal dimension equal to six. We then have three decomposable symmetric spaces, and two reducible non-decomposable (Cahen-Wallach) spaces having solvable isometry groups of dimension four.

$\bullet$ On the other hand, there is the wide world of {\it Lorentz groups}, i.e. Lie groups of dimension three endowed with a left-invariant Lorentz metric. They, generically, have an isometry group equal to the supporting group itself, but can sometimes have extra isometries.

One may ask whether any homogeneous Lorentz space of dimension $3$ fits into one of these two classes, that is, whether any homogeneous Lorentz space of dimension three is locally symmetric or locally isometric to a left-invariant metric on a Lie group of dimension three. This was proven in the Riemannian case in \cite{Sek} and in the Lorentzian case, assuming completeness, in \cite{Cal1}.

The initial motivation of the present work was, on the one hand, to provide a simpler proof of this fact in the Lorentzian case, say a Lie theoretical proof instead of using tools
from the homogeneous structures theory, which involves complicated computations. On the other hand, we aimed to fix some completeness as well as local vs. global issues in the literature. Trying this, we realized that there is a one-parameter family of examples of (incomplete) homogeneous Lorentz spaces that are neither locally symmetric nor locally isometric to Lorentz groups. In \cite{DZ} the same family was ruled out incorrectly in  the proofs of the classification of homogeneous Lorentz geometries having compact models, which we fix here without altering the statements  (see  \ref{Thurston}). 
 
\subsection{Results on plane waves} Our first results concern the global structure of $3$-dimensional \textit{plane waves}. Most studies on the subject have a mathematical-physical origin and usually consider infinitesimal and local aspects, e.g. Killing fields, $\dots$ \cite{BO, Blau, GRT}, instead of global isometry groups as in our present work. Actually, \cite{BO} contains classification results on homogeneous plane waves of arbitrary dimension, but from the point of view of theoretical physics. 
 
\subsubsection{Heisenberg extensions}

Let $\Heis$ denote the $3$-dimensional Heisenberg group and $\heis$ denote its Lie algebra. Consider a semi-direct product $G_{\rho}= \R \ltimes  \Heis$, where $\R$ acts on $\Heis$ via a representation $t \to \rho(t)\in \Aut(\Heis)$. 
\begin{theorem}\label{Theorem 1.1} Let $I$ be any non-central one parameter subgroup of $\Heis$ generated by a vector $W \in \heis$ and put $\P_{\rho}=G_{\rho}/I$. Then we have
 \begin{enumerate}
     \item The $G_{\rho}$-action on $\P_{\rho}$ preserves a Lorentz metric if and only if the projection of $W$ to $\heis/ \ZZ$ is not an eigenvector of the action of $\rho(t)$ on $\heis/ \ZZ$ where $\ZZ$ is the center of $\heis$. In addition, this Lorentz metric is unique up to scaling and automorphisms fixing $W$.
 \item There is no $3$-dimensional Lie group acting on $\P_{\rho}$ isometrically with an open orbit if and only if the action of $\rho(t)$ on $\heis/ \ZZ$ is by similarities with non-real eigenvalues.
 \item The space $\P_{\rho}$ is symmetric exactly for those representations $t \to \rho(t)\in \Aut(\Heis)$ fixing the center $\ZZ$, or equivalently, $G_{\rho}$ is unimodular. In this case
 \begin{itemize}
     \item $\P_{\rho}$ is the Minkowski space if $\rho$ is unipotent.
     \item $\P_\rho$ is a hyperbolic or an elliptic Cahen-Wallach space.
 \end{itemize}
 And besides the globally symmetric cases, $\P_{\rho}$ is locally symmetric exactly for the non-unimodular case for which $G_{\rho}$ is isomorphic to $\Aff\ltimes \R^2$, where $\Aff$ is a copy in $\O(1, 2)$ of the affine group $\Aff^+(\R)$, preserving a degenerate plane  $\R^2 \subset \R^3$.
  In this case, $\P_{\rho}$ is globally isometric to half Minkowski.

 \item  $\P_\rho$  is isometric to the Minkowski space exactly when $\rho$ is unipotent. In this case, The group $G_\rho$ is isomorphic to $N \ltimes \R^3 \subset \O(1, 2) \ltimes \R^3$ where $N$ is a unipotent one-parameter subgroup of $\O(1, 2)$.

 \item  There is only one flat non-complete case, isometric to half Minkowski, for which the group $G_\rho$ is isomorphic to $\Aff \ltimes \R^2 \subset \O(1, 2) \ltimes \R^3$.
 %, where $\Aff$ is a copy in $\O(1, 2)$ of the affine group, preserving a (degenerate) plane  $\R^2 \subset \R^3$. 
  This corresponds to any non-unimodular representation  $t \to \rho(t)\in \Aut(\Heis)$
 having a fixed vector in $\heis$.
 
 \item Except the Minkowski case, $\Isom_0( \P_\rho) = G_\rho$.   
 \item Two spaces $\P_\rho$ and $\P_{\rho^\prime}$ are isometric if and only if $G_\rho$ and $G_{\rho^\prime}$ are isomorphic.
 
  \item Besides the two flat cases, two spaces $\P_\rho$ and $\P_{\rho^\prime}$ are locally  isometric if and only if $G_\rho$ and $G_{\rho^\prime}$ are isomorphic.
  
  \end{enumerate}

  \end{theorem}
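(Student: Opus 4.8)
The plan is to dispatch the ``if'' direction at once and then reduce the ``only if'' direction to a comparison of \emph{local} Killing algebras. If $G_\rho \cong G_{\rho'}$, then by item (7) the spaces $\P_\rho$ and $\P_{\rho'}$ are already globally isometric, hence \emph{a fortiori} locally isometric, so nothing more is needed there. For the converse, the idea is that a local isometry between two analytic, locally homogeneous Lorentz manifolds pushes germs of Killing fields to germs of Killing fields, and since its inverse does the same it induces an isomorphism between the local Killing algebras $\kappa(\P_\rho)$ and $\kappa(\P_{\rho'})$ (Killing fields defined near a point). The whole argument then rests on identifying these local algebras with $\g_\rho = \Lie(G_\rho)$.

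The key claim I would establish is that \emph{for non-flat $\P_\rho$ one has $\kappa(\P_\rho) = \g_\rho$}. One inclusion is free: the $G_\rho$-action restricts to local Killing fields, the map $\g_\rho \to \kappa(\P_\rho)$ is injective because a Killing field vanishing on an open set vanishes identically (analyticity), and by item (6) the action is faithful (non-flatness excludes the Minkowski case, so $\Iso^0(\P_\rho)=G_\rho$); thus $\g_\rho \subseteq \kappa(\P_\rho)$ is a $4$-dimensional subalgebra. For the reverse inclusion I would bound $\dim\kappa(\P_\rho)$ from above. Evaluation $X \mapsto X(p)$ is onto $T_p\P_\rho$ already on $\g_\rho$ (the $G_\rho$-action is transitive), with kernel the local isotropy $\mathfrak{h}_p$; and $X \mapsto (\nabla X)_p$ embeds $\mathfrak{h}_p$ into $\so(1,2)$ (skew-symmetry of $\nabla X$, plus the fact that a Killing field is determined by its $1$-jet at a point) as linear maps annihilating $\mathrm{Ric}_p$. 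Since $\P_\rho$ is a plane wave, in Brinkmann form $\mathrm{Ric} = c\,\ell\otimes\ell$ for the parallel null covector $\ell$, with $c \neq 0$ exactly when $\P_\rho$ is non-flat; a short computation shows that the elements of $\so(1,2)$ killing $c\,\ell\otimes\ell$ are precisely those annihilating the null direction $\ell$, a one-dimensional family. Hence $\dim\mathfrak{h}_p \le 1$ and $\dim\kappa(\P_\rho) \le 3+1 = 4$, which together with the free inclusion forces $\kappa(\P_\rho) = \g_\rho$.

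Granting this for both spaces, a local isometry gives $\g_\rho \cong \g_{\rho'}$; and since $G_\rho$ and $G_{\rho'}$ are simply connected, isomorphic Lie algebras yield isomorphic groups, i.e.\ $G_\rho \cong G_{\rho'}$, as required.

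The main obstacle will be the upper bound $\dim\kappa(\P_\rho)\le 4$, that is, ruling out any local Killing field not coming from $G_\rho$; this is exactly where non-flatness $(\mathrm{Ric}\neq 0)$ is indispensable, and it is also what forces the two flat cases to be excluded from the statement. Indeed, when $\mathrm{Ric}=0$ the isotropy is all of $\so(1,2)$ and $\kappa$ becomes the full $6$-dimensional Poincaré algebra, strictly larger than $\g_\rho$; concretely, Minkowski with $G_\rho = N\ltimes\R^3$ and half-Minkowski with $G_\rho = \Aff\ltimes\R^2$ are locally isometric (both flat) yet have non-isomorphic groups, so ``only if'' genuinely fails for this pair and the exclusion is necessary.
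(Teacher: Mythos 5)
Your proposal addresses only item (8) of the theorem, taking items (1)--(7) as already established; the remaining seven items (existence and uniqueness of the invariant metric, the symmetric, locally symmetric and flat classifications, the computation of $\Iso^0$) are not touched, so as a proof of the full statement it is incomplete. For item (8) itself the argument is correct and follows essentially the same route as the paper's Lemma \ref{isomorphism}: in the non-flat case one identifies the full (local) Killing algebra with $\g_\rho$, so that a local isometry forces $\g_\rho\cong\g_{\rho'}$, and simple connectedness upgrades this to $G_\rho\cong G_{\rho'}$; the converse is the transport-plus-uniqueness argument (your appeal to item (7) is exactly the content of the second half of that lemma). The one place you genuinely diverge is the justification of the key bound $\dim\mathrm{Kil}(\P_\rho)\le 4$: you compute the local isotropy directly as the annihilator in $\so(1,2)$ of $\mathrm{Ric}_p=c\,\ell\otimes\ell$ with $c\neq 0$, which is indeed the one-dimensional unipotent subalgebra killing the null covector $\ell$, whereas the paper deduces the same bound from its general facts that a $2$-dimensional Lorentz isotropy in dimension $3$ forces constant curvature (Proposition \ref{isotropy}) and that constant curvature forces $\P_\rho$ to be flat (Lemma \ref{flat}). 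Your computation is sound and more self-contained at that step, at the price of importing the plane-wave normal form of the Ricci tensor (Theorem \ref{plane_waves} or the explicit Brinkmann coordinates); either way the conclusion, and the counterexample you give for the excluded flat pair (Minkowski vs.\ half-Minkowski, $N\ltimes\R^3$ vs.\ $\Aff\ltimes\R^2$), match the paper.
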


\begin{remark} Some of the results in Theorem \ref{Theorem 1.1} are known and we added them here for completeness.  In particular, flat non-complete homogeneous connected Lorentz manifolds are classified in \cite{DI}. They are quotients of  half Minkowski, whose boundary is a lightlike hyperplane, by discrete translation subgroups.\end{remark}

  \subsubsection{Plane waves}  \label{intro_plane_waves} Plane waves are special pp-waves which in turn are special Brinkmann spacetimes.  All these classes play a central role in General Relativity both from the mathematical and applied sides, especially since the discovery of gravitational waves.  
  
A plane wave spacetime is defined by having a null parallel vector field $V$ such that there are local \textit{Brinkmann coordinates} in which the metric has the form $$2 dv du + \sum_{}^{}H_{ij}(u)x^{i}x^{j}  du^2 +  \sum_{i= 1}^{n-2} (dx^i)^2$$ where $V$ corresponds to $\frac{\partial}{\partial v}$. They can also be defined by having \textit{Rosen} coordinates in which the metric has the form $2 dv du + g_u, $ where $g_u$ is a flat metric depending on $u$ i.e. of the form $g_u = g_{ij}(u) dx^i dx^j$.  In an intrinsic  way, a plane wave is characterized by having a null parallel vector field $V$ such that its orthogonal distribution $V^\perp$ has flat leaves and, in addition, the space is ``almost symmetric'' i.e. $\nabla_W R = 0$ for any $W$ tangent to $V^\perp$, where $R$ is the Riemannian tensor.

The following classification result is due to \cite{BO}  (at the Lie algebra level). We yield here a self-contained proof which we think is more limpid, especially regarding globality questions.

  \begin{theorem} \label{plane_waves}
 Let $(M,g)$ be a simply connected homogeneous Lorentzian three-dimensional manifold. Then $(M,g)$ is a plane wave if and only if it is globally isometric to some $\P_{\rho}$. If a homogeneous plane wave $M$ is not simply connected, then it is either flat or isometric to a cyclic central quotient of one of the Cahen-Wallach spaces.
 
\end{theorem}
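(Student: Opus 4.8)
\emph{Plan.} The strategy is to match the intrinsic characterisation of a $3$-dimensional plane wave (null parallel field, flat transverse leaves, $\nabla_W R=0$) with the algebraic data $(\rho,W)$ defining $\P_\rho$, in both directions, and then to run a centraliser argument for the non--simply connected case.

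\emph{Each $\P_\rho$ is a plane wave.} On any $\P_\rho$ carrying a $G_\rho$-invariant Lorentz metric I would first exhibit Brinkmann coordinates $2\,dv\,du+H(u)x^2\,du^2+dx^2$. The parallel null field is the image $V$ of the central generator $Z\in\z$ of $\heis$: the line $\R Z$ is normal in $G_\rho$ (both $\heis$ and the $\R$-factor normalise the centre), and since $W$ is non-central $Z\notin\i$, so $Z$ descends to a nowhere-vanishing field. That $V$ is null and parallel, that the leaves of $V^\perp$ are flat, and that $\nabla_W R=0$ for $W\perp V$ are then read off directly from the Brinkmann form; hence each $\P_\rho$ is a plane wave.

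\emph{A homogeneous plane wave carries $\g_\rho=\R\ltimes\heis$.} For the converse, let $(M,g)$ be simply connected, homogeneous, and a plane wave, with $V=\partial_v$. The Killing equation for transverse fields $\phi(u)\partial_x-\phi'(u)x\,\partial_v$ reduces to the Jacobi-type equation $\phi''=H(u)\phi$; its two-dimensional solution space yields Killing fields $P,Q$ with $[P,Q]=w\,V$, where $w$ is the constant, nonzero Wronskian, and $[V,P]=[V,Q]=0$. Thus $\langle V,P,Q\rangle\cong\heis$ always sits inside the Killing algebra, and its orbits are the null hypersurfaces $\{u=\mathrm{const}\}$. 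As $M$ is $3$-dimensional and homogeneous, some Killing field moves transversally to these, so after rescaling there is $T=\partial_u+(\text{linear transverse part})$; then $\mathrm{ad}_T$ is a derivation of $\heis$ and $\g_\rho:=\R T\oplus\heis\cong\mathrm{Lie}(\R\ltimes_\rho\Heis)$ with $\rho(t)=\exp(t\,\mathrm{ad}_T)$, so the simply connected group $G_\rho$ acts isometrically and transitively on $M$.

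\emph{Identifying $M$ with $\P_\rho$.} The isotropy $\i\subset\g_\rho$ is $1$-dimensional and contains neither $T$ nor $V$ (both are non-vanishing at the base point, $T$ having nonzero $\partial_u$-part, $V=\partial_v$ never zero); the same $\partial_u$-argument forces $\i\subset\heis$, and since $V=Z$ is non-vanishing $\i\neq\z$, so $\i=\langle W\rangle$ with $W$ non-central. Hence $M=G_\rho/I=\P_\rho$ as homogeneous spaces, and by uniqueness of the invariant metric up to automorphisms fixing $W$ and scaling (Theorem \ref{Theorem 1.1}(1)) the metrics agree; as $G_\rho$ is simply connected and $I$ connected, $\P_\rho$ is simply connected and the isometry is global.

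\emph{The non--simply connected case.} The universal cover is again a simply connected homogeneous plane wave, hence $\tilde M=\P_\rho$ by the first part, and $M=\P_\rho/\Gamma$ with $\Gamma=\pi_1(M)$ a nontrivial discrete group of isometries. Homogeneity of $M$ produces a connected transitive $\tilde L\subseteq\Iso^0(\P_\rho)$ centralising $\Gamma$. Two features constrain $\Gamma$: first, an isometry scales the essentially unique parallel null field, defining a character $c:\Iso(\P_\rho)\to\R^{\ast}$ with $c(\exp tT)=e^{\lambda t}$, $\lambda\neq0$ exactly in the non-unimodular case, and since $V$ must descend to $M$ we get $\Gamma\subseteq\ker c$; second, $\Gamma$ centralises $\tilde L$. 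Using $\Iso^0(\P_\rho)=G_\rho$ (Theorem \ref{Theorem 1.1}(6)) and, in the elliptic similarity case, the absence of a proper transitive subgroup (Theorem \ref{Theorem 1.1}(2)) forcing $\tilde L=G_\rho$, a direct inspection of the centre of $\R\ltimes_\rho\Heis$ shows $\ker c\cap Z_{\Iso(\P_\rho)}(\tilde L)$ is trivial unless $\rho$ fixes $\ZZ$, i.e. unless $G_\rho$ is unimodular and $\P_\rho$ symmetric (Theorem \ref{Theorem 1.1}(3)). Thus either $\P_\rho$ is flat (Minkowski), where we record only flatness, or it is Cahen-Wallach, where the centraliser is $Z(G_\rho)\cong\R$ and the discrete $\Gamma$ is infinite cyclic and central, exhibiting $M$ as a cyclic central quotient of a Cahen-Wallach space.

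\emph{Main obstacle.} The delicate point is the converse: extracting the semidirect structure $\R\ltimes\Heis$ intrinsically from the Killing algebra and pinning the isotropy to a non-central line of $\heis$. In the non--simply connected case the subtlety is to exclude transitive \emph{proper} subgroups and isometries outside $\Iso^0$ in the non-elliptic non-symmetric types, where Theorem \ref{Theorem 1.1}(2) does not directly apply and one must verify by the centraliser-plus-character computation that no infinite discrete $\Gamma$ survives.
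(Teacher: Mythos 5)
Your overall strategy matches the paper's (Heisenberg algebra inside the Killing algebra, extension by a transverse element $T$, identification with $\P_\rho$, deck group forced to be central), but two steps that you assert are precisely the ones the paper has to work for, and as written they are gaps. First, the claim that $\ad_T$ is a derivation of $\heis=\langle V,P,Q\rangle$, i.e.\ that $[T,\heis]\subset\heis$. Your Jacobi-equation construction shows that $\heis$ \emph{sits inside} the Killing algebra, but not that it is an ideal, nor even that it absorbs the bracket with your particular $T$: a priori $[T,P]$ is only a Killing field tangent to the $V^\perp$-foliation, and you need to know that \emph{every} such Killing field lies in $\langle V,P,Q\rangle$. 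The paper proves exactly this lemma by examining the degenerate affine structure of the $V^\perp$-leaves (automorphisms $(z,y)\mapsto(\lambda z+ty+a,y+b)$ have transverse distortion $\lambda^{-1}$, and an isometry preserving each leaf forces $\lambda=1$, hence membership in $\Heis$); from this $\heis$ is an ideal and the extension $\R T\oplus\heis$ makes sense. Without this lemma your $\g_\rho$ is not known to close up as a Lie algebra of the stated form. Second, the jump from ``$\g_\rho$ acts with an open orbit'' to ``$M=G_\rho/I$ globally''. Killing fields on a possibly incomplete $M$ need not integrate to a global $G_\rho$-action, and an open orbit of a subgroup need not be everything (half-Minkowski inside Minkowski is the standing counterexample in this very paper). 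The paper closes this with the developing map $d:M\to\P_\rho$, equivariant under $h:G\to G_\rho$, observing that $h$ is an isomorphism because the two spaces have the same $4$-dimensional Killing algebra and $G_\rho$ is simply connected, whence $d$ is a bijection of homogeneous spaces; the flat case (where the Killing algebra is larger) is split off and handled separately, which your argument also omits.

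On the other points you genuinely diverge from the paper, mostly harmlessly. For the forward direction the paper does \emph{not} exhibit Brinkmann coordinates at this stage; it observes that $Z^\perp$ is spanned by the two commuting Killing fields $Y,Z$ and invokes the criterion of Globke--Leistner, plus a short Koszul computation showing $Z$ is parallel. Your route through explicit Brinkmann coordinates is viable but defers the real work to a construction the paper only carries out later (and only for non-unimodular $\rho$). In the non-simply connected case your character $c$ measuring the scaling of the parallel null field is a nice supplementary constraint, but the load-bearing step is the one you yourself flag as unresolved: showing that the centraliser in $\Iso(\P_\rho)$ of a transitive connected subgroup is trivial outside the flat and Cahen--Wallach cases (where proper $3$-dimensional transitive subgroups $L\subsetneq G_\rho$ do exist, so Theorem~\ref{Theorem 1.1}(2) does not apply). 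The paper reduces this to the triviality of the centre of $G_\rho$ in the non-unimodular case; some version of that computation must appear for the proof to be complete.
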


In fact, it is the center of the Heisenberg algebra which plays the role of the null parallel vector field. Observe, however, that the Heisenberg center is central in $G_\rho$ exactly when $\P_\rho$ is a symmetric space (see Lemma \ref{symmetric}).

\subsubsection{Literature}
There are lots of works on homogeneous plane waves. The most important is surely  \cite{BO} by M. Blau and M. O'Loughlin, in 2003. It has very interesting mathematical content although it is intended for a mathematical physics audience. A relatively recent work in a pure mathematical context is \cite{GL}, where the authors W. Globke and  T. Leistner, aimed to know when a homogeneous pp-wave is in fact a plane wave. 

We are inspired by these two references, and especially by \cite{BO} with which we have natural overlaps at many places. Our approach is however different. The authors in \cite{BO} start with a plane wave in local coordinates (of Brinkmann or Rosen type) and ask when it is (infinitesimally) homogeneous. By a direct analysis of the Killing equations, they determine the isometry algebra as generated by Killing fields satisfying some bracket relations, and prove that it contains a Heisenberg algebra.
It appears then that the Killing algebra contains an $\R$-extension of the Heisenberg algebra (although the action of the one-parameter group of (infinitesimal) automorphisms on the Heisenberg algebra is not visible). 

To exhibit simply connected homogeneous plane waves explicitly as Lorentz homogeneous spaces, which is the aim of Theorem \ref{plane_waves}, one has to determine the isometry group of such structures. This could be achieved by identifying the extended Heisenberg algebra in \cite{BO}, and studying the completeness of the Killing fields (which in general is difficult to check). 
However, the approach we adopt here to the classification result is different and (mostly) Lie theoretical, dealing directly with groups.  Our starting point is to consider a Lorentz homogeneous space of dimension $3$, assume (after reduction) that it has a $4$-dimensional isometry group, observe that (generally) this $4$-dimensional group is a Heisenberg extension,  and then bring out its plane wave structure. This approach has the advantage of providing the homogeneous space directly, overcoming the (highly non-trivial) analysis of the Killing equation on the one hand, and the completeness problem of the Killing fields on the other hand (to see the difference in a toy example, observe that the Euclidean plane and any of its open subsets share the same Killing algebra, but this open set has generically a trivial isometry group).

We hope that our present article plays a role of complement and companion of Blau and  O'Loughlin's paper in this subject of homogeneous plane waves.  Of course, contrary to \cite{BO},  we do not deal here with physical aspects like supergravity and quantization... and conversely, \cite{BO}  does not cover geometric (and global) aspects like completeness, extendibility and existence of compact quotients.

\subsubsection{Some terminology}\label{terminology} It appears worthwhile for us to give names to the plane waves classes in the following way \begin{itemize}
 \item[$\circ$] The flat complete case corresponding to $\rho$ unipotent.
 
 \item[$\circ$]  The Cahen-Wallach, hyperbolic or elliptic, case corresponding to $\rho$ unimodular semi-simple on $\heis/\ZZ$.
 
\item[$\circ$] In the remaining non-unimodular cases, let us call $\P_\rho$ a \textit{hyperbolic} (resp. \textit{elliptic}, resp.  \textit{parabolic}) non-unimodular plane wave, according to the action of $G_{\rho}$ on $\heis / \ZZ$ being diagonalizable with real eigenvalues (resp. non-real eigenvalues, resp. real non-diagonalizable).
   \end{itemize}

\subsubsection{Global coordinates system}\label{coor} It is known that a Cahen-Wallach symmetric space has a global chart where the metric has the form $g= 2 dudv\pm x^2du^2+dx^2 $ 
 (the minus sign corresponds to the elliptic case and the plus to the hyperbolic one, see for instance \cite{KO}). The global coordinate system, in the next theorem, already appears in \cite{BO}.
 
\begin{theorem}  \label{coordinates} Assume $\rho$ is non-unimodular. Then $\P_\rho$ has a global coordinate system $(u, v, x) \in \R^+ \times \R \times \R$ where the metric has the form
  $$
g= 2 dudv+\frac{b(\rho)x^2}{u^2}du^2+dx^2.
$$
    The invariant $b(\rho)$ is defined by $\rho(t) = e^{t A}$ where  $ A= \left( \begin{array}{ccc}
1 & 0 & 0 \\ 
0& 0 & 1\\ 
0 & b & 1
\end{array} \right)$ acting on the Lie algebra $\heis$ (endowed with the basis $\left\{Z,X,Y\right\}$, with $[X, Y] = Z$). Moreover, $G_{\rho}$ and $G_{\rho^{'}}$ are isomorphic if and only if $b(\rho)=b(\rho^{'})$.
  
  \end{theorem}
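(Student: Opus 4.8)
The plan is to build the chart directly from the group structure and then pin down both the profile and the range of $u$. Write $\g = \R T \oplus \heis$ with $[T,\cdot] = A(\cdot)$, and $\P_\rho = G_\rho/I$ with $I = \exp(\R W)$. By Theorem \ref{plane_waves}, $\P_\rho$ is a plane wave whose null parallel field $V$ is the image of the Heisenberg center $Z$; hence around any point there are Brinkmann coordinates $(u,v,x)$ with $g = 2\,du\,dv + H(u)x^2\,du^2 + dx^2$ and $V = \partial_v$, for some function $H$. It then remains to (i) identify $H$, (ii) show the chart is global with $u \in \R^+$, and (iii) extract $b(\rho)$ and the isomorphism criterion.

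The key is the isometric $\R$-action $\phi_s$ generated by $T$. A derivation of $\heis$ acts on the center by the scalar $\tr\bar A$, where $\bar A$ is its induced action on $\heis/\ZZ$; here this eigenvalue is nonzero by non-unimodularity and is normalized to $1$ by the given form of $A$, so $\Ad(\exp sT)$ scales $Z$ by $e^s$ and $\phi_s$ rescales the null parallel field $V = \partial_v$. An isometry of the plane wave rescaling $\partial_v$ must act in the wave variable by a one-parameter group $u \mapsto \Phi_s(u)$ with a compensating rescaling of $v$, and—since the only source of $dx^2$ is the transverse term—by $x \mapsto \pm x$. After normalizing the affine parameter this is $u \mapsto e^s u$, $v \mapsto e^{-s}v$, $x \mapsto x$. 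Invariance of $g$ then forces $H(e^s u)e^{2s} = H(u)$ for all $s$, i.e. $H(u) = c/u^2$ for a constant $c$; and since the orbit of any $u_0 > 0$ under $u \mapsto e^s u$ is all of $\R^+$, homogeneity makes the chart global with $u \in \R^+$.

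Next I would identify the constant. With $u$ affine, the transverse geodesics of $2\,du\,dv + (c/u^2)x^2\,du^2 + dx^2$ satisfy the Euler equation $x'' = (c/u^2)x$, whose characteristic exponents solve $\alpha^2 - \alpha - c = 0$. On the other hand the boost $\phi_s$ transports transverse directions by $e^{s\bar A} = u^{\bar A}$ (recall $u = e^s$), so these exponents are the eigenvalues of $\bar A = \begin{pmatrix} 0 & 1 \\ b & 1 \end{pmatrix}$, i.e. the roots of $\lambda^2 - \lambda - b = 0$. Comparing the two characteristic equations gives $c = b = b(\rho)$; equivalently, one computes the single nontrivial curvature component of the Brinkmann metric and matches it to $\det(-\bar A)$. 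For the isomorphism statement, an isomorphism $\g_\rho \to \g_{\rho'}$ preserves the Heisenberg nilradical, hence is given by some $\psi \in \Aut(\Heis)$ together with $T \mapsto cT' + h$; modulo inner derivations (which land in $\ZZ$ and so act trivially on $\heis/\ZZ$) this reads $\bar\psi\,\bar A\,\bar\psi^{-1} = c\,\bar A'$. Matching the center eigenvalue forces $c = 1$, so $\bar A$ and $\bar A'$ are conjugate; excluding the homothetic case, which by Theorem \ref{Theorem 1.1} carries no invariant metric, $\bar A$ is non-scalar and hence conjugate to its companion matrix $\begin{pmatrix} 0 & 1 \\ b & 1 \end{pmatrix}$, so conjugacy is equivalent to equality of characteristic polynomials, i.e. to $b(\rho) = b(\rho')$, and the converse lift is immediate.

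The main obstacle is the identification in the third paragraph: matching the profile constant to the algebraic invariant \emph{exactly}, rather than merely up to the scaling freedom of the metric in Theorem \ref{Theorem 1.1}(1), which requires fixing the three normalizations (center eigenvalue $1$, affine $u$, unit transverse scale) consistently with one another. The Euler-equation/characteristic-polynomial comparison is precisely what keeps this bookkeeping manageable, since it reduces the matching to the coincidence of two monic quadratics.
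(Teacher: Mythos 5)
Your route is genuinely different from the paper's and most of it is sound. The paper works in the opposite direction: it starts from the explicit metric $g_b$ on $\{u>0\}$, proves (via the Rosen--Brinkmann correspondence and the fact that horizontal geodesics are straight lines in the $(v,x)$-coordinates) that the Heisenberg group acts \emph{globally} on it, and then identifies its Killing algebra with $\g_\rho$ by quoting the computation in \cite{Blau}; the isomorphism criterion is obtained from the intrinsic invariant $\det(\overline{\ad}_{[T]})=-b$, where $[T]$ is the unique class in $\g/[\g,\g]$ acting as the identity on the center of $[\g,\g]$. Your derivation of the profile $H(u)=c/u^2$ from the functional equation $H(e^su)e^{2s}=H(u)$, and the identification $c=b$ by matching the indicial equation $\alpha^2-\alpha-c=0$ of the Euler equation with the characteristic polynomial $\lambda^2-\lambda-b$ of $\bar A$, is a clean and self-contained replacement for the citation of Blau's formula; your argument for the isomorphism criterion (conjugacy of non-scalar $2\times2$ matrices with fixed trace is detected by the determinant) is essentially equivalent to the paper's.

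The genuine gap is in the sentence ``homogeneity makes the chart global with $u\in\R^+$.'' The scaling flow only controls the $u$-direction; it says nothing about whether each $u$-level is an \emph{entire} affine plane $\R^2$ in the $(v,x)$-coordinates, nor about why the a priori local Brinkmann charts on nearby levels patch into a single global chart. Theorem \ref{plane_waves} gives you Brinkmann coordinates only on a neighbourhood, and the Heisenberg algebra acts there only infinitesimally; the nontrivial point -- which the paper isolates as a separate Fact -- is that the $\Heis$-action integrates to a global action because the leaves of $\frac{\partial}{\partial v}^\perp$ are complete (their geodesics are straight lines in the Brinkmann $(v,x)$-plane), after which a developing-map argument as in \S\ref{planewaves 8} identifies $\P_\rho$ with the half-space model. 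Without some version of this step you have only shown that $\P_\rho$ is \emph{locally} isometric to $g_{b}$ on $\R^+\times\R^2$, not that the coordinate system is global. (A smaller point of the same nature: the normal form $(u,v,x)\mapsto(e^su,e^{-s}v,\pm x)$ for the flow of $T$ should be justified modulo composition with elements of $\Heis$ and a choice of affine parameter for $u$; as stated it silently discards a possible $u$-dependent transverse translation $x\mapsto \pm x+c(u)$, which is harmless but needs a word.)
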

  
Observe that one can modify any derivation  $A $  of    $\heis$ with $A(Z) \neq 0$ by $\alpha A + \ad_u$, $u \in \heis$ and gets a matrix of the form above (the two derivations generate isomorphic semi-direct products).\begin{itemize}
      \item The case $b(\rho) = 0$ corresponds to $G_{\rho} = \Aff \ltimes \R^2$, where $\Aff \subset \SL(2, \R)$ is the affine group acting by its usual representation in $\R^2$. The space $\P_{\rho}$ in this case is isometric to the half Minkowski space $\left\{(u, v, x), u >0\right\}$ endowed with the metric $2 du dv + dx^2$.
      
      \item For $b < -\frac{1}{4}$, the space $\P_\rho$ is elliptic i.e. the $A_{\rho}$-action on $\heis/\ZZ$ has non-real eigenvalues and for $b>-1/4$ the space $\P_\rho$ is hyperbolic i.e. $A_{\rho}$ has two different real eigenvalues.
      \item For $b = -\frac{1}{4}$, $\P_\rho$ is the parabolic non-unimodular plane wave i.e. $A_{\rho}$ is conjugate to  $$\left( \begin{array}{ccc}
1 & 0 & 0 \\ 
0& 1/2 & 1\\ 
0 & 0 & 1/2
\end{array} \right).$$  \end{itemize}

  \subsubsection{Compact models of homogeneous plane waves}

  \begin{theorem} \label{no_compact}
There is exactly one non-flat plane wave that admits a compact model. This is given in Brinkmann coordinates defined on the half space $\left\{ u >0\right\}$ as
$$g= 2 dudv+\frac{2 x^2}{u^2}du^2+dx^2.$$  
In Rosen coordinates, the metric takes the form $g = 2 du dv + u^{-2} dx^2$. 

Its isometry group contains a copy of the group $\SOL = \SO_0(1, 1) \ltimes \R^2$ acting properly.  Any lattice $\Gamma \subset \SOL$ gives a compact quotient $\Gamma \backslash \P$. All compact models arise in this way.  

 \end{theorem}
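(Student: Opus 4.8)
The plan is to convert ``admits a compact model'' into a purely group-theoretic condition on $\rho$, and then to decide that condition from the eigenvalues of $\ad_A$ on $\heis$. A compact model is a compact quotient $\Gamma\backslash\P_\rho$ with $\Gamma\subset\Iso(\P_\rho)$ discrete, torsion-free, acting properly discontinuously and cocompactly. Replacing $\Gamma$ by a finite-index subgroup and using Theorem \ref{Theorem 1.1}(6), I would assume $\Gamma\subset\Iso^0(\P_\rho)=G_\rho$. As $G_\rho=\R\ltimes\Heis$ is simply connected and solvable it has no nontrivial compact subgroup, so $\Gamma$ is torsion-free polycyclic; its syndetic hull is then a connected solvable subgroup $S\subseteq G_\rho$ in which $\Gamma$ sits cocompactly. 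Since $\Gamma$ is a cocompact lattice, $S$ is unimodular, and since $\Gamma\backslash\P_\rho$ is a closed aspherical $3$-manifold we have $\dim S=\mathrm{cd}(\Gamma)=3$; the absence of compact subgroups in $G_\rho$ forces the isotropy of $S$ to be trivial, so $S$ acts simply transitively. Conversely any lattice in such an $S$ produces a compact model. Thus the whole problem reduces to: for which $\rho$ does $\Iso(\P_\rho)$ contain a $3$-dimensional unimodular subgroup acting simply transitively?

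For the existence statement I would treat $b(\rho)=2$ by hand. The coordinate change turning the Rosen form $2\,du\,dv+u^{-2}dx^2$ into $2\,du\,dv+2u^{-2}x^2\,du^2+dx^2$ identifies the displayed metric with the non-unimodular plane wave of invariant $b(\rho)=2$ (the Rosen profile $f=u^{-1}$ solves $f''=(b/u^2)f$ for $b=2$). On $\{u>0\}$ the maps $\phi_\lambda\colon(u,v,x)\mapsto(\lambda u,\lambda^{-1}v,\lambda x)$ and the translations $T_{a,b}\colon(u,v,x)\mapsto(u,v+a,x+b)$ are isometries, and $\phi_\lambda T_{a,b}\phi_\lambda^{-1}=T_{\lambda^{-1}a,\lambda b}$ shows they generate a copy of $\SOL=\SO(1,1)\ltimes\R^2$ acting simply transitively; being a simply transitive isometry group it acts freely and properly. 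Any lattice $\Gamma\subset\SOL$ (e.g.\ the fundamental group of a hyperbolic torus bundle over the circle) then yields a compact model $\Gamma\backslash\SOL=\Gamma\backslash\P$.

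The classification of the remaining cases is the delicate half, and I would run it through the reduction above. A simply transitive subalgebra is a $3$-dimensional $\s\subset\g_\rho$ complementary to $\R W$; writing $\s=\R(A+n_0)\oplus\m$ with $\m=\s\cap\heis$ two-dimensional, $\m$ must be an $\ad_{A+n_0}$-invariant abelian subalgebra. Because the Heisenberg bracket is the (nondegenerate) symplectic form on $\heis/\ZZ$, any two lifts of independent vectors of $\heis/\ZZ$ bracket to a nonzero multiple of $Z$; hence an abelian $\m$ must contain the centre $Z$. Consequently $\ad_{A+n_0}|_{\m}$ has eigenvalues $\kappa$ (its value on $Z$) and one eigenvalue $\mu$ of $\ad_A$ on $\heis/\ZZ$, where $\mu\in\{\mu_+,\mu_-\}$, $\mu_\pm=\tfrac{1\pm\sqrt{1+4b}}{2}$, and where the derivation identity forces $\kappa=\mu_++\mu_-$. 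The group $S$ is unimodular exactly when $\mathrm{tr}(\ad_{A+n_0}|_{\s})=\kappa+\mu=0$. For the symmetric (Cahen--Wallach) cases $\rho$ fixes the centre, so $\kappa=0$ and one would need $\mu=0$, impossible since $\mu=\pm\nu\neq0$; these admit no compact model. In the normalisation of Theorem \ref{coordinates} the non-unimodular cases have $\kappa=1$, so unimodularity requires $\mu=-1$, i.e.\ $\mu_-=-1$, i.e.\ $b(\rho)=2$; the parabolic value $b=-1/4$ (eigenvalue $\tfrac12$ doubled) and every other hyperbolic $b$ are thereby excluded. Finally the elliptic plane waves are disposed of directly by Theorem \ref{Theorem 1.1}(2): there $\rho$ acts by similarities with non-real eigenvalues, so no $3$-dimensional group acts with open orbit at all, contradicting the reduction. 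At $b=2$ the resulting $\s=\R A\oplus\langle Z,\;X-Y\rangle$ with $\ad_A$ acting by $\mathrm{diag}(1,-1)$ is precisely the above $\SOL$, and it is the unique unimodular simply transitive subgroup (the nilradical $\heis$ is not transverse to $\R W$), so every compact model is a quotient $\Gamma\backslash\SOL$.

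I expect the main obstacle to be making the first paragraph rigorous: justifying the syndetic-hull reduction together with the properness and dimension bookkeeping, so that ``compact model'' and ``unimodular simply transitive isometry subgroup'' really are equivalent. The second genuinely subtle point, and the one where \cite{DZ} slips, is the elliptic family: it must be excluded not by a curvature or determinant computation but by the structural fact (Theorem \ref{Theorem 1.1}(2)) that no three-dimensional group acts on it with an open orbit.
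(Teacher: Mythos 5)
Your eigenvalue bookkeeping in the last part is correct and does single out $b(\rho)=2$ (equivalently the derivation with spectrum $(1,2,-1)$ on $\{Z,X,Y\}$, whose unimodular transverse subalgebra $\R T\oplus\R Z\oplus\R(\text{eigenvector})$ is $\mathfrak{sol}$), and your explicit $\SOL$-action for the existence half matches the paper's. But the reduction in your first paragraph has two genuine gaps, and they are precisely where the paper spends most of its effort.

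First, you \emph{define} a compact model to be a quotient $\Gamma\backslash\P_\rho$ by a properly discontinuous group of isometries. The theorem's final claim ``all compact models arise in this way'' is about arbitrary closed manifolds locally isometric to $\P_\rho$, i.e.\ compact $(G_\rho,\P_\rho)$-manifolds; that such a manifold is a quotient of $\P_\rho$ is equivalent to completeness of the geometric structure (the developing map $D:\widetilde M\to\P_\rho$ being a diffeomorphism), which is far from automatic for an incomplete, non-extendable Lorentz space such as $\P_\rho$. The paper proves this as a separate Proposition in \S\ref{s12}, by successively straightening $D$ along the parallel direction $\mathcal Z$, then along the $\Heis$-leaves, then transversally. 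Your proposal silently assumes it.

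Second, the syndetic hull step is not a technicality to be ``made rigorous'' later: it is exactly the point at which \cite{DZ} went wrong. The Fried--Goldman/Witte syndetic hull of a discrete polycyclic $\Gamma$ lives in the real algebraic hull of $\Gamma$, not in $G_\rho$; when $\rho$ is elliptic the one-parameter group $t\mapsto\rho(t)$ is not algebraic (its Zariski closure contains all of $\R^*\times\SO(2)$ acting on $\heis/\ZZ$), so there is no guarantee of a connected $3$-dimensional $S\subseteq G_\rho$ containing $\Gamma$ cocompactly --- compare $\widetilde{\sf{Euc}}(\R^2)$, where a cyclic discrete subgroup generated by a full twist composed with a translation has no syndetic hull in the group itself. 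Your plan to ``dispose of the elliptic family by Theorem \ref{Theorem 1.1}(2)'' is applied downstream of this unjustified reduction, so it excludes nothing: the dangerous scenario is a discrete $\Gamma$ acting properly cocompactly \emph{without} any connected hull, and Theorem \ref{Theorem 1.1}(2) says nothing about that. The paper avoids hulls entirely and argues directly on $\Gamma$: a contraction argument (conjugation by an element outside $\Heis$) shows $\Gamma\cap\Heis$ is trivial or central, hence $\Gamma$ is abelian; then one projects to $G_\rho/\ZZ\cong\R\ltimes\R^2$ or $\widetilde{\sf{Euc}}(\R^2)$ and uses fixed-point arguments to trap $\Gamma$ in a subgroup of dimension $\le 3$ that is either too small or non-unimodular. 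If you want to keep your unimodularity criterion as the organizing principle, you must either prove hull existence case by case for these non-algebraic groups or replace that step by the paper's direct analysis of $\Gamma$.
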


 \begin{remark}  \label{remark_symmetric} Non-existence of compact quotients of $3$-dimensional non-flat Cahen-Wallach spaces is already known in \cite{KO} and \cite{DZ} (See Remarks \ref{remark completeness} and \ref{remark compact} for further discussion).\end{remark}

\subsubsection{Completeness and extendibility}

\begin{theorem} \label{Causality} The non-unimodular homogeneous plane waves are not complete. More precisely \begin{itemize}

 \item[a)] Any null geodesic, which is not an orbit of the parallel field $\frac{\partial}{\partial v}$, is incomplete. 
 \item[b)] Any timelike geodesic is incomplete. 
 \end{itemize}   
 In addition, a homogeneous plane wave $\P_{\rho}$ is non-extendable.  
 \end{theorem}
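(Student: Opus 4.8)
The plan is to read every statement off the global chart of Theorem~\ref{coordinates}, in which the non-unimodular $\P_\rho$ is the half-space $\{u>0\}\cong\R^+\times\R\times\R$ with
\[
g=2\,du\,dv+\frac{b(\rho)\,x^2}{u^2}\,du^2+dx^2 .
\]
Since $g$ is independent of $v$, the Euler--Lagrange equation for $v$ gives $\ddot u=0$, so $u(s)=u_0+cs$ is affine in the affine parameter, with $c=\dot u$ constant; the remaining equations are $\ddot x=\frac{b(\rho)}{u^2}\,x\,\dot u^2$ together with the first integral $2\dot u\dot v+\frac{b(\rho)x^2}{u^2}\dot u^2+\dot x^2=\varepsilon$, where $\varepsilon=g(\dot\gamma,\dot\gamma)$ is constant. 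If $c\neq0$ the line $u=u_0+cs$ hits the edge $u=0$ at the finite value $s=-u_0/c$; as the linear $x$-equation has bounded coefficients on any interval where $u$ stays away from $0$, neither $x$ nor $\dot x$ can blow up earlier, so the geodesic leaves every compact subset of $\P_\rho$ in finite affine time and is incomplete. It then remains to force $c\neq0$ in the two relevant families. A null geodesic with $c=0$ has $u$ constant, hence $\ddot x=0$ and $\dot x^2=\varepsilon=0$, so it is an orbit of $\partial_v$; this proves (a). A timelike geodesic with $c=0$ would satisfy $\dot x^2=\varepsilon<0$, which is impossible, so every timelike geodesic has $c\neq0$; this proves (b). (Geodesics with $c=0$, including the $\partial_v$-orbits, are in fact complete.)

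For non-extendability I would split into the complete and the incomplete regimes. By Theorem~\ref{Theorem 1.1}(3) the symmetric members ($\rho$ unimodular: Minkowski and the two Cahen--Wallach spaces) are geodesically complete, and a connected geodesically complete pseudo-Riemannian manifold is inextendible: any proper isometric extension $\tilde M\supsetneq\P_\rho$ would have a boundary point $p$, and a geodesic of $\tilde M$ running from a nearby interior point into $p$ inside a convex neighbourhood would restrict to an incomplete geodesic of $\P_\rho$, a contradiction. The genuinely curved incomplete cases $b(\rho)\neq0$ are the heart of the matter and demand a different argument.

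The difficulty is that plane waves are VSI, i.e.\ every scalar polynomial curvature invariant vanishes, so the edge $u=0$ is invisible to scalars; I would instead exhibit a parallelly propagated curvature blow-up. The only independent curvature component is $R(\partial_x,\partial_u,\partial_u,\partial_x)=-\tfrac12\partial_x^2H=-b(\rho)/u^2$, while $\partial_v$ is parallel with $R(\cdot,\cdot)\partial_v=0$. Along a geodesic with $c\neq0$ the field $E=\partial_x+f\partial_v$, where $f'=-c\,b(\rho)x/u^2$, is parallel and unit; completing $\{\dot\gamma,E\}$ to a parallel frame and using that any curvature slot filled by $\partial_v$, or by a repeated $\partial_x$, vanishes, the frame component is
\[
R(E,\dot\gamma,\dot\gamma,E)=c^2R(\partial_x,\partial_u,\partial_u,\partial_x)=-\frac{b(\rho)\,c^2}{u^2}\xrightarrow[u\to0]{}\infty .
\]
If $\P_\rho$ had a connected extension $\tilde M$, the convex-neighbourhood argument would again produce a geodesic of $\P_\rho$ limiting, as $u\to0$, onto an interior point $p$ of $\tilde M$; there parallel transport and the curvature of $\tilde M$ are continuous, so this very frame component would stay bounded, contradicting the displayed divergence. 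Hence $\P_\rho$ is non-extendable as soon as $b(\rho)\neq0$. The sole exception is the flat case $b(\rho)=0$ (half Minkowski, Theorem~\ref{Theorem 1.1}(5)), an honest open subset of Minkowski space, for which this component vanishes and no obstruction survives.

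I expect the main obstacle to be precisely this last step: since all scalar invariants vanish one cannot certify $u=0$ as a genuine edge by any curvature scalar, and the argument must go through the parallelly propagated frame together with the careful justification that an isometric extension forces the frame components of curvature to remain bounded along the limiting geodesic.
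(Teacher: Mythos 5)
Your proof is correct. The completeness half coincides with the paper's argument in \S\ref{s10}: both reduce to the observation that $u$ is affine in the affine parameter, so that any geodesic with $\dot u\neq 0$ reaches the edge $u=0$ in finite time while the linear $x$-equation forbids earlier breakdown, and both dispose of the $\dot u=0$ cases exactly as you do (null with $\dot u=0$ forces an orbit of $\frac{\partial}{\partial v}$, timelike with $\dot u=0$ is impossible).

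For non-extendability you take a genuinely different route. The paper offers three arguments: an analytic-category argument that extends the Killing algebra to a boundary $\Heis$-leaf and derives a contradiction with the automorphism group $\Aff\ltimes\R^2$ of that leaf; a citation of \cite{GGN} producing a parallel $2$-plane of divergent sectional curvature, which the paper uses only to exclude embeddings into \emph{complete} $C^2$ spacetimes; and a sketched ``synthetic'' argument via Lipschitz regularity of lightlike geodesic foliations, aimed at low-regularity extensions. You instead make the curvature blow-up carry the whole load: the field $E=\partial_x+f\partial_v$ with $f'=-cb(\rho)x/u^2$ is indeed parallel and unit along a geodesic with $\dot u=c$, the component $R(E,\dot\gamma,\dot\gamma,E)=-b(\rho)c^2/u^2$ is correct (it diverges in absolute value; the sign depends on $b(\rho)$), every inextendible finite-parameter geodesic of $\P_\rho$ necessarily has $c\neq0$ and runs into $u=0$ by your own ODE analysis, and the convex-neighbourhood argument then rules out \emph{any} connected $C^2$ extension, complete or not. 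This yields a self-contained proof in the $C^2$ category that is stronger than the paper's completeness-assuming citation and more elementary than its synthetic sketch, at the price of saying nothing below $C^2$ regularity --- which is precisely the regime the paper's foliation argument is designed to reach (the paper explicitly flags that non-extendability depends on the regularity assumed of the ambient metric). Your separate treatment of the complete symmetric cases and your explicit exclusion of the flat half-Minkowski case $b(\rho)=0$ (which genuinely is extendable) are both correct and make precise a restriction the paper leaves implicit by fixing $b\neq0$ in \S\ref{s10}--\S\ref{s11}.
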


\subsection{Classification of homogeneous Lorentz 3-spaces} Coming back to  the issue in the 
classification of homogeneous Lorentz 3-manifolds, we have
\begin{theorem}  \label{classification} Any simply connected homogeneous Lorentz $3$-manifold is globally isometric to 
\begin{itemize}
     \item[$\bullet$] A Lorentz group i.e. a $3$-dimensional Lie group endowed with a left-invariant Lorentz metric, 
     \item[$\bullet$] Or, a globally symmetric space,
     \item[$\bullet$] Or, a non-unimodular elliptic plane wave i.e. $\P_{\rho}$ with $\rho$ acting on $\heis/ \ZZ$  by similarities with non-real eigenvalues. In this case, $\P_{\rho}$ is neither locally symmetric nor locally isometric to a Lorentz group.
\end{itemize}
\end{theorem}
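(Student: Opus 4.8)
The plan is to stratify according to $d := \dim \Iso^0(M,g)$, where $G := \Iso^0(M,g)$ acts transitively (for connected homogeneous $M$ the identity component is already transitive), so that $M = G/H$ with $H$ the stabilizer of a point. Since an isometry fixing a point with trivial differential there is the identity, the isotropy representation embeds $H$ into $\O(1,2)$; hence $d = 3 + \dim H$ with $\dim H \in \{0,1,2,3\}$, and in particular $d \le 6$. I would first clear away the extreme values. If $d = 3$ then $H$ is discrete, and the homotopy exact sequence of $H \hookrightarrow G \to M$, together with $M$ simply connected and $G$ connected, gives $\pi_0(H) = 0$; a discrete connected group is trivial, so $G$ acts simply transitively and $(M,g)$ is a left-invariant Lorentz metric on $G$, i.e. a $3$-Lorentz group. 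If $d = 6$ the isotropy is all of $\SO(1,2)^0$, which acts transitively on each pseudo-sphere of $T_pM$; this forces constant sectional curvature, and a simply connected homogeneous space of constant curvature with full isotropy is the corresponding space form, hence globally symmetric.

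Next I would rule out $d = 5$. Every $2$-dimensional connected subgroup of $\O(1,2)$ stabilizes a null line, so its Lie algebra is the associated parabolic subalgebra $\mathfrak{b} \subset \so(1,2)$. Since the isotropy algebra must annihilate the curvature tensor $R$, it suffices to verify that the only $\mathfrak{b}$-invariant algebraic curvature tensors in dimension three are those of constant curvature; these are already $\so(1,2)$-invariant, which would force the isotropy to be $3$-dimensional, contradicting $\dim H = 2$. This is a short representation-theoretic computation on the space of curvature tensors.

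The heart of the argument is the case $d = 4$, $\dim H = 1$, with isotropy $\h = \R K$ for a nonzero $K \in \so(1,2)$ that is elliptic, hyperbolic, or parabolic up to conjugacy. The governing dichotomy is whether $\g = \Lie(G)$ admits a $3$-dimensional subalgebra $\s$ transverse to $\h$, i.e. $\g = \s \oplus \h$. If such an $\s$ exists, the connected subgroup $S$ it generates has a $3$-dimensional, hence open, orbit; this orbit is then all of $M$, and simple connectedness upgrades the open orbit to a simply transitive action of $S$, so $(M,g)$ is again a $3$-Lorentz group. If no transverse $\s$ exists, I would show that $K$ is necessarily elliptic and that $\g \cong \R \ltimes \heis$ is a Heisenberg extension, with the central direction of $\heis$ descending to a null parallel vector field $V$ on $M$; checking that $V^\perp$ has flat leaves and that $\nabla_W R = 0$ for $W$ tangent to $V^\perp$ identifies $(M,g)$ as a plane wave, whence Theorem \ref{plane_waves} gives $(M,g) \cong \P_\rho$. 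Finally, by Theorem \ref{Theorem 1.1}(2) the absence of a transverse transitive $3$-dimensional subgroup is exactly the condition that $\rho$ act on $\heis/\ZZ$ by similarities with non-real eigenvalues (the elliptic case); by Theorem \ref{Theorem 1.1}(3) such a $\P_\rho$ is symmetric precisely when $\rho$ is moreover unimodular (the elliptic Cahen--Wallach space, landing in the symmetric class), and otherwise it is the non-unimodular elliptic plane wave, which the same theorem shows is neither locally symmetric nor locally isometric to a Lorentz group. This yields the trichotomy.

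I expect the main obstacle to be precisely this last dichotomy inside $d = 4$: proving that the failure to find a transverse $3$-dimensional subalgebra is \emph{equivalent} to the elliptic Heisenberg-extension structure. Concretely one must run through the three conjugacy types of $K$, write out the brackets on a reductive complement $\m$ of $\h$, and analyze the $\h$-valued obstruction (the $\h$-component of $[\m,\m]$, modulo the freedom of re-choosing the complement by a linear map $\m \to \h$) to deforming $\m$ into a subalgebra. Showing that this obstruction can always be removed in the hyperbolic and parabolic cases, but is genuinely nonremovable exactly in the elliptic case, then matching the resulting algebra with $\R \ltimes \heis$ and its invariant $b(\rho)$, is the delicate computational core; tying it back cleanly to Theorem \ref{Theorem 1.1} is what makes the exceptional family emerge.
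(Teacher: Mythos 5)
Your stratification by $d=\dim\Iso^0(M)$, the treatment of $d=3$ and $d\geq 5$, and the identification of the governing dichotomy for $d=4$ (existence of a $3$-dimensional subalgebra transverse to the isotropy) all match the paper's strategy in outline, but two steps at the heart of your $d=4$ case are genuinely broken. First, the implication ``a $3$-dimensional subgroup with an open orbit acts transitively on the simply connected $M$'' is false, and the paper itself records the counterexample: for $G=\R\times\widetilde{\SL}(2,\R)$ with hyperbolic isotropy $I\subset\widetilde{\SL}(2,\R)$, so that $M=\R\times\mathsf{dS}^2$ is symmetric, the subgroup $L=\R\times\Aff$ is transverse to $I$ and has an open orbit in $G/I$, yet it is not transitive (and $M$ is not a left-invariant metric on $L$). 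Simple connectedness upgrades a locally free \emph{transitive} action to a free one; it does not upgrade an open orbit to a transitive action. This is exactly the ``local vs global'' issue the paper sets out to repair: its proof produces, whenever possible, a transverse subalgebra which is an \emph{ideal} (so that $G=LI$ for group-theoretic reasons), and in the residual Heisenberg case verifies by hand that $\l\cap\heis$ is an ideal meeting no conjugate of the isotropy, so that every $L$-orbit is open.

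Second, the claim ``no transverse subalgebra $\Rightarrow$ $K$ elliptic and $\g\cong\R\ltimes\heis$'' is wrong on both counts, and the error propagates into your proposed computational core. The algebra $\R\oplus\mathfrak{su}(2)$ with isotropy $\so(2)\subset\mathfrak{su}(2)$ admits no transverse $3$-dimensional subalgebra either (its only $3$-dimensional subalgebra is $\mathfrak{su}(2)$ itself, which contains the isotropy), yet it gives the symmetric space $\R\times\Sphere^2$, not a Heisenberg extension; your argument as written would not see it. More seriously, in the exceptional Heisenberg-extension case the isotropy generator is \emph{parabolic}, not elliptic: by the paper's Lemma \ref{lem} a nonzero nilpotent infinitesimal Lorentz isometry must have nilpotency order exactly $3$, and this order-$3$ unipotence of the isotropy is precisely what forces $[\g,\g]=\heis$ in the solvable case. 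The adjective ``elliptic'' in ``elliptic plane wave'' refers to the spectrum of $\rho(t)$ acting on $\heis/\ZZ$, not to the conjugacy class of the isotropy in $\so(1,2)$ --- every $\P_\rho$ has unipotent isotropy. Consequently your plan to run through the three conjugacy types of $K$ and show the obstruction ``is nonremovable exactly in the elliptic case'' is organized around the wrong invariant and would give the wrong answer: it is for parabolic $K$ that the obstruction is sometimes nonremovable, namely exactly when the extension $\R\ltimes\heis$ is non-real. The paper reaches this by splitting $\g$ according to solvability, using the position of $\i$ relative to $[\g,\g]$ together with Lemma \ref{lem} to force $[\g,\g]=\heis$, and then Proposition \ref{semi_direct_Heisenberg}(2) to identify the non-real (similarity) representations as exactly those without a transverse subalgebra; your appeal to Theorem \ref{Theorem 1.1}(2)--(3) for the endgame is correct once this structure is in place.
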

In other words, we add elliptic non-unimodular plane waves to the existing classifications in the literature (e.g. in \cite{Cal1}). More importantly, we drop the completeness assumption usually made to get such a classification.

\subsubsection{Homogeneous locally symmetric case} It is natural to ask what exactly happens in the locally symmetric case.

\begin{theorem}  \label{locally_symmetric} Let $X$ be a Lorentz symmetric space of dimension $3$. Homogeneous (simply connected) spaces $M$ modeled on $X$ correspond to open orbits of  connected  closed subgroups $L\subset \Isom(X)$. Such $L$ may have dimension $3, 4, 5$ or $6$.

\begin{itemize}
    \item[$\bullet$] Let $L$ be a subgroup of $\Isom(X)$ of dimension $4$. Then $L$ has at least one  open orbit. It can have a discrete quantity of  $2$-dimensional orbits, which are furthermore  (complete) totally geodesic lightlike  surfaces in $X$. 
    \item[$\bullet$] If $\dim L \geq  5$ then it acts transitively and $X$ has constant sectional curvature.
  %  \item If  $\dim L = 6$, then $L = \Isom_0(X)$ and $X$ has constant sectional curvature.
\end{itemize}

\end{theorem}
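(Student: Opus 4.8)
The plan is to set up a developing-map correspondence and then reduce everything to an orbit/isotropy dimension count, treating the two itemized claims as a short Lie-theoretic analysis of the possible orbit types.

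First I would establish the stated correspondence. Since $X$ is a simply connected, complete, real-analytic symmetric space and $M$ is locally symmetric and locally isometric to $X$, every local isometry extends, so the developing map $D\colon M\to X$ is a global local isometry with open image $U=D(M)$. Homogeneity of $M$ means $\Iso(M)$ is transitive; transporting it through $D$ yields a connected closed subgroup $L\subseteq\Iso(X)$ for which $D$ is equivariant and which acts transitively on $U$, so $U\cong L/L_x$ is an open orbit and $M$ is isometric to the universal cover of $U$. Conversely, the universal cover of any open orbit is such an $M$. Because an open orbit is $3$-dimensional, $\dim L\ge 3$, while $\dim L\le\dim\Iso(X)\le 6$; hence $\dim L\in\{3,4,5,6\}$.

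Next, for $\dim L\ge 5$ I would invoke the classification of $3$-dimensional Lorentz symmetric spaces: apart from the constant-curvature ones, for which $\dim\Iso(X)=6$, every symmetric model (the decomposable products and the two Cahen--Wallach spaces) has $\dim\Iso(X)=4$. Thus $\dim L\ge 5$ forces $\dim\Iso(X)=6$ and $X$ of constant curvature, with point-stabilizer identity component $\Iso(X)_x^{0}\cong\SO^{0}(1,2)$. The key point is that this isotropy acts \emph{irreducibly} on $T_xX\cong\R^{1,2}$ (it fixes no timelike or spacelike line, acts transitively on null lines, hence preserves no line or plane). A non-open orbit has dimension $\le 2$, so $\dim L_x\ge\dim L-2\ge 3$; combined with $\dim L_x\le 3$ this gives $L_x^{0}=\SO^{0}(1,2)$ together with a proper nonzero invariant subspace $T_x(Lx)$, which is impossible. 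Hence every orbit is open and, by connectedness of $X$, $L$ is transitive.

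The heart of the matter is $\dim L=4$. If $\dim\Iso(X)=4$ then $L=\Iso^{0}(X)$ is transitive and there are no lower orbits. So assume $X$ has constant curvature, $\dim\Iso(X)_x=3$; then $\dim(Lx)=4-\dim L_x$, and the irreducibility argument rules out $\dim L_x=3$, so orbits have dimension $2$ or $3$. For a $2$-dimensional orbit $\dim L_x=2$, and a signature analysis forces the induced metric to be degenerate: if $T_x(Lx)$ were spacelike or timelike, $L_x$ would preserve its orthogonal complement and embed in $\O(2)\times\O(1)$ or $\O(1,1)\times\O(1)$, both of dimension $1<2$. Hence the orbit is lightlike and $L_x$ is the $2$-dimensional parabolic stabilizing the null plane $T_x(Lx)=\Span(e_{+},e_0)$. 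To see it is \emph{totally geodesic}, I would use equivariance of the second fundamental form $\mathrm{II}\colon T_x(Lx)\times T_x(Lx)\to\nu_x=\R e_{-}$: the boost $A\subseteq L_x$ scales $e_{+}\mapsto\lambda e_{+}$, fixes $e_0$, and scales $e_{-}\mapsto\lambda^{-1}e_{-}$, so on each basis pair the two sides of the equivariance relation scale by incompatible powers of $\lambda$, forcing $\mathrm{II}\equiv 0$; being totally geodesic in the complete space $X$, the orbit is geodesically complete.

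It remains to produce at least one open orbit and to show the lightlike orbits are discrete, and this is where I expect the main obstacle. For the existence of an open orbit I would exclude the alternative that all orbits are $2$-dimensional: this would foliate $X$ by lightlike leaves each preserved by $L$, and the isometries preserving every leaf of such a foliation form a subgroup of dimension at most $3$, contradicting $\dim L=4$. For discreteness I would use that $x\mapsto\dim L_x$ is upper semicontinuous, so the $2$-dimensional orbits form a closed invariant set; a one-parameter family of these codimension-one lightlike orbits would sweep out an open set all of whose points lie in $2$-dimensional orbits, reproducing the foliation situation just excluded. The clean weight computation settles lightlikeness and total geodesy almost immediately, so the genuine work is precisely this last pair of statements — guaranteeing the open orbit and the isolatedness of the lightlike orbits.
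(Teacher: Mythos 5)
Your overall architecture (developing-map correspondence, isotropy-dimension count, the signature argument forcing $2$-dimensional orbits to be lightlike) matches the paper's, and two of your ingredients are arguably cleaner than what is printed: the explicit irreducibility argument for $\dim L\ge 5$ (the paper only remarks that this case ``follows''), and the weight/equivariance computation killing the second fundamental form, which replaces the paper's identification of the orbit with an explicit geodesic surface. However, there are genuine gaps at exactly the three points you yourself flag as the remaining work. (i) \emph{Existence of an open orbit}: your argument rests on the unproved assertion that the isometries preserving every leaf of a foliation by lightlike surfaces form a group of dimension at most $3$. This is true but not soft; proving it amounts to classifying the lightlike totally geodesic surfaces together with their stabilizers, which is precisely the content of the paper's explicit analysis of the $\Aff$-action near $0$ in $\R^{1,2}$: an $H$-invariant lightlike surface is either the punctured light cone, which only $\SO(1,2)$ preserves, or an open piece of the null plane $\Sigma_0$, whose full stabilizer $H\ltimes\R^2$ is exactly $4$-dimensional and visibly has two open orbits. (ii) \emph{Discreteness}: upper semicontinuity of $x\mapsto\dim L_x$ only makes the union $S$ of non-open orbits closed; a non-discrete family of $2$-dimensional orbits need not ``sweep out an open set'' (think of a Cantor set of leaves), so your dichotomy does not close. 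One needs either analyticity of $S$ or, as in the paper, the identification of $L$ with the full stabilizer of a single lightlike geodesic surface, which pins down the non-open orbits exactly (one in the flat and de Sitter cases, two in the anti-de Sitter case). (iii) \emph{Completeness}: ``totally geodesic in a complete space, hence complete'' is false as stated --- a proper open subset of $\Sigma_0$ is totally geodesic and incomplete --- and homogeneity of the orbit alone does not rescue it, since the isotropy is non-compact and a uniform injectivity argument is unavailable. Again the fix is the one the paper uses: show that $L$ equals the full $4$-dimensional stabilizer of the model surface, which acts transitively on it, so the orbit is the entire (complete) surface.

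In short, your proposal correctly isolates where the difficulty lies but substitutes assertions for the explicit orbit analysis that resolves it: the paper's computation of the $H$-orbits on the levels $2uv+x^2=\pm r^2$ and on $\Sigma_0$, and its analogues in the de Sitter and anti-de Sitter models via the exponential map and the Gauss lemma, is not an optional verification but the step that simultaneously delivers the open orbit, the finiteness of the $2$-dimensional orbits, and their completeness.
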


\subsubsection{Locally homogeneous compact Lorentz manifolds} \label{Thurston}

 In the vein of Thurston's eight Riemannian homogeneous geometries in dimension $3$, the authors 
 of  \cite{DZ} study Lorentz geometries in dimension $3$ possessing compact forms, that is,  pairs $(G, X)$ with $X$ a Lorentzian $3$-manifold on which $G$ acts isometrically transitively and there exists some compact manifold that can be modeled on this $(G, X)$-structure. If the isotropy of $G$ acting on $X$ is compact then $G$ preserves a Riemannian metric and we find ourselves with Thurston's geometries. It is then natural to assume the Lorentz geometry to be of non-Riemannian type, that is, it has non-compact isotropy.
 \begin{theorem}[\cite{DZ}] \label{DZ} Let $(M, g)$ be a locally homogeneous compact Lorentz $3$-manifold of non-Riemannian type. Then, $M$ admits a Lorentz metric, probably different from $g$, of (non-positive) constant sectional curvature. More precisely, up to finite cover, $(M, g)$ is isometric to \begin{itemize}
     \item[$\bullet$] An anti de Sitter manifold $\Gamma\backslash\operatorname{AdS_3}$, where $\operatorname{AdS_3}$ is $\PSL(2, \R)$ endowed with its Killing form, and $\Gamma \subset \PSL(2, \R) \times \PSL(2, \R)$ acts properly co-compactly on it. 
 \item[$\bullet$] Or $M$ is a quotient $\Gamma\backslash L$ where $L $ a 3-dimensional Lie group and $\Gamma \subset L$ is a co-compact lattice,  and metrically $L$ is endowed with a left-invariant Lorentz metric which fits into one of the following cases
 \begin{enumerate} 
 \item $L= \R^3$ endowed with a flat metric.
 \item $L= \Heis$ or $L = \SOL$. In both cases, $L$ has (up to isomorphism) two left-invariant  metrics with non-compact isotropy, one of which is flat.
 \item $L= \PSL(2, \R)$. It has, up to isomorphism, two left-invariant metrics of non-compact isotropy, in addition to the $\operatorname{AdS}$-one which is given by its Killing form.
\end{enumerate}
  \end{itemize}
\end{theorem}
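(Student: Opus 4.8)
The plan is to turn the statement into an existence question for compact forms of the finitely many non-Riemannian locally homogeneous Lorentz $3$-geometries, and then to eliminate those that carry none. First I would pass to the Lie algebra $\g$ of local Killing fields: local homogeneity makes $\g$ transitive, with isotropy $\h$, and the faithful linear isotropy representation $\h\hookrightarrow\so(1,2)$ has non-compact image exactly because $M$ is of non-Riemannian type. In this dimension the pair $(\g,\h)$ with its invariant Lorentz form integrates to a simply connected homogeneous model $X$ to which $M$ is locally isometric, so (after a finite cover making $M$ orientable and time-orientable) a compact $M$ is a compact $(\Iso(X),X)$-manifold. By Theorem~\ref{classification}, $X$ belongs to one of three classes: a $3$-dimensional Lorentz group, a globally symmetric space, or a non-unimodular elliptic plane wave; recall that the hyperbolic and parabolic non-unimodular plane waves already lie in the Lorentz-group class, since they admit a $3$-dimensional group acting simply transitively. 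It then remains to decide, class by class, when a compact quotient exists.

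For the symmetric models I would separate constant curvature from Cahen--Wallach. Among constant-curvature spaces, de Sitter $\mathsf{dS}_3=\SO^0(1,3)/\SO^0(1,2)$ is excluded by the Calabi--Markus phenomenon (its non-compact isotropy forces every properly discontinuous subgroup to be finite), which is precisely why only non-positive curvature occurs; Minkowski space produces flat compact quotients (the flat tori of case~(1) together with the flat metrics on $\Heis,\SOL$ of case~(2)), and $\mathsf{AdS}_3=\PSL(2,\R)$ with its Killing form produces the standard compact anti-de Sitter manifolds $\Gamma\backslash\PSL(2,\R)$ with $\Gamma\subset\PSL(2,\R)\times\PSL(2,\R)$ proper and cocompact. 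The Cahen--Wallach spaces (the unimodular, i.e. symmetric, plane waves) are non-flat plane waves, so by Theorem~\ref{no_compact} they admit no compact model and drop out.

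For the Lorentz-group class, a compact quotient is $\Gamma\backslash L$ with $\Gamma\subset L$ a cocompact lattice, and a lattice forces $L$ unimodular; running through the unimodular $3$-dimensional groups and keeping only those carrying a left-invariant Lorentz metric of non-Riemannian type together with a lattice leaves exactly $\R^3,\Heis,\SOL$ and $\PSL(2,\R)$, with the metrics listed in cases~(1)--(3). The genuinely delicate point, and the place where the argument of \cite{DZ} must be repaired, is the elliptic non-unimodular plane wave: since a compact Lorentz manifold need not be complete (Clifton--Pohl), one cannot exclude a compact model merely from the incompleteness of the model (Theorem~\ref{Causality}). Here I invoke Theorem~\ref{no_compact}: the only non-flat plane wave admitting a compact model is the (hyperbolic, non-unimodular) $\SOL$-metric $g=2\,du\,dv+\tfrac{2x^2}{u^2}\,du^2+dx^2$, whose compact forms arise from lattices of $\SOL$ acting properly; in particular the elliptic non-unimodular plane wave, whose full isometry group $\Iso^0(\P_\rho)=G_\rho=\R\ltimes\Heis$ is non-unimodular and contains no simply transitive $3$-dimensional subgroup, supports no properly discontinuous cocompact action and hence no compact model. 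This is exactly the family that was erroneously treated in \cite{DZ}, and it is now correctly ruled out.

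Finally I would upgrade each surviving $\Gamma\backslash L$ to constant curvature with the \emph{same} $\Gamma$: the flat left-invariant Lorentz metric on $\R^3,\Heis,\SOL$ and the bi-invariant Killing-form ($\mathsf{AdS}$) metric on $\PSL(2,\R)$ are each invariant under the same left translations, so they descend to the same compact manifold and have non-positive constant sectional curvature; together with the $\mathsf{AdS}$ quotients this yields the asserted metric ``probably different from $g$''. Collecting the surviving families gives the four cases of the statement. The main obstacle is the plane-wave step: proving that the elliptic non-unimodular $\P_\rho$ admit no proper cocompact discrete action, which is a statement about the dynamics of $G_\rho=\R\ltimes\Heis$ on $\P_\rho=G_\rho/I$ and is precisely the content secured by Theorem~\ref{no_compact}.
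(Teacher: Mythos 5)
Your proposal is correct and follows essentially the same route as the paper: Theorem \ref{DZ} is imported from \cite{DZ}, and the paper's only new contribution to its proof is exactly the step you single out, namely using Theorem \ref{no_compact} (completeness of compact $(G_\rho,\P_\rho)$-structures plus the case analysis of discrete cocompact $\Gamma\subset G_\rho$) to rule out compact models of the elliptic non-unimodular plane waves, the family mishandled in \cite{DZ}. The remaining ingredients you list (Calabi--Markus for $\dS_3$, unimodularity and the lattice enumeration for the Lorentz-group class, the standard $\mathsf{AdS}$ quotients) are precisely the parts both you and the paper defer to \cite{DZ}.
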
 
In the proofs of \cite{DZ}, some cases of Heisenberg extensions (actually corresponding to elliptic plane waves) were ruled out incorrectly because they were treated as algebraic groups (while they are not)! The present proof (Theorem \ref{no_compact}) of  the non-existence of compact models of elliptic plane waves fills this gap.

Let us finally observe that the algebraically most difficult case is the anti de Sitter one. It is indeed delicate to describe discrete subgroups  $\Gamma \subset \PSL(2, \R) \times \PSL(2, \R)$ acting properly co-compactly on $\PSL(2, \R)$ (see \cite{DGK, Sch} for recent results).

\subsection{Organization of the  article} We start Sect. \ref{Riem} by classifying homogeneous $3$-dimensional Riemannian manifolds, essentially for two purposes: to provide a simpler proof (compared to the ones in the literature, for instance  in {\cite{Sek}), and to compare with the Lorentzian situation to see what is new.

In Sect. \ref{Heis} we define our family of Lorentz homogeneous spaces $\P_{\rho}$, as quotients of Heisenberg extensions. These spaces will play the major role in the two main parts of this paper, on the one hand, they are the spaces of discussion in the part about plane waves and, on the other hand, a sub-family of them will constitute the new examples that are neither locally symmetric nor locally isometric to Lorentz groups.

Sections  \ref{solv} and  \ref{non-solv} will be devoted to the proof of the classification theorem  \ref{classification}. For easiness, the proof is divided into two parts:  Sect. \ref{solv} treats the case where the isometry group is solvable and Sect. \ref{non-solv} treats the non-solvable case.

Proof of Theorem \ref{locally_symmetric}, about the classification of homogeneous locally symmetric Lorentz $3$-manifolds, is presented in Sect. \ref{proof-loc-sym}.

Theorem \ref{Theorem 1.1} concerning the algebraic structure of the spaces $\P_{\rho}$ is proven in Sect. \ref{prof 1.1}.

Proofs of the statements about plane waves start, essentially, from  Sect. \ref{planewaves 8}. Sect. \ref{planewaves 8} is devoted to the proof of Theorem \ref{plane_waves} that characterizes all homogeneous $3$-dimensional plane waves. Theorem \ref{coordinates}, about the global coordinates for non-unimodular plane-waves, is proven in Sect. \ref{proof coor}. Proof of Theorem \ref{Causality} is divided into two parts: the completeness statement is proven in Sect. \ref{s10}, and the non-extendibility statement is discusses in  Sect. \ref{s11}. Finally, proof of the statement of the (non-)existence of compact models (Theorem \ref{no_compact}) is detailed in Sect. \ref{s12}.
 
 \subsection*{Acknowledgments} We thank Ines Kath for answering our question on which spaces $\P_\rho$ are Cahen-Wallach, which was a crucial step in the development of our work. The first author is supported by the SFB/TRR 191 \textit{Symplectic Structures in Geometry, Algebra and Dynamics}, funded by the DFG (Projektnummer 281071066 - TRR 191).

 \section{What's new in the non-Riemannian case.}\label{Riem}
As we mentioned before, it is known that any homogeneous simply connected Riemannian manifold of dimension three is either symmetric or isometric to a left-invariant metric on a Lie group of dimension three, see \cite{Sek}. Comparing this to the Lorentz situation led us to new examples. This section is devoted to this comparison. 
 
\subsection{The Riemannian case} We give here a proof of the classification of three-dimensional homogeneous simply connected Riemannian manifolds.
 \begin{theorem}
 A simply connected homogeneous Riemannian 3-manifold $(M,g)$ is globally symmetric  or isometric to a Lie group with a left-invariant Riemannian metric.
 \end{theorem}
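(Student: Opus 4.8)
The plan is to let $G=\Iso^0(M,g)$ act transitively, with isotropy $H$ at a base point $p$, so that $M\cong G/H$. Since an isometry fixing $p$ is determined by its differential there, the isotropy representation $H\hookrightarrow\SO(T_pM)\cong\SO(3)$ is faithful, and $H$ is a compact connected subgroup of $\SO(3)$; hence $\dim H\in\{0,1,3\}$ and $\dim G\in\{3,4,6\}$. I would first dispose of the two extreme cases. If $\dim G=3$ the isotropy is discrete, the orbit map $G\to M$ is a covering, and simple connectivity of $M$ forces $H=\{e\}$, so $M=G$ with a left-invariant metric. If $\dim G=6$, then $H=\SO(3)$ acts transitively on the unit sphere of $T_pM$, so $M$ is isotropic, of constant sectional curvature, hence symmetric. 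All the content sits in the case $\dim G=4$, $H\cong\SO(2)$, which is the main obstacle.

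For this case I would fix a reductive decomposition $\g=\h\oplus\m$ with $\h=\R\xi$, identify $\m\cong T_pM$, and split it $\Ad(H)$-equivariantly as $\m=\R e_0\oplus\Span(e_1,e_2)$, where $e_0$ spans the axis fixed by the rotation and $\ad\xi$ acts by $[\xi,e_1]=e_2$, $[\xi,e_2]=-e_1$. Decomposing $\Lambda^2\m$ into isotypic components for $\SO(2)$ and applying Schur's lemma pins down the remaining brackets up to four real parameters: $[e_0,e_1]=\alpha e_1+\beta e_2$, $[e_0,e_2]=-\beta e_1+\alpha e_2$, and $[e_1,e_2]=\gamma\xi+\delta e_0$. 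The Jacobi identity on the triple $(e_0,e_1,e_2)$ then yields the single relation $\alpha(\gamma,\delta)=(0,0)$, so either $\gamma=\delta=0$ or $\alpha=0$.

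The dichotomy now reads off the bracket. Whenever $\gamma=0$ (this includes the whole branch $\gamma=\delta=0$ and the branch $\alpha=0,\gamma=0$) the subspace $\m$ is a subalgebra transverse to $\h$; the connected subgroup $S=\exp(\m)$ then has an open orbit through $p$, and combining completeness of the homogeneous metric with simple connectivity of $M$ shows that $S$ acts simply transitively, so $M=S$ is a Lie group with left-invariant metric. The only genuinely new possibility is $\alpha=0$, $\gamma\neq 0$, where $\m$ is not a subalgebra. Here I would set $e_0'=e_0-\beta\xi$ and verify that $e_0'$ is central in $\g$. The quotient $\g/\R e_0'$ carries brackets $[\bar\xi,\bar e_1]=\bar e_2$, $[\bar\xi,\bar e_2]=-\bar e_1$, $[\bar e_1,\bar e_2]=(\gamma+\delta\beta)\bar\xi$: if $\gamma+\delta\beta\neq0$ this quotient is $\so(3)$ or $\sll(2,\R)$, and the central extension splits, giving $\g\cong\R\oplus\so(3)$ or $\g\cong\R\oplus\sll(2,\R)$; if $\gamma+\delta\beta=0$ the quotient is the Heisenberg algebra, in which case $\Span(e_0',e_1,e_2)\cong\Heis$ is a transverse subalgebra and $M$ is again a Lie group (the Nil geometry).

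The decisive point in the simple branch is the compactness of the isotropy. Writing $\xi$ in the splitting $\g=\R e_0'\oplus[\g,\g]$ shows that $\xi$ acquires a component $-\tfrac{\delta}{\gamma+\delta\beta}\,e_0'$ along the central, hence noncompact, $\R$-factor; since $H=\exp(\R\xi)$ must be a closed circle, its projection to that factor has to vanish, forcing $\delta=0$. Then $M$ is $\R\times(\SO(3)/\SO(2))=\R\times\Sphere^2$ or $\R\times(\SL(2,\R)/\SO(2))=\R\times\HH^2$, a Riemannian product of symmetric spaces, hence symmetric. This is exactly the case realized by $\Sphere^2\times\R$ (symmetric but not a Lie group) that makes the ``symmetric or Lie group'' alternative genuinely necessary, and it is the step I expect to require the most care. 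The one remaining technical nuisance, in the subalgebra branches, is to confirm that the open orbit of $S$ exhausts $M$ and that $S$ is closed; this I would settle by a properness argument using completeness of $g$.
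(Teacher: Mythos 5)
Your setup and the Schur--lemma reduction to the bracket parameters $(\alpha,\beta,\gamma,\delta)$ with the Jacobi constraint $\alpha\gamma=\alpha\delta=0$ are correct, and the branches $\gamma=0$ and $\alpha=0,\ \gamma+\delta\beta=0$ are handled properly (modulo the deferred claim that an open orbit exhausts $M$, which does follow from completeness of homogeneous Riemannian metrics; note also that in the latter branch the quotient $\g/\R e_0'$ is the Euclidean algebra $\e(2)$, not $\heis$, though the transverse Heisenberg subalgebra you exhibit is what actually matters). The genuine error is at what you call the decisive point. From ``$H$ is a closed circle'' you conclude that its projection to the central factor $\R e_0'$ must vanish, ``forcing $\delta=0$''. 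This tacitly assumes that the connected central subgroup $\exp(\R e_0')\subset G$ is noncompact, which is unjustified and in fact false in the very examples this case contains: for a Berger sphere, $\Iso^0(M)\cong \U(2)=(\SU(2)\times\U(1))/\Z_2$, whose center is a compact circle, $\g\cong\mathfrak{su}(2)\oplus\R$, and the isotropy algebra is the diagonal line --- exactly your configuration with $\delta\neq0$. The same occurs for the left-invariant metrics on $\widetilde{\SL}(2,\R)$ with four-dimensional isometry group. So $\delta\neq0$ does occur, the conclusion that $M$ must be $\R\times\Sphere^2$ or $\R\times\HH^2$ in this branch is false, and as written your proof omits the Berger spheres and the $\widetilde{\SL}(2,\R)$-geometry.

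The repair is precisely the dichotomy the paper uses here. When $\g\cong\R\oplus\s$ with $\s$ semisimple, either $\i\subset\s$ (your $\delta=0$), in which case the fixed line is the trivial isotypic factor, is orthogonal to $\s/\i$, and $M$ is the symmetric product $\R\times\Sphere^2$ or $\R\times\HH^2$; or $\i\not\subset\s$ (your $\delta\neq0$), in which case $\s=[\g,\g]$ is a $3$-dimensional \emph{ideal} transverse to $\i$; transversality then propagates to every point by normality, all $S$-orbits are open, $S$ acts transitively, and freely by simple connectedness, so $M$ is $\SU(2)$ or $\widetilde{\SL}(2,\R)$ with a left-invariant metric. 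With that correction your argument becomes a valid, somewhat more computational variant of the paper's proof, which organizes the case $\dim G=4$ by the dimension of the radical and by whether $\i$ lies in the derived (resp.\ semisimple) part, rather than by explicit structure constants.
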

 
 \begin{proof} Denote by $G$ the identity component of the full isometry group of $(M,g)$ and $\g$ its Lie algebra. Let $m$ be a point in $M$ whose stabilizer in $G$ is denoted by $I$ (which is a connected compact subgroup of $G$). The isotropy representation of $I$ in $\SO(g_m)\simeq \SO(3)$ is faithful. Since $\SO(3)$ has no two-dimensional subgroups then $I$ is either three-dimensional, one-dimensional, or trivial. If the dimension is zero then we are done (i.e. $(M,g)$ is a Riemannian Lie group). If $I$ is three-dimensional then the sectional curvature is constant which means that $M$ is globally isometric to one of the Riemannian models with constant sectional curvature.

 Suppose now that $I$ is one dimensional i.e. $G$ is four-dimensional.
 \begin{fact}
  The solvable radical of $\g$, denoted by $\operatorname{R}(\g)$, is non-trivial.
 \end{fact}
 \begin{proof}
 If not, then $G$ is by definition a semi-simple Lie group of dimension $4$ which is impossible. 
 \end{proof}
 
If $\dim(\operatorname{R}(\g))=1$ then by Levi's decomposition either 
$\g=  \mathfrak{su}(2) \ltimes \R$ or $\g=   \mathfrak{sl}(2,\mathbb{R}) \ltimes \R$. But, semi-simplicity of $\mathfrak{sl}(2,\mathbb{R})$ and $\mathfrak{su}(2)$ implies that the above semi-direct product is a direct product. In both cases if $\i=\Lie(I)\subset \mathfrak{su}(2)$ or $\Lie(I)\subset \mathfrak{sl}(2,\mathbb{R})$ then $I=\SO(2)$ up to conjugacy, and the tangent space of $G/I$ at $I$ is $\R  \oplus (\s/\i)$ with $\s$ equals $\mathfrak{sl}(2,\mathbb{R})$ or $\mathfrak{su}(2)$. Hence $\R$ is exactly the factor on which $I$ acts trivially and $\s/\i$ is orthogonal to $\R$ since this is the unique $I$-invariant subspace transverse to $\i$. This implies that the metric on $\R  \oplus (\s/\i)$ is a product metric. Therefore, up to scaling, $(M,g)$ is isometric to the symmetric spaces  $$\mathbb{R} \times (\SL(2,\mathbb{R})/\SO(2))=\R\times\mathbb{H}^2 \ \ \text{or} \ \ \mathbb{R} \times (\SU(2)/\SO(2))=\R \times\mathbb{S}^2.$$ And, if $\Lie(I)$ is transverse to the semi-simple factor, then $(M,g)$ is isometric to a left-invariant metric on $\widetilde{\SL}(2,\R)$ or $\SU(2)$.

If $\dim(\operatorname{R}(\g))\geq 2$ then $\g$ is solvable and $\i$ cannot be contained in $[\g,\g]$ because, if not, we would have that $\ad_h$ is nilpotent for every $h \in \i$ and it cannot be the infinitesimal isometry of a Riemannian inner product (unless $\ad_h$ is identically zero, but this is not the case since the derivative representation of $I$ is faithful). Using this along with the fact that $\g/[\g,\g]$ is abelian, one can find an ideal $L$ supplementary to $\i$ by pulling-back a hyperplane in $\g/[\g,\g]$ transverse to the projection of $\i$. Since the normal subgroup corresponding to $L$ is transverse to $I$, it follows that its action on $M=G/I$ is transitive and locally free. But, since $M$ is simply connected then the action is in fact free and $(M,g)$ is globally isometric to a left-invariant metric on this Lie group.

 \end{proof}
\subsection{The Lorentzian case}
 One may observe that many ideas in the Riemannian case can be generalized immediately to the case of homogeneous Lorentzian three-manifolds except, a priori, two of them:
 \begin{enumerate}
     \item The one concerning the fact that $I$ cannot be two-dimensional. Here, one observes that $\SO(1,2)$ contains two-dimensional subgroups (the affine group of $\R$ which is unique, up to conjugacy, inside $\SO(1,2)$). However, one still manages to show that it cannot represent the full isotropy group of a point in a Lorentzian $3$-manifold.
     \item The fact that non-zero nilpotent endomorphisms cannot be the infinitesimal isometry of a Riemannian inner product, which is possible in the Lorentzian setting. This is the main difference which leads to new examples. We will investigate all of this in what follows.
 
 \end{enumerate}
Let us first give some general considerations. Suppose that  $(M, g)$  is a locally homogeneous Lorentz three-manifold, meaning that any $p, q \in M$ have neighborhoods $U$ and $V$ such that there exists $f: U \to V$ isometric with $f(p) = q$.  In particular, such $(M,g)$ is analytic and it is known that if it is simply connected, then any locally defined Killing field extends (uniquely) globally  \cite{Nom}. In this case, let $\g$ be the Lie algebra of (all) Killing fields and $G$ its associated simply connected group. Choose a base point $p_0$ and let $\i$ be its (full) isotropy subalgebra, i.e. those Killing fields vanishing at $p_0$. We have the following
 \begin{proposition}\label{isotropy}
     The isotropy subalgebra $\i$ cannot have dimension two. 
 \end{proposition}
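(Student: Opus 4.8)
The plan is to combine the faithfulness of the isotropy action with the rigidity of the curvature tensor in dimension three. First I would recall that the isotropy representation $\i \to \so(T_{p_0}M)\cong\so(1,2)$, sending a Killing field $K$ vanishing at $p_0$ to its linearization $\nabla K|_{p_0}$, is \emph{faithful}: a Killing field is determined by its $1$-jet at a point, so $K(p_0)=0$ and $\nabla K|_{p_0}=0$ force $K\equiv 0$ on the connected analytic manifold $M$. Hence a two-dimensional $\i$ is carried isomorphically onto a two-dimensional subalgebra of $\so(1,2)\cong\sll(2,\R)$. Since $\sll(2,\R)$ admits no two-dimensional abelian subalgebra, every two-dimensional subalgebra is the nonabelian affine algebra $\mathfrak{aff}$, and all of these are conjugate to the Borel subalgebra. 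Conjugating by an element of $\O(1,2)$, I may therefore assume $\i=\Span\{h,n\}$ where, in a null frame $\{e_+,e_-,e_0\}$ with $\langle e_+,e_-\rangle=\langle e_0,e_0\rangle=1$ and the remaining products zero, the boost acts by $h e_\pm=\pm e_\pm$, $h e_0=0$, and the nilpotent by $n e_+=e_0$, $n e_0=-e_-$, $n e_-=0$.

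The second step uses that isometries fixing $p_0$ preserve the curvature tensor, so infinitesimally $\i\subset\so(1,2)$ annihilates $R|_{p_0}$. In dimension three the Weyl tensor vanishes and $R$ is completely determined by the Ricci tensor, so it suffices to find the symmetric bilinear forms $S$ on $T_{p_0}M$ that are $\i$-invariant. Invariance under $h$ forces $S(e_i,e_j)=0$ whenever the $h$-weights of $e_i,e_j$ do not cancel, leaving only $S(e_+,e_-)=a$ and $S(e_0,e_0)=b$; then invariance under $n$, tested on the pair $(e_+,e_0)$, gives $0=S(ne_+,e_0)+S(e_+,ne_0)=S(e_0,e_0)-S(e_+,e_-)=b-a$, so $b=a$. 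Hence every $\i$-invariant symmetric form is a scalar multiple of the metric, and in particular $\mathrm{Ric}|_{p_0}=\lambda\,g|_{p_0}$.

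Finally, by homogeneity this Einstein condition holds at every point, and a three-dimensional Einstein manifold has constant sectional curvature. But a constant-curvature Lorentz three-manifold is locally maximally symmetric, so (again because local Killing fields extend globally on the simply connected analytic $M$, by the result of \cite{Nom}) its full isotropy algebra realizes all of $\so(1,2)$ at $p_0$, forcing $\dim\i\geq 3$ and contradicting $\dim\i=2$. The routine part is the short weight computation of the second paragraph; the step I expect to require the most care is the conceptual translation of ``$\i$ preserves $R|_{p_0}$'' into the linear-algebra invariance condition — via the identification of the action of a Killing field vanishing at $p_0$ with its linearization acting on tensors — together with the dimension-three reduction of $R$ to $\mathrm{Ric}$, since these are precisely the places where the Lorentzian signature and the offending nilpotent $n$ (anticipated in the discussion of the Lorentzian case above) could have produced a genuinely new invariant tensor had the computation come out otherwise.
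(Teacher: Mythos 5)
Your argument is correct and follows the same overall skeleton as the paper's proof: the two-dimensional isotropy fixes $\mathrm{Ric}_{p_0}$, every symmetric form invariant under a two-dimensional subalgebra of $\so(1,2)$ is proportional to the metric, hence the space is Einstein, hence (in dimension three) of constant curvature, which forces the full isotropy to be three-dimensional — a contradiction. The one place you diverge is the middle step: the paper derives the proportionality from a soft, representation-theoretic fact (a vector of a finite-dimensional linear $\SL(2,\R)$-representation fixed by a two-dimensional connected subgroup $H$ is fixed by all of $\SL(2,\R)$, because $\SL(2,\R)/H\cong\mathbb{S}^1$ and linear $\SL(2,\R)$-actions admit no nontrivial compact orbits), whereas you conjugate $\i$ to the Borel and run an explicit weight computation in a null frame. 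Your route is more elementary and self-contained (and your frame computation checks out: the boost kills all components of nonzero weight, and the nilpotent identifies the two surviving ones); the paper's lemma is slicker and applies verbatim to any tensor fixed by the isotropy, not just symmetric $2$-forms. Both are complete; your version also spells out the faithfulness of the isotropy representation and the final passage through \cite{Nom}, which the paper leaves implicit in its sketch.
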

 
 \begin{proof}
The idea relies on the fact that if we have a representation of $\SL(2,\R)$ on some finite dimensional vector space $V$ such that there is $v\in V$ fixed by a two-dimensional (connected) subgroup $H\subset \SL(2,\R)$ (which is unique up to conjugacy) then $v$ is fixed by $\SL(2,\R)$ (this is due to the fact that $\SL(2,\R)/H \sim \mathbb{S}^1$ and $\SL(2,\R)$ cannot have non-trivial compact orbits when acting linearly on $V$). We have in our situation that the faithful representation, of local isometries fixing $p$, contains a $2$-dimensional subgroup inside $\SO_0(1,2)$ that fixes the Ricci tensor  $\operatorname{Ric}_p$. One deduces from the previous discussion that $\operatorname{Ric}_p$ is fixed by all $\SO_0(1,2)$. But any quadratic form invariant by $\SO_0(1,2)$ is proportional to the metric itself. So our Lorentz manifold is Einstein and it is known that, in dimension three, this is equivalent to having constant sectional curvature which implies that $\i$ is not the full isotropy subalgebra.  
 \end{proof}
 
\label{closed} So, if $(M, g)$ does not have constant sectional curvature then $\dim (I)$ can be $0$ or $1$ and, hence $\dim (G) = 3$ or $4$. It is known that in a simply connected Lie group $G$ of dimension $\leq 5$, any connected Lie subgroup is closed. Indeed, if $G$ is solvable then this is true in every dimension (see \cite{Mal} page 187, last paragraph). If $\dim(G)=4$ and $G$ is not solvable, then its Lie algebra is either $\mathfrak{su}(2)\oplus \R$ or $\mathfrak{sl}(2,\R)\oplus \R$ and in both cases we know that any connected Lie subgroup is closed. If $\dim(G)=5$ and $G$ is not solvable, then its Lie algebra is either $\mathfrak{su}(2)\oplus \R^2$, or $\mathfrak{sl}(2,\R)\oplus \R^2$, or $\mathfrak{sl}(2,\R)\ltimes \R^2$ and, again, the same holds in this case. But, observe that this is no longer true in higher dimensions as can be shown in dimension $6$, with  $I$ a dense subgroup in $\SO(2) \times \SO(2) \subset \SU(2) \times \SU(2)$, which is simply connected. Therefore, in our situation, the quotient $G/I$ exists and $M$ is locally isometric to it.\\\\
Observe that one can also find simply connected Lorentz homogeneous spaces $M= G/I$ with $\dim (G) = 5$ and $\dim (I) = 2$. But in this case, the Lie algebra $\g$ of $G$ is not the full Killing algebra of $M$. The full Killing algebra $\g^\prime$  has dimension $6$ and $M$ has constant curvature, but its associated group $G^\prime$, a priori,  acts only locally on $M$. This is because $M$ could be incomplete. However,  one can prove that a constant curvature manifold admitting an isometric action of  a $5$-dimensional group is complete (see Theorem \ref{locally_symmetric}).
\section{The Heisenberg extensions.}\label{Heis}
\begin{definition} We say that a  semi-direct product $G= \R \ltimes \Heis$ is non-real  if the $\R$-action on $\R^2 = \Heis/ \ZZ$  (where $ \ZZ$ denotes the center of $\Heis$) has complex non-real spectrum.  Equivalently, this action is conjugate to a non-trivial similarity action, i.e. a one-parameter group $R \subset \R^{+} \times \SO(2) \subset \GL(2, \R)$ with a non-trivial projection on $\SO(2)$.  For example, if $R$ is $\SO(2)$, then $G$ is the oscillator group \cite{Str, BM}. In the opposite case, we say that $G$ is real.
\end{definition}

 \begin{proposition} \label{semi_direct_Heisenberg}
 
 Let $I$ be any non-central one parameter group of $\Heis$ with $\Lie(I)=\i$. Let $G$ be any semi-direct product $G = \R \ltimes \Heis$. Then
\begin{enumerate}
    \item $G/I$ admits a $G$-invariant Lorentz metric if and only if the $\R$-action does not preserve the abelian group of rank $2$ generated by $I$ and $\ZZ$.
    \item Assume $G/I$ has a $G$-invariant Lorentz metric. Then there is a subalgebra $\l$ of dimension $3$ transverse to $\i$ if and only if $G$ is real.
\end{enumerate}

 \end{proposition}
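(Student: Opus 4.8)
\emph{Setup and reduction.} Write $\g = \R T \ltimes \heis$, where $T$ generates the $\R$-factor and $\ad_T$ acts on $\heis$ by a derivation $A$ (so $\rho(t)=e^{tA}$); let $\bar A$ be the induced endomorphism of $\heis/\ZZ$. Let $W\in\heis$ generate $\i$ (non-central), and set $\p=\i+\ZZ=\R W\oplus\R Z$. Since $I$ is connected and one-dimensional, a $G$-invariant metric on $G/I$ is the same datum as an $\Ad(I)$-invariant, equivalently $\ad_W$-skew, non-degenerate symmetric form on $V:=\g/\i$; note $\ad_W(\i)=0$, so $\ad_W$ descends to $V$. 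The plan is to compute $\ad_W$ on the $3$-dimensional space $V$ and then decide, by pure linear algebra, when it is skew for a Lorentzian form. The Heisenberg relations give this at once: $\ad_W$ maps $\heis$ into $\ZZ$ (with kernel $\p$), while $\ad_W(T)=-A(W)$. Everything therefore hinges on the position of $A(W)$ relative to $\p$, i.e. on whether $\bar W$ is an eigenvector of $\bar A$.

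\emph{Part (1).} First I would treat the case where the $\R$-action preserves $\p$ (equivalently $A(W)\in\p$, equivalently $\bar W$ is an eigenvector of $\bar A$). Then $\ad_W(V)\subseteq\R\bar Z$ and $\ad_W(\bar Z)=0$, so $\ad_W|_V$ is nilpotent with $\ad_W^2=0$; it is nonzero, since $W$ non-central makes $\ad_W$ already nonzero on $\heis/\i$, hence of rank exactly one. A short computation shows that a rank-one, square-zero nilpotent of $\R^3$ is skew for no non-degenerate symmetric form: skewness forces the entire row of the Gram matrix paired with the image line to vanish, making it degenerate. So in this case no $G$-invariant metric of any signature exists. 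In the complementary case $A(W)\notin\p$, so $\{Z,W,U\}$ with $U:=A(W)$ is a basis of $\heis$, and in the basis $\{\bar T,\bar U,\bar Z\}$ of $V$ one finds $\ad_W(\bar T)=-\bar U$, $\ad_W(\bar U)=\omega(\bar W,\bar U)\,\bar Z$ with $\omega(\bar W,\bar U)\neq0$ (the Heisenberg symplectic pairing), and $\ad_W(\bar Z)=0$. Thus $\ad_W|_V$ is a single nilpotent Jordan block of size $3$. Solving $Q\,\ad_W+\ad_W^{\top}Q=0$ yields a non-degenerate solution of signature $(2,1)$ — equivalently this is the adjoint action of a principal nilpotent of $\so(1,2)\cong\sll(2,\R)$ preserving its Killing form, which is Lorentzian. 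Hence a $G$-invariant Lorentz metric exists exactly in this case, proving (1).

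\emph{Part (2).} Now assume we are in the second case, so that $G/I$ carries an invariant Lorentz metric and $\bar W$ is \emph{not} an eigenvector of $\bar A$, and classify the $3$-dimensional subalgebras $\l$ transverse to $\i$. Since $\heis$ is an ideal and $\l\neq\heis$ (otherwise $\i\subset\l$), one has $\dim(\l\cap\heis)=2$; every $2$-dimensional subalgebra of $\heis$ contains $\ZZ$, so $\m:=\l\cap\heis=\R Z\oplus\R v$ and $\l=\R(T+w_0)\oplus\R Z\oplus\R v$ for some $w_0,v\in\heis$. Using $[w_0,v]\in\ZZ\subset\l$ and $[T+w_0,Z]\in\R Z\subset\l$, the only remaining subalgebra condition is $A(v)\in\R Z\oplus\R v$, i.e. $\bar v$ is an eigenvector of $\bar A$; transversality $W\notin\l$ amounts to $\bar v\not\parallel\bar W$. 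Because $\bar W$ is not an eigenvector, any real eigenvector of $\bar A$ is automatically non-parallel to $\bar W$. Hence a transverse $\l$ exists if and only if $\bar A$ has a real eigenvector, i.e. the spectrum of the action on $\heis/\ZZ$ is real — precisely the definition of $G$ being real. This proves (2).

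\emph{Main obstacle.} The conceptual heart is the dichotomy for $\ad_W|_V$ between a rank-one square-zero nilpotent and a regular (single-block) nilpotent, together with the invariant-form lemma that separates them; the rest is a clean enumeration of subalgebras. The points demanding care are confirming that the regular-nilpotent invariant form is genuinely non-degenerate and Lorentzian (not merely a formal solution of the skew-symmetry equations), and that the list in (2) is exhaustive — in particular that no transverse $3$-dimensional subalgebra can evade the eigenvector constraint.
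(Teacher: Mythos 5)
Your proof is correct and follows essentially the same route as the paper: part (1) reduces to the dichotomy between a square-zero and an order-three nilpotent $\ad_W$ on $\g/\i$ (the paper isolates this as its Lemma 3.3, which you prove inline), and part (2) classifies transverse subalgebras by noting that $\l\cap\heis$ must be a $2$-plane containing $\ZZ$ whose second generator projects to an eigenvector of $\bar A$. The only cosmetic differences are that the paper first normalizes $\i=\R X$ by an automorphism of $\Heis$, while you work with a general non-central $W$ and verify the Lorentz signature of the invariant form explicitly.
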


 \begin{lemma}\label{lem}
 
 A nilpotent (non-zero) endomorphism $A$ of a $3$-dimensional linear space is an infinitesimal isometry of some Lorentz scalar product if and only if its nilpotency order equals $3$, that is, $A^3 = 0$ but $A^2 \neq 0$.
\end{lemma}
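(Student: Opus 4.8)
\emph{Reformulation.} Write $V$ for the $3$-dimensional space and $g$ for a symmetric nondegenerate form. Saying that $a$ is an infinitesimal isometry of $g$ means that $a$ is skew-adjoint, $g(ax,y)+g(x,ay)=0$ for all $x,y$, i.e.\ $a\in\so(g)$. Since $a$ is nilpotent, nonzero, and $\dim V=3$, its nilpotency order is either $2$ (so $a^2=0$) or $3$ (so $a^3=0\ne a^2$). The plan is to prove the two implications separately: first that order $2$ is incompatible with skew-adjointness, then that every order-$3$ nilpotent is skew-adjoint for an explicit Lorentz form.

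\emph{Necessity.} I would argue that a nonzero skew-adjoint nilpotent cannot satisfy $a^2=0$ (this step does not even use that $g$ is Lorentzian). Indeed, $a^2=0$ forces $\mathrm{im}\,a\subseteq\ker a$, hence $\rk a\le 3-\rk a$, so $\rk a=1$. Using $g$ to represent this rank-one map, write $a(x)=g(p,x)\,v$ with $p,v\ne 0$. The skew-adjointness identity then reads $g(p,x)g(v,y)+g(p,y)g(v,x)=0$ for all $x,y$; putting $y=x$ yields the quadratic identity $g(p,x)\,g(v,x)\equiv 0$. This is a product of two linear forms in $x$ that vanishes identically, so one factor is $0$, and nondegeneracy of $g$ forces $p=0$ or $v=0$ --- a contradiction. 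Hence $a^2\ne 0$, and since $a$ is nilpotent on a $3$-space, $a^3=0$; the order is exactly $3$.

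\emph{Sufficiency.} Assume $a^3=0$, $a^2\ne 0$, so $a$ is a single Jordan block. I would pick a cyclic vector $e_3$ with $a^2e_3\ne 0$ and set $e_2=ae_3$, $e_1=a^2e_3$, giving a basis with $ae_1=0$, $ae_2=e_1$, $ae_3=e_2$; in this basis $a$ is the standard nilpotent Jordan matrix $A$. Solving the linear system $A^{\mathsf T}G+GA=0$ for a symmetric Gram matrix $G$ shows that the compatible forms are exactly those with $g(e_1,e_1)=g(e_1,e_2)=g(e_2,e_3)=0$ and $g(e_2,e_2)=-g(e_1,e_3)$. Choosing $g(e_1,e_3)=1$, $g(e_2,e_2)=-1$ and the remaining entries zero gives
\[
G=\begin{pmatrix}0&0&1\\0&-1&0\\1&0&0\end{pmatrix},
\]
which has $\det G=1$ and eigenvalues $+1,-1,-1$, hence signature $(1,2)$: a Lorentz scalar product for which $a$ is skew-adjoint by construction (replacing $g$ by $-g$ delivers the $(2,1)$ convention if preferred).

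\emph{Main obstacle.} The only genuinely delicate point is the necessity direction, namely ruling out a rank-one skew-adjoint map; I expect the clean route is the ``product of two linear forms vanishes identically'' identity above, which keeps the argument signature-free and avoids invoking the classification of nilpotent orbits in $\so(1,2)\cong\sll(2,\R)$. That classification would give an alternative proof --- every nonzero nilpotent of $\so(1,2)$ is regular and acts as a single Jordan block in the standard representation --- but the elementary argument is shorter and self-contained. The sufficiency direction is a routine finite computation; the one thing to watch is confirming that the constructed $G$ is Lorentzian rather than definite, which is guaranteed here by the negative value $g(e_2,e_2)<0$ together with $\det G>0$.
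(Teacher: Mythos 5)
Your proof is correct and takes essentially the same route as the paper's: the sufficiency direction is the identical cyclic-basis/Jordan-block computation (the paper exhibits the form $-2\,dx\,dz+dy^2$, which is your $G$ up to an overall sign), and the necessity direction rests on the same key identity $g(a(x),x)=0$. You package it as the vanishing product of linear forms $g(p,x)\,g(v,x)\equiv 0$, whereas the paper observes directly that both $\ker(a)$ and its complement are orthogonal to $\mathrm{Im}(a)$, so the one-dimensional image lies in the radical of $g$ — the same contradiction with nondegeneracy.
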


\begin{proof}

 If $A$ has nilpotency order equals $3$ then, for $u$  generic, the system $\left\{ A^2(u), A(u), u  \right\}$
 is a basis in which $A$ has a matrix  
$ \left( \begin{array}{ccc}
0 & 1 & 0 \\ 
0& 0 & 1\\ 
0 & 0 & 0
\end{array} \right)$. This is skew symmetric with respect to the Lorentz form $-2dxdz+dy^2$.  Suppose now that $A$ has nilpotency order two and $g$ is an invariant Lorentz metric. Then $\operatorname{Im}(A)$ is one-dimensional. But, for arbitrary $X$ we have $$g(A(X),X)+g(X,A(X))=2g(A(X),X)=0.$$ 
So every $X$, not in $\ker(A)$, is orthogonal to $\operatorname{Im}(A)$, which is impossible since $g$ is non-degenerate.

  \end{proof}

\textit{Proof of} (1). $G$ preserves a Lorentz metric on $G/I$ if and only if the $\Ad(I)$ action on the quotient vector space $\g/ \i$  (identified to the tangent space at the base point $I$) preserves a Lorentz scalar product. Equivalently, the infinitesimal action $\ad_a$ is skew symmetric with respect to a Lorentz scalar product on  $\g / \i$, where $a$ generates $\i$. Consider a standard generating system $\left\{Z, X, Y\right\}$ of the Heisenberg algebra $\heis$ with bracket relation $Z = [X, Y]$. The Lie algebra $\g$ is generated by adding a fourth element $T$ (with brackets $[T,  W] \in \heis$, for any $W \in \heis$). Since all one-dimensional subspaces in $\Heis$ different from $\R Z$ are equivalent by automorphisms, we can assume $\i = \R X$. The action of $\ad_X$ on $\g/ \R X$ has nilpotency degree $3$ if and only if 
 $\ad_X(T)  \notin \R X \oplus \R Z$.   This proves the first part of the proposition.

\textit{Proof of} (2). If $\ad_T$ acting on $\heis / \R Z \cong \R X \oplus \R Y$ has a non-vanishing eigenvector $u$ then $u \notin \i \oplus \R Z$, since 
 we assume the existence of an $\ad_\i$-invariant scalar product. Thus, $\l = \R T \oplus \R u \oplus \R Z$ is a Lie algebra transverse to $\i$. Conversely, suppose that such $\l$ exists and consider its intersection $\l^\prime = \l \cap \heis$. Then $\l^\prime$ is a $2$-dimensional ideal in $\l$. Thus, $\l^\prime$ contains $Z$ and another element $u \in \R X \oplus \R Y$. It also contains and element $T^\prime = T + v$, $v \in \heis$. Now,
 $ [T, u] + [v, u]=[T^\prime, u]  \in \l^{'}=\R Z\oplus\R u $. Since $[v, u] \in \R Z$ then the class of $u$ is an eigenvector of $\ad_T$ acting on $\heis/ \R Z$. Therefore, this last endomorphism has a real spectrum. $\Box$

\subsection{Description of the Heisenberg extensions}
Let $\rho: \R \to \Aut(\heis)$ be a representation into the automorphism group of the Heisenberg algebra. This one-parameter group is of the form $\exp(tA_{\rho})$ where $A_{\rho}$ is a derivation of the Heisenberg algebra. Put $G_{\rho}=\Heis \rtimes_{\rho} \R$ the associated Lie group with Lie algebra, denoted by $\Lie(G_{\rho})$, generated by $Z, X,Y$ and $T$ such that $$[X,Y]=Z \ \ \text{and} \ \ [T,W]=A_{\rho}(W) \ \ \text{for every} \ \ W\in \heis. $$
  In particular, $[\Lie(G_{\rho}),\Lie(G_{\rho})]\subset \heis$ (more precisely, the derived subalgebra is generated by the span of the image of $A_{\rho}$ and $Z$ ). Let $V\in \heis$ be a non-central element. Hence $\ad_V:\Lie(G_{\rho})\to \Lie(G_{\rho})$ is nilpotent ($\heis$ is contained in the nil-radical and we have equality if $A_{\rho}$ is not nilpotent). Suppose that $\ad_V$ has nilpotency degree $3$ and let $I\subset G_{\rho}$ be the one-parameter subgroup (necessarily closed) tangent to $V$ and consider the homogeneous space $\P_{\rho}=G_{\rho}/I$. It follows from Proposition \ref{semi_direct_Heisenberg} that $\P_{\rho}$ possesses a $G_{\rho}$-invariant Lorentz metric. In what follows, we will show the uniqueness of $\P_{\rho}$.

  Observe that we always have $A_{\rho}(Z)=\alpha Z$ and $A_{\rho}$ induces a linear transformation $$\widetilde{A_{\rho}}: \heis/Z=\R^2\to \R^2$$
Where $\tr(\widetilde{A_{\rho}})=\alpha$  .
\subsubsection{The real diagonalizable case}\label{hyperbolic case} Suppose that $\widetilde{A_{\rho}}$ is diagonalizable with real eigenvalues. Then, up to conjugacy by automorphisms, we have $$A_{\rho}=\begin{pmatrix}
a+b &0 &0 \\ 
0 &a &0 \\
0 &0 &b
\end{pmatrix}$$ with respect to the basis $\left\{Z,X,Y\right\}$ where $a\neq b$ (since we are in the situation of the existence of elements in $\heis$ with nilpotency order $3$). We can assume $a\neq0$ (observe that in this case $G_{\rho}$ is unimodular if and only if $a=-b$). Furthermore, we can assume that $a=1$ because scaling $A_{\rho}$ doesn't change the one-parameter group. Thus, we have a one-parameter family of Lie algebras, denoted $\Lie(G_b)$ this time, given by the derivations $$A_{b}=\begin{pmatrix}
1+b &0 &0 \\ 
0 &1 &0 \\
0 &0 &b
\end{pmatrix}$$ for $b$ different from $1$ (observe that $b$ and $1/b$ give isomorphic algebras for $b\neq 0$). The brackets of this Lie algebra are given by $$[T,Z]=(1+b)Z, \ \ [T,X]=X, \ \ [T,Y]=bY, \ \ \text{and} \ \ [X,Y]=Z$$
Let now $V\in \heis\subset \Lie(G_b)$ be a non-central element such that $\ad_V$ has nilpotency order three. Put $V=\alpha X+\beta Y+\gamma Z$. We can assume, up to inner conjugacy in $\heis$, that $\gamma=0$ and observe that both $\alpha$ and $\beta$ must be different from zero otherwise $\ad_V$ has order two. In addition, all such $V$ are equivalent by automorphisms of $\Lie(G_b)$, it suffices to consider an automorphism $\varphi:\Lie(G_b)\to\Lie(G_b)$ given by $\varphi(X)=t_1X$, $\varphi(Y)=t_2Y$, $\varphi(Z)=t_1t_2Z$, and $\varphi(T)=T$.\\\\
Suppose now that $V=X+Y$ and put $m=\Span(T, Y^{\prime}=X+bY, Z)$. Then $m$ is $\ad_V$-invariant and we have $$\ad_V(T)=-Y^{\prime}, \ \ \ad_V(Y^{\prime})=(b-1)Z, \ \ \text{and} \ \ \ad_V(Z)=0$$
So giving a $G_b$-invariant Lorentzian metric on $P_b=G_b/I$, where $I=\exp(tV)$, is equivalent to giving a Lorentzian $\ad_V$-invariant inner product $g(.,.)$ on $m$. That is $$g(\ad_V(W),U)+g(W, \ad_V(U))=0$$ for every $W,U\in m$. By direct evaluations we find $$g(T,T)=\beta, \ \ g(Y^{\prime},Y^{\prime})=\alpha, \ \ g(T,Z)=\frac{\alpha}{b-1}$$ and $$g(Z,Z)=g(Y^{\prime},Z)=g(T,Y^{\prime})=0$$
where $\alpha$ is different from zero. But, up to automorphisms fixing $V$ and preserving $m$, we can assume $\beta=0$. It suffices to consider the automorphism $\varphi:\Lie(G_b)\to \Lie(G_b) $
defined by $\varphi(X)=X$, $\varphi(Y)=Y$, $\varphi(T)=T+\delta Z$, and $\varphi(Z)=Z$ with a suitable choice of $\delta$. This shows that the Lorentzian homogeneous space $P_b$ is unique up to isometry and scaling.
\subsubsection{The real parabolic case}\label{parabolic case} Suppose now that $\widetilde{A_{\rho}}$ has real non-zero eigenvalues but not diagonalizable. In this case, up to conjugacy by automorphisms, we have $$A_{\rho}=\begin{pmatrix}
2t &0 &0 \\ 
0 &t &s \\
0 &0 &t
\end{pmatrix}$$ with respect to the basis $\left\{Z,X,Y\right\}$ where $t$ and $s$ are non-zero. We can assume that $t=s=1$ up to scaling and automorphisms of the Heisenberg algebra. Thus, we have a Lie algebra $\Lie(G_{\rho})$  given by the derivation $$A_{\rho}=\begin{pmatrix}
2 &0 &0 \\ 
0 &1 &1 \\
0 &0 &1
\end{pmatrix}$$
The brackets of this Lie algebra are given by $$[T,Z]=2Z, \ \ [T,X]=X, \ \ [T,Y]=X+Y, \ \ \text{and} \ \ [X,Y]=Z$$
Let $V=\alpha X+\beta Y+\gamma Z$ be a non-central element whose nilpotency order is $3$. We can assume, up to inner conjugacy in $\heis$, that $\gamma=0$. Observe that $\beta\neq 0$ otherwise $\ad_V$ has order two. In addition, all such $V$ are equivalent by automorphisms of $\Lie(G_{\rho})$, it suffices to consider $\varphi:\Lie(G_{\rho})\to\Lie(G_{\rho})$ given by $\varphi(X)=X$, $\varphi(Y)=Y+tX$, $\varphi(Z)=Z$, and $\varphi(T)=T$ with a suitable value of $t$.\\\\
Suppose now that $V=Y$ and put $m=\Span(T, Y^{\prime}=X+Y, Z)$. Then $m$ is $\ad_V$-invariant and we have $$\ad_V(T)=-Y^{\prime}, \ \ \ad_V(Y^{\prime})=-Z, \ \ \text{and} \ \ \ad_V(Z)=0$$ Suppose $g(.,.)$ is a Lorentzian $\ad_V$-invariant inner product on $m$. That is $$g(\ad_V(W),U)+g(W, \ad_V(U))=0$$ for every $W,U\in m$. By evaluations we find $$g(T,T)=\beta, \ \ g(Y^{\prime},Y^{\prime})=\alpha, \ \ g(T,Z)=-\alpha$$ and $$g(Z,Z)=g(Y^{\prime},Z)=g(T,Y^{\prime})=0$$
where $\alpha$ is different from zero. We can also assume $\beta=0$ by considering the automorphism $\varphi:\Lie(G_{\rho})\to \Lie(G_{\rho}) $
defined by $\varphi(X)=X$, $\varphi(Y)=Y$, $\varphi(T)=T+\delta Z$, and $\varphi(Z)=Z$ with a suitable choice of $\delta$. This shows, similar to the previous cases, that the Lorentzian homogeneous space $\P_{\rho}=G_{\rho}/\exp(tV)$ is unique up to isometry and scaling.
\subsubsection{The non-real case}\label{elliptic case} Consider the case where $\widetilde{A_{\rho}}$ has complex eigenvalues. In this case $A_{\rho}$ preserves a plane transverse to the central direction and acting on it by a similarity. Since all transverse planes are equivalent by Heisenberg inner automorphisms, we can assume that, up to scaling, $$A_{\rho}=\begin{pmatrix}
2\cos(\theta) &0 &0 \\ 
0 &\cos(\theta) &-\sin(\theta) \\
0 &\sin(\theta) &\cos(\theta)
\end{pmatrix}$$ with respect to the basis $\left\{Z,X,Y\right\}$ where $\theta\in \left(0,\pi\right)$. Here, one thinks of $A_{\rho}$ acting on the plane $\Span(X,Y)$ as a multiplication by a complex number in the upper half of the unit circle. But, in order to simplify the brackets, we can scale and assume that $A_{\rho}$ acts on $\Span(X,Y)$ by multiplying by a complex number of the form $i+c$ with $c\in \R$. Thus, we have a one-parameter family of Lie algebras, which we denote by $\Lie(G_{c})$, given by the derivation (observe also that one can take only $c\geq 0$) $$A_{c}=\begin{pmatrix}
2c &0 &0 \\ 
0 &c &-1 \\
0 &1 &c
\end{pmatrix}$$
The brackets of this Lie algebra are given by  $$[T,Z]=2cZ, \ \ [T,X]=Y+cX, \ \ [T,Y]=-X+cY, \ \ \text{and} \ \ [X,Y]=Z$$
As before, if $V=\alpha X+\beta Y+\gamma Z$ is a non-central element then we can always assume that $\gamma=0$. All such $V$ are equivalent to $V=X$ by automorphisms $\varphi:\Lie(G_c)\to\Lie(G_c)$ of the form $\varphi(X)=\alpha X+\beta Y$, $\varphi(Y)=-\beta X+\alpha Y$, $\varphi(Z)=(\alpha^2+\beta^2)Z$, and $\varphi(T)=T$ which send $X$ to $V$.\\\\
Suppose now that $V=X$ and $m=\Span(T, Y^{\prime}=Y+cX, Z)$. Then $m$ is $\ad_V$-invariant and we have $$\ad_V(T)=-Y^{\prime}, \ \ \ad_V(Y^{\prime})=Z, \ \ \text{and} \ \ \ad_V(Z)=0$$ Suppose $g(.,.)$ is a Lorentzian $\ad_V$-invariant inner product on $m$. By evaluations, as before, we find $$g(T,T)=\beta, \ \ g(Y^{\prime},Y^{\prime})=\alpha, \ \ g(T,Z)=\alpha$$ and $$g(Z,Z)=g(Y^{\prime},Z)=g(T,Y^{\prime})=0$$
where $\alpha$ is different from zero. By considering an automorphism $\varphi:\Lie(G_b)\to \Lie(G_b) $
of the form $\varphi(X)=X$, $\varphi(Y)=Y$, $\varphi(T)=T+\delta Z$, and $\varphi(Z)=Z$ we can assume $\beta=0$. This shows the uniqueness of the Lorentzian homogeneous space $P_c=G_{c}/\exp(tV)$ up to isometry and scaling.

 \subsubsection{The nilpotent case}\label{nilpotent case} Lastly, suppose that $\widetilde{A_{\rho}}$ has only zero eigenvalues but not diagonalizable. We have in this case that, up to scaling, conjugacy by automorphisms, and adding a derivation of the form $\ad_u$ with $u\in \heis$ $$A_{\rho}=\begin{pmatrix}
0 &0 &0 \\ 
0 &0 &1 \\
0 &0 &0
\end{pmatrix}$$ with respect to a basis $\left\{Z,X,Y\right\}$. 
The brackets of this Lie algebra are given by $$[T,Y]=X, \ \ \text{and} \ \ [X,Y]=Z$$
Let $V=\alpha X+\beta Y+\gamma Z$ be a non-central element whose nilpotency order is $3$. We can always assume, up to inner conjugacy in $\heis$, that $\gamma=0$. Observe that $\beta\neq 0$ otherwise $\ad_V$ has order two. In addition, all such $V$ are equivalent by automorphisms of $\Lie(G_{\rho})$, it suffices to consider $\varphi:\Lie(G_{\rho})\to\Lie(G_{\rho})$ given by $\varphi(X)=\beta X$, $\varphi(Y)=\beta Y+\alpha X$, $\varphi(Z)=\beta^2 Z$, and $\varphi(T)=T$ which sends $Y$ to $V$.\\\\
Suppose now that $V=Y$ and put $m=\Span(T, X, Z)$. Then $m$ is $\ad_V$-invariant and we have $$\ad_V(T)=-X, \ \ \ad_V(X)=-Z, \ \ \text{and} \ \ \ad_V(Z)=0$$ Suppose $g(.,.)$ is a Lorentzian $\ad_V$-invariant inner product on $m$. That is $$g(\ad_V(W),U)+g(W, \ad_V(U))=0$$ for every $W,U\in m$. By evaluations we find $$g(T,T)=\beta, \ \ g(X,X)=\alpha, \ \ g(T,Z)=-\alpha$$ and $$g(Z,Z)=g(X,Z)=g(T,X)=0$$
where $\alpha$ is different from zero. We can also assume $\beta=0$ by considering the automorphism $\varphi:\Lie(G_{\rho})\to \Lie(G_{\rho}) $
defined by $\varphi(X)=X$, $\varphi(Y)=Y$, $\varphi(T)=T+\delta Z$, and $\varphi(Z)=Z$ with a suitable choice of $\delta$. This shows that the Lorentzian homogeneous space $\P_{\rho}=G_{\rho}/\exp(tV)$ is unique up to isometry and scaling.

 \section{Proof of Theorem  \ref{classification}  in the solvable case.}\label{solv}

Recall that we have shown in Proposition \ref{isotropy} that if $(M,g)$ is a homogeneous simply connected Lorentzian manifold with $\dim(\Isom(M))\geq 5$, then $M$ has constant sectional curvature. In fact, one shows in this case that $M$ is complete (see Theorem \ref{locally_symmetric}). Now , since the case $\dim(\Isom(M))=3$ corresponds to the fact that $M$ is a left-invariant metric on $\Isom_0(M)$, then all what remains  to consider is the case $\dim(\Isom(M))=4$.

 \begin{proposition} \label{transverse_subgroup} Let $G/I$  be a Lorentz space with 
 $G$ solvable and has  dimension $4$. Then there exists a Lie subalgebra $\l$ transverse to $\i$ 
 if and only if $G$ is not a non-real semi-direct product $\R \ltimes \Heis$ with $I\subset\Heis$.

 \end{proposition}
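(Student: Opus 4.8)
The plan is to argue entirely at the Lie algebra level. Since $G/I$ is three-dimensional and $\dim G = 4$, the isotropy algebra is a line $\i = \R a$, and the existence of the $G$-invariant Lorentz metric means precisely that $\ad_a$ acts on $\g/\i \cong \R^{1,2}$ as a non-zero element of $\so(1,2)$ (non-zero by faithfulness of the isotropy representation). For the \emph{only if} direction I would simply invoke Proposition \ref{semi_direct_Heisenberg}(2): once the metric exists, a non-real semi-direct product $\R\ltimes\Heis$ admits no three-dimensional subalgebra transverse to a non-central line $\i\subset\heis$. The whole substance is therefore the converse, namely: assuming $G$ is not a non-real Heisenberg extension, produce a transverse subalgebra $\l$.

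The first reduction is a cheap dichotomy according to whether $a$ lies in the derived algebra $[\g,\g]$. Suppose $a\notin[\g,\g]$. As $\g$ is solvable we have $[\g,\g]\subsetneq\g$ with $\g/[\g,\g]$ abelian, and since the class of $a$ is non-zero there I can choose a hyperplane of $\g/[\g,\g]$ avoiding it and pull it back to a codimension-one subspace $\l\supseteq[\g,\g]$ with $a\notin\l$. Then $[\l,\l]\subseteq[\g,\g]\subseteq\l$, so $\l$ is a subalgebra, and it is transverse to $\i=\R a$ by construction. This already settles every case in which $\ad_a$ has a non-zero semisimple part, because an element whose adjoint is not nilpotent lies outside the nilradical $\n$, hence outside $[\g,\g]\subseteq\n$.

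It remains to treat $a\in[\g,\g]$. Here $a\in\n$, so $\ad_a$ is nilpotent, and by Lemma \ref{lem} the metric forces $\ad_a|_{\g/\i}$ to have nilpotency order exactly three. The core of the argument is to show that this order-three constraint pins the nilradical down to $\heis$. I would run through $\dim\n\in\{2,3,4\}$ and eliminate all other possibilities. If $\n$ is abelian ($\n=\R^2$ or $\n=\R^3$), then $\ad_a$ annihilates $\n$ and carries all of $\g$ into $[\g,\g]\subseteq\n$, whence $(\ad_a)^2=0$, contradicting order three. If $\g$ is itself nilpotent ($\dim\n=4$), the only four-dimensional nilpotent algebras possibly carrying a non-central $a\in[\g,\g]$ are $\R\oplus\heis$, where $[\g,\g]=\ZZ$ is central and forces $\ad_a=0$, and the filiform algebra, where $a\in[\g,\g]$ makes $\ad_a$ of rank at most one; both violate order three. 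Hence $\dim\n=3$ with $\n=\heis$, so $\g=\R\ltimes\heis$ is a Heisenberg extension, and the non-triviality of $\ad_a|_{\g/\i}$ forces $\i$ to be a non-central line of $\heis$, exactly the configuration of Proposition \ref{semi_direct_Heisenberg}.

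At this point the metric exists, so Proposition \ref{semi_direct_Heisenberg}(2) applies verbatim and produces a transverse three-dimensional subalgebra precisely when $G$ is real. Since by hypothesis $G$ is not the non-real Heisenberg extension, such an $\l$ exists, which together with the abelianization step of the second paragraph completes the converse and hence the equivalence. The main obstacle is clearly the nilradical elimination of the third paragraph: the entire argument turns on converting the Lorentzian order-three condition of Lemma \ref{lem} into the statement that the nilradical can only be $\heis$, and on verifying that no exotic four-dimensional solvable structure evades this. Once that is secured, the proposition is merely a repackaging of Proposition \ref{semi_direct_Heisenberg}(2) with the elementary derived-algebra trick.
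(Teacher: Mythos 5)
Your proof is correct and follows essentially the same route as the paper's: the same dichotomy on whether $\i\subset[\g,\g]$, the same pull-back of a hyperplane in the abelianization when it is not, and the same reduction via Lemma \ref{lem} and Proposition \ref{semi_direct_Heisenberg}(2) when it is. The only divergence is the intermediate identification of the Heisenberg structure: where you classify four-dimensional nilpotent algebras to pin down the nilradical, the paper argues more directly that $[\g,\g]$ itself is nilpotent, of dimension at most three, and non-abelian (since $a\in[\g,\g]$ with $[\g,\g]$ abelian would give $\ad_a^2=0$, contradicting the order-three condition of Lemma \ref{lem}), hence already equal to $\heis$.
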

 \begin{proof}
 Observe first that $\i$ is not central since we always assume that the action of $G$ on $G/I$ is faithful. Let $[\g,\g]$ be the derived subalgebra. Since $\g$ is solvable we have that $[\g,\g]$ is a proper subalgebra of $\g$, and we have the natural projection $\pi: \g \to \g/[\g,\g]=\R^k$ for $k\geq 1$. Suppose that $\i$ is not contained in $[\g,\g]$, then $\i$ projects injectively to a line $\pi(\i)$ inside $\R^k$. Let $H\subset \R^k$ be any hyperplane transverse to $\pi(\i)$ (if $k=1$ then $H$ is reduced to $\left\{0\right\}$). Hence, $\pi^{-1}(H)$ is an ideal transverse to $\i$ in $\g$ which proves the proposition in this case.\\\\
 So, it remains to consider the case where $\i\subset [\g,\g]$. For $a\in \i-\left\{0\right\}$ we have that $\ad_a$ is nilpotent (recall that $\ad_a\neq 0$ since $\i$ is not central). If $[\g,\g]$ is abelian then $\ad_a^2 = 0$ which implies that $\ad_a$ can not infinitesimally preserve a Lorentz product (by Lemma \ref{lem}). So $[\g,\g]$ is non-abelian. Being nilpotent and non-abelian, $[\g,\g]$ is therefore the Heisenberg algebra and the group $G$
 is then a semi-direct product $\R \ltimes \operatorname{Heis}$ with $I\subset G$ is a non-central one-parameter subgroup of $\Heis$, which is already treated in Proposition \ref{semi_direct_Heisenberg}. This completes the proof of Proposition \ref{transverse_subgroup}. 
  \end{proof}
  \begin{remark}\label{ideal_supplementary} Observe from the proof that in all cases where $G$ is not a semi-direct product of $\Heis$, we found 
  $\l $ transverse to $\i$ which is an ideal.
  
  \end{remark}

 \subsection{Global identification}\label{global product}

 So far, we proved the existence of a sub-algebra $\l$ transverse to $\i$ if $G$ is not a non-real semi-direct product of $\Heis$. But, we want to prove that globally 
 $G/I$ is identified to $L$, where $L$ is the group determined by $\l$.  This essentially means that $L$ acts 
 transitively on $G/I$, whereas the transversality means that the $L$-orbit of the point $I$ in $G/I$ is open. It is likely, in our setting here (e.g. $G$ solvable and
 $G/I$ of Lorentz type), that any open orbit necessarily equals the full space. We will prove this for the $L$ we found in the proofs of Propositions \ref{transverse_subgroup} and \ref{semi_direct_Heisenberg}. 
 
 First, let us notice that for simply connected groups of dimension $4$, any connected subgroup is closed (see the discussion after the proof of Proposition \ref{isotropy}).  Let $L$ be the subgroup determined by the subalgebra $\l$.  If $L$ is normal, then $G/L$ is isomorphic to $\R$ as a group, and $I$ projects to it isomorphically. That is, $G/L = I$ or equivalently $G= L I$, and $I \cap L = \left\{1\right\}$. In other words, $L$ acts freely transitively on $G/I$.
 
 From Remark \ref{ideal_supplementary}, we can find $L$ normal unless $G$ is a semi-direct product 
 $\R \ltimes \Heis$.  In this case, let $I$ be the one-parameter group $\left\{\exp (t X), t \in \R\right\} \subset \Heis$ and the group $L$ transverse to $I$ as constructed in the proof of Proposition \ref{semi_direct_Heisenberg}. The orbit of a point $gI\in G/I$ under the action of $L$ is open if and only if $L$ is transverse to $\text{Stab}(gI)=gIg^{-1}$ where $\text{Stab}(gI)$ is the stabilizer of $gI$. We have, as in the proof of Proposition \ref{semi_direct_Heisenberg}, that $\l = \R T \oplus \R u \oplus \R Z$ and $\R u \oplus \R Z$ is an ideal in $\g$. So the conjugates of $X$ are never inside $\l$ since they are never inside the ideal $\R u \oplus \R Z=\l \cap \heis$. This shows that $L$ is always transverse to the stabilizer of any point in $G/I$ which implies that the $L$-orbit of every point $gI\in G/I$ is open. Hence $L$ acts transitively locally freely on $G/I$ because $G/I$ is connected and the orbits are open. By simple-connectedness, we get that the $L$-action is in fact free. 
$\Box$
\begin{theorem} \label{transverse_subgroup_global}  Let  $G$ be a  simply connected solvable group of  dimension $4$ and $G/I$ a Lorentz  $3$-dimensional $G$ homogeneous space. Then there exists a simply connected $3$-dimensional group $L$ acting isometrically, freely, and transitively on $G/I$ if and only if $G$ is not a non-real semi-direct product $\R \ltimes \Heis$ with $I\subset\Heis$.

 \end{theorem}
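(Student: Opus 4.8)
The plan is to read Theorem \ref{transverse_subgroup_global} as the global counterpart of the infinitesimal Proposition \ref{transverse_subgroup}, and to prove it by assembling that statement with the global identification argument of \S\ref{global product}, treating the two implications separately.

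For the ``if'' direction I would start from the hypothesis that $G$ is not a non-real semi-direct product $\R \ltimes \Heis$. Proposition \ref{transverse_subgroup} then supplies a $3$-dimensional subalgebra $\l \subset \g$ transverse to $\i$; let $L$ denote the connected subgroup it integrates. Since $G$ is simply connected of dimension $4$, the fact recalled earlier that every connected subgroup is then closed (\ref{closed}) shows $L$ is closed, and as a closed connected subgroup of a simply connected solvable group it is itself simply connected. The task is to promote the transversality $\l \oplus \i = \g$, which only gives an open $L$-orbit through the base point, to a genuine free and transitive action. Here I would split into the two cases already isolated: when $\l$ can be taken to be an ideal (by Remark \ref{ideal_supplementary}, every case except $G = \R \ltimes \Heis$), the subgroup $L$ is normal, $G/L \cong \R$, and $I$ maps isomorphically onto this quotient, so $G = LI$ with $L \cap I = \{1\}$ and $L$ acts simply transitively. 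In the remaining (necessarily real) case $G = \R \ltimes \Heis$ with $\i = \R X$, I would invoke the explicit $\l = \R T \oplus \R u \oplus \R Z$ of Proposition \ref{semi_direct_Heisenberg}, whose intersection $\R u \oplus \R Z = \l \cap \heis$ is an ideal: since no conjugate of $X$ lies in that ideal, $\l$ stays transverse to every stabilizer $gIg^{-1}$, so every $L$-orbit is open, connectedness forces transitivity and local freeness, and simple connectedness of $G/I$ upgrades local freeness to freeness.

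For the ``only if'' direction I would argue by contraposition, assuming $G$ is a non-real semi-direct product and excluding any such $L$. If $L$ is realized inside $G$, a dimension count $\dim \l + \dim \i = 4 = \dim \g$ shows that transitivity of $L$ forces $\Lie(L)$ to be transverse to $\i$, contradicting Proposition \ref{transverse_subgroup}(2) outright, so this case is immediate. The delicate point, and the step I expect to be the main obstacle, is ruling out an $L$ sitting inside the full isometry group $\Iso(G/I)$ rather than inside $G$: a priori the action need not factor through $G$, and excluding a transverse $3$-dimensional isometric action coming from outside $G$ is precisely what separates these elliptic examples from left-invariant metrics on Lie groups. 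At this stage I would either restrict the statement to subgroups arising from $\g$, or argue that the similarity action with non-real eigenvalues on $\heis/\ZZ$ leaves no room for a transverse $3$-dimensional isometric action, deferring the complete verification to the isometry-group computation carried out later (cf.\ Theorem \ref{Theorem 1.1}(2),(6)).
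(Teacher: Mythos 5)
Your proof of the ``if'' direction is essentially identical to the paper's argument in \S\ref{global product}: the same appeal to Proposition \ref{transverse_subgroup}, the same normality argument via Remark \ref{ideal_supplementary} giving $G=LI$ with $L\cap I=\{1\}$, and the same explicit check in the real $\R\ltimes\Heis$ case that $\l\cap\heis$ is an ideal avoiding all conjugates of $X$, so every $L$-orbit is open. On the ``only if'' direction, the difficulty you flag is real but is resolved exactly by your second alternative, which is also how the paper implicitly closes it: in the non-real case $\P_\rho$ is non-flat, so its Killing algebra equals $\Lie(G_\rho)$ (Lemma \ref{isomorphism}), hence any connected $3$-dimensional $L$ acting isometrically and freely embeds into $\Iso^0(G/I)=G_\rho$ and Proposition \ref{semi_direct_Heisenberg}(2) forbids a transverse subalgebra; you should commit to that route rather than weakening the statement to subgroups of $G$.
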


 \section{Proof of Theorem \ref{classification} in the non-solvable case.}\label{non-solv}

  Let $G/I$ be a $3$-dimensional homogeneous Lorentz space with $G$ of dimension $4$, non-solvable, and simply connected. 
  Necessarily, for the same reason that we mentioned in the Riemannian case, $G$ is isomorphic to $\R \times S$, with $S = \widetilde{\SL}(2, \R)$ or $\SU(2)$. The isotropy group $I$ cannot be 
   the factor $\R$ since it is central (and the $G$-action would be non-faithful). If $I$ is not contained in $S$, then $S$ acts freely transitively and isometrically on $G/I$ since $S$ is normal and transverse to $I$. In other words, the Lorentz space $G/I$ is identified with a left-invariant metric on $S$.\\\\
  It remains to consider the case $I \subset S$ i.e. $G/I = \R \times (S / I)$.  In this case, $\g/ \i= \R  \oplus (\s/\i)$. Consider the case $S=\widetilde{\SL}(2, \R)$. If $\i \subset \mathfrak{sl}(2, \R)$ is nilpotent, then it has nilpotency of order $2$ when acting on $\s/ \i$ (since the dimension of this space is 2) and, hence, has the same order when acting on $\R \times (\s/\i)$. Therefore, it cannot infinitesimally preserve a Lorentz product. It follows that 
 $I$ must be hyperbolic or elliptic. In both cases, the factor $\R$ is exactly the trivial factor 
 of the $I$-action. Thus, it is orthogonal to $\s/ \i$ (observe that $\R$ is not null).\\\\
 One concludes that $G/I$ is a direct product $\R \times (S/ I)$. The metric on $\R$ can be positive or negative.
 As for $S/I$, it can be the sphere $\mathbb S^2$ if $S=\SU(2)$, the hyperbolic plane $\mathbb H^2$ or the (anti) de Sitter plane $ \mathbb DS^2 =  \widetilde{\SO}_0(1, 2)/\SO_0(1, 1)$ if $S=\widetilde{\SL}(2,\R)$.  In particular, the space $G/I$ is symmetric.

 \begin{remark} In the case of $G = \R \times \SU(2)$, the unique $3$-dimensional subgroup of $G$ is $\SU(2)$. Hence, if the isotropy group $I$ is included in $\SU(2)$ then there is no Lie subgroup of $G$ transverse to it, that is, there is no $3$-dimensional Lie subgroup of $G$ having an open orbit. Whereas in the case of $G = \R \times \widetilde{\SL}(2, \R)$ and $I\subset \widetilde{\SL}(2, \R) $ hyperbolic, the subgroup $L = \R \times \Aff$, where $\Aff$ is the group of upper triangular matrices,  has an open orbit in $G/I$ but it does not act transitively.
\end{remark} 

 \section{Homogeneous locally symmetric spaces, proof of Theorem \ref{locally_symmetric}.}\label{proof-loc-sym}
\subsection{} Let $M$ be modeled on $X$ and homogeneous. Let us begin with assuming that $M$ is simply connected.  Then it has a developing map $d: M \to X$, equivariant with respect to a local isomorphism  $\psi: G \to L \subset \Isom(X)$ (that is, $d\circ g=\psi(g)\circ d$ for every $g\in G$). The $d$-image is an open  $L$-orbit. Conversely, any open orbit of a subgroup of $\Isom(X)$ is a homogeneous space modeled on $X$.\\\\  
Therefore, the proof of Theorem \ref{locally_symmetric} reduces to the study of $L$-orbits, for $L$ a subgroup of dimension $\geq 4$, essentially $\dim L = 4$, since the $5$-dimensional case follows. Observe first that in the case where  $X $ is symmetric, but not of constant curvature,
 $\dim (\Isom(X)) = 4$
 and there is nothing to prove, and  hence, we will assume that $X$ has constant curvature.

\subsection{} \label{curve} The stabilizer of a line $\R v \in T_xX$ in $\SO(T_x X, g_x) = \SO(1, 2)$ has dimension $2$ if $v$ is isotropic, and dimension $1$ otherwise. It follows that a curve can be preserved by a subgroup $\subset \Isom(X)$ of dimension at most $3$. Hence, if $\dim (L) = 4$ then any  $L$-orbit is either a surface or is open.\\ 
Similarly, the stabilizer of a plane $F  \subset T_xX$ has dimension $1$ unless $F$ is lightlike. It follows that if a connected surface $\Sigma$ is preserved by a subgroup $L \subset \Isom(X)$  of dimension $4$, then it is lightlike and homogeneous. 
\subsection{Action of $\Aff \subset \SO(1, 2)$ near $0$ in $\R^{1, 2}$} Write the metric of $\R^{1, 2} $ as $2 du dv + dx^2$. The subgroup $H \subset \SO(1, 2)$ preserving the direction $\frac{ \partial}{\partial u} = (1, 0, 0)$ is isomorphic to the affine group $\Aff$. In fact any subgroup
 in $\SO(1, 2) $ of dimension $2$ is conjugate to $H$ and equals the stabilizer of an isotropic direction.\\\\
Let $\Sigma_0 $ be the lightlike  affine $2$-plane tangent to  $\frac{ \partial}{\partial u}^\perp=  \R \frac{ \partial}{\partial u} \oplus \R \frac{ \partial}{\partial x} $ at $0$.  So, $\Sigma_0 = \left\{ v = 0\right\}$.  The $H$-orbits of the $\Sigma_0$-points are $\left\{0\right\}$, $\R(\frac{ \partial}{\partial u})\setminus\left\{0\right\}$, and (null) affine lines parallel to $\frac{ \partial}{\partial u}$ defined explicitly by $\left\{ v, = 0, x = a \right\}$ (for $a \in \R$). A level $E^-_r$ defined by $$ 2 u v + x^2 = - r^2, \ \ r \neq 0$$ is homothetic to a hyperbolic plane, and $H$ acts on it transitively. Similarly, a level $E^+_r$ given by $$ 2 u v + x^2 = + r^2, \ \ r \neq 0$$ corresponds to a de Sitter plane.  It intersects $\Sigma_0 $  on the two lines $\left\{ v = 0, x = \pm r\right\}$. Then  $E^+_r$ minus these two lines is an $H$-orbit. Finally $E_0 - \Sigma_0$ is a $H$-orbit. This is the light cone with the affine line tangent to $ \frac{ \partial}{\partial u}$ removed.\\\\
From all this, we infer that if a lightlike surface is $H$-invariant, then it is either $E_0 - \Sigma_0$ or an open subset of $\Sigma_0$. 
\subsection{$2$-dimensional $L$-orbits are lightlike and geodesic} Let $\Sigma$ be a $L$-orbit of dimension $2$. It is, thus, lightlike and has isotropy of dimension $2$.  In the Minkowski case, we can assume $0 \in \Sigma$ and its isotropy is $H$. From our previous analysis, since $\Sigma$ is lightlike and $H$-invariant, $\Sigma$ is $E_0 - \Sigma_0$ or contained in $\Sigma_0$.\\\\
Now, the subgroup of isometries preserving the cone $E_0$ is  exactly  $\SO(1, 2)$, and in particular $E_0 - \Sigma_0$ can not be preserved by a $4$-dimensional group. It follows that $\Sigma$ is open in $\Sigma_0$ and hence geodesic.  In fact, the subgroup of elements preserving $\Sigma_0$ is $H \ltimes T$, where $T \cong \R^2$ acts by translation on $\Sigma_0$. It has dimension $4$ and, thus, coincides with $L$.  We then see that $L$ has $\Sigma_0$ as a unique $2$-dimensional orbit. It has two half-Minkowski spaces as open orbits.
\subsection{}In the non-flat case, one uses the exponential map at a point $p$ to get a similar picture as that in Minkowski. One also uses Gauss lemma to see that the metric levels at $p$ are orthogonal to the geodesic rays through $p$ and hence are not lightlike, unless in the light cone. Similarly, one concludes that $L$ is the stabilizer of a lightlike geodesic surface $\Sigma$ in $X$.\\\\
The de Sitter space $\operatorname{dS}_3$ has a model $2  uv + x^2 + y^2=1$ in $\R^4$.  A lightlike geodesic surface is given by $\left\{ (u, 0, x, y) / x^2 + y^2 = +1 \right\}$. Its stabilizer $L$ is the stabilizer of the direction $\R. (1, 0, 0, 0)$ in $\SO(1, 3)$. Thus, the exterior of $\Sigma$ consists of two open orbits.\\\\
Consider now the anti de Sitter space $\operatorname{AdS}_3$. It is represented as the level $ 2 u v + x^2 - y^2 = -1$. Its intersection with a lightlike hyperplane in $\R^{2, 2}$ is up to isometry $\left\{ (u, 0, x, y) / x^2 - y^2 = -1\right\}$.  It has two connected components, and $\Sigma$ is one of them.  In other words, the stabilizer of $\Sigma$ has exactly another $2$-dimensional orbit, and has $2$ open orbits.
\subsection{Case where $M$ is not simply connected} In the flat case, the group $L$ preserving a lightlike geodesic surface is, up to conjugacy, $L= \Aff \ltimes \R^2$. If $M$, a flat homogeneous manifold with $\dim \Isom(M) = 4$, is not simply connected, then, its isometry group is a non-trivial central quotient of $L$. But $L$ has trivial center, and thus $M$ has to be simply connected.\\\\
In the case of $\operatorname{dS}_3$, the corresponding $L$ is a maximal parabolic subgroup of $\SO(1, 3)$, which is isomorphic to $(\R \times \SO(2)) \ltimes \R^2$. Here, too, $L$ has a trivial center. The same conclusion applies to $\operatorname{AdS}_3$ as $L$ in this case is isomorphic to $\Aff \times \Aff$. $\Box$
\section{Proof of Theorem \ref{Theorem 1.1}.}\label{prof 1.1}
\begin{lemma}\label{symmetric}
    Suppose that the representation $\rho:\R\to \Aut(\Heis)$ is unimodular. Then $\P_{\rho}$ is globally symmetric. Furthermore, if $\rho$ is unipotent then $\P_{\rho}$ is isometric to Minkowski, and if $\rho$ is hyperbolic (resp. elliptic) then $\P_{\rho}$ is a Cahen-Wallach space.
\end{lemma}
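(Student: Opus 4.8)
The plan is to convert the unimodularity hypothesis into a purely algebraic normal form and then realise $\P_\rho$ as a pseudo-Riemannian symmetric space by producing an explicit symmetric pair on $\g = \Lie(G_\rho)$.

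First I would record the equivalence between unimodularity and the generator $Z$ of $\ZZ$ being central in all of $\g$. Writing $A_\rho(Z) = \alpha Z$ and using that $A_\rho$ is a derivation gives $\tr(\widetilde{A_\rho}) = \alpha$, so the trace of $A_\rho$ on $\heis$ is $2\alpha$. Since $[\g,\g] \subset \heis$, every $\ad_W$ with $W \in \heis$ is traceless, while $\tr(\ad_T) = \tr(A_\rho) = 2\alpha$; hence $G_\rho$ is unimodular iff $\alpha = 0$, i.e. iff $A_\rho(Z) = 0$, equivalently $Z$ is central in $\g$ and $\widetilde{A_\rho}$ is traceless on $\heis/\ZZ$. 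Under $\alpha = 0$ the operator $\widetilde{A_\rho}$ is, up to scaling and automorphism, of exactly one of three types: nilpotent (the nilpotent case \S\ref{nilpotent case}, $\rho$ unipotent), real-diagonalisable with eigenvalues $\pm 1$ (the case \S\ref{hyperbolic case} with $b=-1$, $\rho$ hyperbolic), or with eigenvalues $\pm i$ (the oscillator case \S\ref{elliptic case} with $c=0$, $\rho$ elliptic). In each case \S\ref{Heis} already supplies explicit brackets, a generator $V$ of $\i$, an $\ad_V$-invariant complement $\m$, and the invariant Lorentz product on $\m$.

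Next, for each of the three normal forms I would introduce the linear map $\sigma$ of $\g$ equal to $+\mathrm{id}$ on $\i = \R V$ and $-\mathrm{id}$ on $\m$, and verify that $\sigma$ is a Lie-algebra involution, which amounts exactly to checking $[\m,\m] \subset \i$ (the inclusions $[\i,\i]\subset\i$ and $[\i,\m]\subset\m$ being immediate). These are one-line bracket computations: in the unipotent case $\m = \Span(T,X,Z)$ is abelian, in the hyperbolic case $[T, X-Y] = X+Y = V$, and in the elliptic case $[T, Y] = -X = -V$. Since $\sigma|_\m = -\mathrm{id}$ preserves the scalar product, $(\g, \sigma)$ with the invariant metric is a symmetric triple, so $\P_\rho$ is locally symmetric. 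To upgrade this to global symmetry I would use that $G_\rho$ is simply connected: $\sigma$ integrates to $\Sigma \in \Aut(G_\rho)$ which fixes the closed connected subgroup $I = \exp(\R V)$ (because $\sigma(V) = V$), hence descends to an isometric involution $\bar\Sigma$ of $\P_\rho$ with $d\bar\Sigma = -\mathrm{id}$ at the base point; this $\bar\Sigma$ is the geodesic symmetry there, and translating by $G_\rho$ gives geodesic symmetries everywhere. As $I \cong \R$ is closed and $G_\rho$ simply connected, $\P_\rho$ is simply connected, so it is a genuinely globally symmetric space.

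Finally I would identify the isometry type from the base-point curvature, which for a symmetric space is $R(A,B)C = -[[A,B],C]$ on $\m$. In the unipotent case $[\m,\m] = 0$, so the transvection algebra is abelian and $\P_\rho$ is flat; being a simply connected complete flat Lorentzian $3$-space, it is Minkowski space. In the hyperbolic and elliptic cases $[\m,\m] = \i$, so the transvection algebra is the whole $4$-dimensional solvable $\g$ and $\P_\rho$ is an indecomposable non-flat symmetric space, which in Lorentzian dimension $3$ must be Cahen-Wallach; the base-point curvature is carried by the single bracket $[\m,\m]\subset\i$, and whether $\widetilde{A_\rho}$ has real or imaginary eigenvalues — the very definition of $\rho$ being hyperbolic or elliptic — fixes the sign of the resulting transverse curvature operator, i.e. the sign of the $\pm x^2$ term in the Cahen-Wallach form $g = 2\,du\,dv \pm x^2\,du^2 + dx^2$ recalled in \S\ref{coor}. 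Comparing symmetric triples then identifies $\P_\rho$ with the hyperbolic, respectively elliptic, Cahen-Wallach space. The two steps I expect to carry the real weight are the bridge in the second paragraph, turning the analytic unimodularity condition into the statement that $Z$ becomes central in $\g$, and the bookkeeping in the last two paragraphs that upgrades local to global symmetry and correctly matches the transverse curvature sign to the hyperbolic versus elliptic model, rather than stopping at local symmetry.
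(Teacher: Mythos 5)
Your proof is correct and follows essentially the same route as the paper: your involution $\sigma$ ($+\mathrm{id}$ on $\i$, $-\mathrm{id}$ on $\m$) is exactly the automorphism $\varphi$ the paper writes down explicitly in each of the three normal forms, and both arguments then conclude by integrating it to a point symmetry at the base point and invoking the dimension-three Cahen--Wallach classification for the non-flat cases. The only cosmetic difference is in the unipotent case, where the paper identifies Minkowski via the transverse abelian subalgebra $\Span(T,X,Z)$ acting simply transitively rather than via the curvature formula $R(A,B)C=-[[A,B],C]$.
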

\begin{proof}
   First, suppose that $\rho$ is unipotent, that is (\ref{nilpotent case}) $\rho$ is given by the derivation $$\ad_T=A_{\rho}=\begin{pmatrix}
0 &0 &0 \\ 
0 &0 &1 \\
0 &0 &0
\end{pmatrix}$$
in the basis $\left\{Z,X,Y\right\}$. The isotropy in this case can be chosen to correspond to the subalgebra $\i=\R Y$. One sees that $\Span(T,X,Z)$ is an abelian subalgebra transverse to all conjugates of $\i$. This shows (see Sect. \ref{global product}) that $\P_{\rho}$ is isometric to a translation invariant metric on $\R^3$ $i.e$ to Minkowski.\\\\
Suppose now that $\rho$ is hyperbolic, given by $$A_{b}=\begin{pmatrix}
0 &0 &0 \\ 
0 &1 &0 \\
0 &0 &-1
\end{pmatrix}$$ The brackets of $\Lie(G_{\rho})$ are given by $$ [T,X]=X, \ \ [T,Y]=-Y, \ \ \text{and} \ \ [X,Y]=Z$$
And an inner Lorenz product (\ref{hyperbolic case}) on $m=\Span(T, Y^{\prime}=X-Y, Z)$ given by $$ g(Y^{\prime},Y^{\prime})=1, \ \ g(T,Z)=-\frac{1}{2}$$ and $$g(T,T)=g(Z,Z)=g(Y^{\prime},Z)=g(T,Y^{\prime})=0$$
Consider the automorphism of $\Lie(G_{\rho})$ given by $$\varphi(T)=-T, \ \ \varphi(X)=Y, \ \ \varphi(Y)=X, \ \ \text{and} \ \ \varphi(Z)=-Z.$$
So, $\varphi$ fixes $\i=\R(X+Y)$ corresponding tho the isotropy and acts on $\Lie(G_{\rho}/\i)$ by $-Id$. Furthermore, $\varphi$ preserves $g$ (on $m$) and acts isometrically on $\P_{\rho}=G_{\rho}/I$ by fixing the point $I$ with derivative $-Id$. This shows that $\P_{\rho}$ is symmetric.\\\\
Similarly, if $\rho$ is elliptic (\ref{elliptic case}) given by $$A_{0}=\begin{pmatrix}
0 &0 &0 \\ 
0 &0 &-1 \\
0 &1 &0
\end{pmatrix}$$
The brackets given by  $$[T,X]=Y, \ \ [T,Y]=-X, \ \ \text{and} \ \ [X,Y]=Z$$
As before if  $m=\Span(T, Y, Z)$ (\ref{elliptic case}). Then $m$ is $\ad_X$-invariant and we have $$\ad_V(T)=-Y, \ \ \ad_V(Y)=Z, \ \ \text{and} \ \ \ad_V(Z)=0$$ with Lorentzian inner product $$g(Y,Y)=1, \ \ g(T,Z)=1$$ and $$g(T,T)=g(Z,Z)=g(Y,Z)=g(T,Y)=0$$ And if we consider the automorphism $$\varphi(T)=-T, \ \ \varphi(X)=X, \ \ \varphi(Y)=-Y, \ \ \text{and} \ \ \varphi(Z)=-Z.$$ we obtain, the same as before, that $\P_{\rho}$ is symmetric. The last two cases are known to be the only Cahen-Wallach spaces in dimension three (see for instance \cite{CW, KO}).
\end{proof}

\begin{lemma}\label{flat}
  $\P_{\rho}$ has constant curvature if and only if it is flat. In addition, if $\P_{\rho}$ is flat then it is either isometric to the Minkowski space with $G_{\rho}$ isomorphic to $\R\ltimes \R^3$ where $\R$ acts via a unipotent one-parameter subgroup of $\SO(1,2)$, or $\P_{\rho}$ is isometric to half Minkowski with $G_{\rho}$ non-unimodular isomorphic to $\Aff\ltimes \R^2$.

\end{lemma}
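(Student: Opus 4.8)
The plan is to reduce at once to the explicit classification of \S\ref{Heis}, where $\P_\rho$ falls into four mutually exclusive types (nilpotent, real diagonalizable, real parabolic, and non-real), and to treat the two assertions of the lemma separately: that constant curvature forces flatness, and that flatness singles out exactly two models, together with their global shape.

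For the equivalence ``constant curvature $\iff$ flat'' the only nontrivial direction is that \emph{nonzero} constant curvature cannot occur, and this I would get from a single structural feature. The central direction $Z\in\heis$ induces a nowhere-vanishing vector field $\bar Z$ on $\P_\rho$ (its fundamental vector field is nonzero everywhere because $\R Z$ is an ideal meeting $\i=\R V$ trivially), and $\bar Z$ is null and parallel. Nullness is immediate, since every $\ad_V$-invariant inner product computed in \S\ref{Heis} satisfies $g(Z,Z)=0$. Parallelism I would verify via the Koszul/Nomizu formula for the canonical connection of the reductive pair $\g=\i\oplus m$ (the complement $m$ is $\ad_\i$-invariant in each of the four cases), checking directly that $\nabla_X\bar Z=\tfrac12[X,Z]_m+U(X,Z)=0$ for $X$ ranging over $m$, where the only bracket contributing is $[T,Z]=\alpha Z$ and it is cancelled by the $U$-term. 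Once $\bar Z$ is parallel one has $R(\cdot,\cdot)\bar Z\equiv 0$; if $\P_\rho$ had constant curvature $c$, the model identity $R(X,Y)\bar Z=c\bigl(g(Y,\bar Z)X-g(X,\bar Z)Y\bigr)$ would force $c=0$ upon taking $Y=\bar Z$ and $X$ with $g(X,\bar Z)\neq 0$ (such $X$ exists since $\bar Z\neq0$ and $g$ is nondegenerate). The converse is trivial.

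To decide which $\P_\rho$ are flat I would compute the curvature case by case from the brackets and metrics of \S\ref{Heis} using the same reductive formula. Since $\dim\P_\rho=3$ the Weyl tensor vanishes, so flatness is equivalent to $\mathrm{Ric}=0$; for these plane-wave metrics the Ricci tensor collapses to a single scalar in the null direction, which is the only quantity to control. In the nilpotent case this scalar vanishes and, by Lemma \ref{symmetric}, $\P_\rho$ is Minkowski. The unimodular semisimple cases are the Cahen--Wallach spaces of Lemma \ref{symmetric}, which are symmetric but not of constant curvature, hence not flat. In the non-unimodular cases the Ricci scalar is seen to vanish precisely when $\widetilde{A_{\rho}}$ has a zero eigenvalue, i.e. when $\rho$ fixes a direction in $\heis/\ZZ$; this is exactly the degenerate branch $b=0$ of the real diagonalizable family, for which $\g=\mathfrak{aff}\ltimes\R^2$ and $G_\rho\cong\Aff\ltimes\R^2$.

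Finally I would identify the two flat models globally. In the nilpotent case the abelian transverse subalgebra $\Span(T,X,Z)$ of Lemma \ref{symmetric} realizes $\P_\rho$ as $\R^3$ with a translation-invariant Lorentz metric, and reading off the $I$-action exhibits $G_\rho\cong\R\ltimes\R^3$ with $\R$ a unipotent one-parameter subgroup of $\SO(1,2)\subset\Iso(\R^{1,2})$. For the flat non-unimodular case I would embed $G_\rho\cong\Aff\ltimes\R^2$ into $\Iso(\R^{1,2})=\SO(1,2)\ltimes\R^3$ with $\Aff$ the stabilizer of a null direction, so that $\P_\rho$ is equivariantly an open $G_\rho$-orbit in Minkowski; this is exactly the half-space picture already analyzed in \S\ref{proof-loc-sym}, where such an $L$ preserves a degenerate plane and has two half-Minkowski open orbits. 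Being a proper open subset of a flat space, this model is flat but incomplete, i.e. globally isometric to half Minkowski. I expect the main obstacle to be the bookkeeping in the non-unimodular curvature computation---cleanly isolating the single condition (a zero eigenvalue) that yields flatness---together with upgrading the local identification of the flat non-unimodular case to the global statement that it is precisely half Minkowski rather than all of Minkowski.
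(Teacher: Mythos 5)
Your proposal is correct in substance. The first half is essentially the paper's argument: the paper also kills nonzero constant curvature by noting that a non-flat constant-curvature Lorentz $3$-manifold has maximal local holonomy, hence carries no parallel null vector field, while $\P_\rho$ does. One technical caveat on your verification of parallelism: the object that is parallel is the \emph{Killing} (fundamental) vector field of $Z$, not the $G_\rho$-invariant extension of $Z\in m$; these differ off the base point exactly because $[T,Z]=\alpha Z\neq 0$ in the non-unimodular case. Applying the Nomizu formula $\tfrac12[X,Y]_m+U(X,Y)$ to elements of $m$ actually gives $\nabla_T Z=\alpha Z$ (the $U$-term reinforces rather than cancels $\tfrac12[T,Z]_m$); the correct computation is the Koszul formula for Killing fields, $2g(\nabla_AB,C)=g([A,B],C)+g([B,C],A)-g([C,A],B)$, which is what the paper uses in \S\ref{planewaves 8} and which does yield $\nabla\bar Z=0$. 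So fix the verification, not the conclusion.

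For the second half your route genuinely diverges from the paper's. You propose to compute the curvature in each normal form of \S\ref{Heis} and read off that $\mathrm{Ric}=0$ exactly when $\widetilde{A_\rho}$ has a zero eigenvalue; this is true (in Brinkmann form $2\,du\,dv+H(u)x^2du^2+dx^2$ the only Ricci component is a multiple of $\partial_x^2H\,du^2$), but note that the Brinkmann coordinates are only established later (Theorem \ref{coordinates}), so at this stage you would have to grind out the Nomizu curvature of $(\g_\rho,m,g)$ case by case --- more bookkeeping than your sketch acknowledges. The paper argues in the opposite direction: assuming $\P_\rho$ flat, $\Lie(G_\rho)$ embeds faithfully in $\so(1,2)\ltimes\R^3$, and the dimension of its projection to $\so(1,2)$ is analyzed (dimension $3$ impossible by solvability; dimension $1$ forces $\R\ltimes\R^3$ with unipotent action, hence Minkowski; dimension $2$ forces the stabilizer of a degenerate plane and, after a short bracket computation, $\Aff\ltimes\R^2$ and half Minkowski). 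That structural argument identifies the flat groups and their global models in one stroke and needs no curvature computation, whereas your computation determines which $\rho$ are flat but still requires the embedding/developing-map step for the global identification --- which you do supply at the end, in essentially the paper's form, invoking uniqueness of the invariant metric as in \S\ref{hyperbolic case}. Both routes are viable.
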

\begin{proof}
The first statement follows from the following fact: the (local) holonomy of a non-flat three dimensional Lorentz manifold with constant curvature is maximal. In particular, there cannot be a parallel null vector field. So if $\P_{\rho}$ has constant curvature then it must be flat.\\
Suppose now that $\P_{\rho}$ is flat. Then $\Lie(G_{\rho})$ has a faithful representation in $\so(1,2)\ltimes \R^{3}$. Consider the projection homomorphism
\begin{center}
    $\pi:\so(1,2)\ltimes \R^{3} \longrightarrow \so(1,2)$
\end{center}
We know that $\Lie(G_{\rho}) \cap \R^{3}$ is non-trivial and equals the kernel of $\pi$ restricted to $\Lie(G_{\rho})$ and also $\Lie(G_{\rho}) \cap \so(1,2)$ has dimension at least $1$. The case $\dim(\pi(\Lie(G_{\rho})))=3$ is not possible since $G_{\rho}$ is solvable. If that $\dim(\pi(\Lie(G_{\rho})))=1$ then necessarily $\pi(\Lie(G_{\rho}))=\Lie(G_{\rho}) \cap \so(1,2)$. In this case, $\R^3\subset\Lie(G_{\rho})$ and $G_{\rho}$ is isomorphic to $\R \ltimes \R^{3}$ where $\R$ must act via a unipotent one parameter group (otherwise $G_{\rho}$ doesn't contain a copy of Heisenberg which is not the case). In this case, $\P_{\rho}$ is isometric to Minkowski (see Lemma \ref{symmetric}).\\
Suppose now that $\dim(\pi(\Lie(G_{\rho})))=2$, then  $\dim(\Lie(G_{\rho})\cap \R^{3})=2$. So $\Lie(G_{\rho})$ is contained in the stabilizer of a (degenerate) plane $\R^{2}$ inside $\so(1,2)\ltimes \R^{3}$ (acting by conjugacy), which is isomorphic to $\operatorname{aff} \ltimes \R^{3}$. Denote the basis of the Lie algebra $\operatorname{aff}\ltimes \R^{3}$ by $\left\{h,f,e_{1},e_{2},e_{3}\right\}$ such that the stabilized plane is $\Span(e_1,e_2)$ and the brackets are given by

$$[h,f]=f \ \ \text{and} \ \ [e_{i},e_{j}]=0$$ and the matrices of $\ad_{h}, \ad_{f}$ acting on $\R^3$ with respect to the basis $\left\{e_{1},e_{2},e_{3}\right\}$ are given by
$$\ad_{f}=\begin{pmatrix}
0 &-1 &0 \\ 
0 &0 &1 \\
0 &0 &0
\end{pmatrix} \ \ \text{and} \ \
\ad_{h}=\begin{pmatrix}
1 &0 &0 \\ 
0 &0 &0 \\
0 &0 &-1
\end{pmatrix}$$
So we can suppose that $\Lie(G_{\rho})=\Span( h+\beta e_3, f+\alpha e_{3},e_{1},e_{2})$. We have that $[h +\beta e_{3},f+\alpha e_{3}]=f-\alpha e_{3}-\beta e_{2}$. Since the derived subalgebra must be contained in $\Span(f+\alpha e_{3},e_{1},e_{2})$, we deduce that $\alpha=0$. So 
\begin{center}
$\Lie(G_{\rho} )=\Span(h +\beta e_{3},f,e_{1},e_{2})$    
\end{center}
It is easy to see that $\Span(h +\beta e_{3},f-\beta e_{2})$ is a subalgebra transverse to the subalgebra $\Span(e_{1},e_{2})$. So $G_{\rho}$ is isomorphic to $\Aff\ltimes \R^{2}$ and it preserves half Minkowski (given by the stabilized plane for $\beta=0$) which must be isometric to $\P_{\rho}$ since this latter is unique (\ref{hyperbolic case}).
\end{proof}
This proves (4) and (5) in Theorem \ref{Theorem 1.1}. $\Box$
\begin{lemma}\label{isomorphism}
Except the two flat cases (see Lemma \ref{flat}), the following statements are equivalent
\begin{itemize}
    \item[1)] Up to scaling, $\P_{\rho}$ is locally isometric to $\P_{\rho^{'}}$.
\item[2)] $G_{\rho}$ is isomorphic  $G_{\rho^{'}}$.
\item[3)] Up to scaling, $\P_{\rho}$ is globally isometric to $\P_{\rho^{'}}$.
\end{itemize}

\end{lemma}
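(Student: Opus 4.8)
The plan is to prove the three equivalences by establishing the cycle $2) \Rightarrow 3) \Rightarrow 1) \Rightarrow 2)$, since the implication from global to local isometry is trivial, leaving the real content in $2)\Rightarrow 3)$ and $1)\Rightarrow 2)$.

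First I would dispatch $2) \Rightarrow 3)$. Suppose $\varphi \colon \Lie(G_\rho) \to \Lie(G_{\rho'})$ is a Lie algebra isomorphism. The key structural observation is that the derived subalgebra $[\Lie(G_\rho),\Lie(G_\rho)]$ is an invariant of the isomorphism class, and by the explicit description in \S\ref{Heis}, away from the two flat cases this derived subalgebra is precisely $\heis$ (when $A_\rho(Z) \neq 0$, i.e.\ the non-unimodular or hyperbolic/elliptic unimodular cases the image of $A_\rho$ together with $Z$ spans $\heis$). Thus $\varphi$ must carry $\heis \subset \Lie(G_\rho)$ to $\heis \subset \Lie(G_{\rho'})$, hence sends the center $\R Z$ to $\R Z$ and induces an isomorphism on the quotients $\heis/\ZZ$. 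It follows that $\varphi$ intertwines the two representations $\widetilde{A_\rho}$ and $\widetilde{A_{\rho'}}$ up to conjugacy and scaling. Since the construction of the isotropy one-parameter group $I$ (generated by a non-central $V \in \heis$ of nilpotency order $3$) is canonical up to automorphisms of $\Lie(G_\rho)$ — this is exactly what was shown case-by-case in the subsections of \S\ref{Heis}, where all admissible $V$ were proved equivalent under $\Aut(\Lie(G_\rho))$ — the isomorphism $\varphi$ can be arranged to send $I$ to a conjugate of $I'$. Then $\varphi$ descends to a diffeomorphism $\P_\rho \to \P_{\rho'}$, and because both invariant Lorentz metrics were shown to be unique up to scaling (the uniqueness statements already proven in \ref{hyperbolic case}, \ref{parabolic case}, \ref{elliptic case}, \ref{nilpotent case}), this diffeomorphism is an isometry after rescaling. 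This gives $3)$.

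The harder direction is $1) \Rightarrow 2)$: a local isometry must force a Lie algebra isomorphism. Here I would argue that for a simply connected homogeneous space that is \emph{not} locally symmetric, the full Killing algebra is determined by the germ of the metric, and by Theorem \ref{Theorem 1.1}(6) and the earlier analysis the identity component of the isometry group equals $G_\rho$ itself (for $\P_\rho$ not Minkowski). A local isometry $f \colon U \to V$ between $\P_\rho$ and $\P_{\rho'}$ then induces, via Nomizu's extension theorem \cite{Nom} (applicable since these spaces are analytic and simply connected), an isomorphism of the respective Killing algebras $\g \cong \Lie(\Iso^0(\P_\rho))$ and $\g' \cong \Lie(\Iso^0(\P_{\rho'}))$. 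Excluding the two flat cases is essential precisely because there the isometry group is strictly larger than $G_\rho$ (half-Minkowski and Minkowski have extra isometries), so the Killing algebra is no longer $\Lie(G_\rho)$ and the implication $1)\Rightarrow 2)$ can fail; this is why the lemma's hypothesis rules them out. Away from those, $\Lie(\Iso^0(\P_\rho)) = \Lie(G_\rho)$, and the induced isomorphism of Killing algebras is exactly an isomorphism $G_\rho \cong G_{\rho'}$.

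The main obstacle I anticipate is the precise identification of the full Killing algebra with $\Lie(G_\rho)$ in the non-symmetric, non-flat cases, i.e.\ justifying that there are no unexpected extra Killing fields beyond those coming from $G_\rho$; this rests on Theorem \ref{Theorem 1.1}(6) which must be in hand, and on ruling out the locally symmetric situation where a larger isometry algebra appears. A secondary subtlety is bookkeeping the scaling: the invariant metric is unique only \emph{up to scale}, so the statements must be read with the ``up to scaling'' qualifier throughout, and one must check that an abstract Lie algebra isomorphism indeed realizes the isotropy correctly rather than merely matching the underlying groups — this is where the canonical-up-to-automorphism nature of the choice of $V$ does the essential work. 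Once these two points are secured, the cycle closes and all three statements are equivalent.
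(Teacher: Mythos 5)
Your proposal is correct and follows essentially the same route as the paper: $2)\Rightarrow 3)$ uses that any isomorphism carries the derived subgroup $\Heis$ to $\Heis$ together with the case-by-case uniqueness of the invariant metric up to automorphisms and scaling, and $1)\Rightarrow 2)$ identifies the $4$-dimensional Killing algebra with $\Lie(G_{\rho})$ via Nomizu extension. The one caution is that you should not invoke Theorem \ref{Theorem 1.1}(6), since the paper deduces that statement \emph{from} this lemma; the fact you actually need --- that the Killing algebra has dimension exactly $4$ in every non-flat case (including the locally symmetric Cahen--Wallach ones, which therefore require no separate exclusion) --- is already available from Proposition \ref{isotropy} (isometry dimension $\geq 5$ forces constant curvature) combined with Lemma \ref{flat} (constant curvature forces flatness).
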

\begin{proof}
Let $G_{\rho}$ and $G_{\rho^{'}}$ be the associated groups. We know that their derived subgroups are exactly $\Heis$ (the only cases where this is not true are the flat ones). Suppose that $\P_{\rho}$ is locally isometric to $\P_{\rho^{'}}$. Then their Killing algebras $\operatorname{Kil}(\P_{\rho}), \operatorname{Kil}(\P_{\rho^{'}})$ are isomorphic. Taking into account that $\Lie(G_{\rho}) \subset \operatorname{Kil}(\P_{\rho})$ and $\Lie(G_{\rho^{'}}) \subset \operatorname{Kil}(\P_{\rho^{'}})$ and that the dimension of the Killing algebras is equal to $4$ (since we excluded the flat cases \ref{flat}), we deduce that $\Lie(G_{\rho}) = \operatorname{Kil}(\P_{\rho})=\operatorname{Kil}(\P_{\rho^{'}})=\Lie(G_{\rho^{'}})$ which means that $G_{\rho}$ and $G_{\rho '}$ are isomorphic. Suppose now that $G_{\rho}$ and $G_{\rho '}$ are isomorphic and put $\P_{\rho}=G_{\rho}/I$, $\P_{\rho^{'}}=G_{\rho^{'}}/I^{'}$. Let $\phi: G_{\rho} \longrightarrow G_{\rho^{'}}$ be a Lie group isomorphism. Since $\phi$ sends $\Heis$ to $\Heis$ (the derived subgroups), we have $\phi(I)\subset\Heis$ and $G_{\rho^{'}}/\phi(I)$ admits a $G_{\rho^{'}}$-invariant Lorentz metric isometric to $\P_{\rho}$. But, the uniqueness of the Lorentz metric on $\P_{\rho^{'}}$ (up to scaling and automorphisms) shows that $G_{\rho^{'}}/\phi(I)$, up to scaling, is isometric to $\P_{\rho^{'}}$. Hence $\P_{\rho}$ and $\P_{\rho^{'}}$ are isometric up to scaling. 
\end{proof}
Hence, Lemma \ref{isomorphism} together with Lemma \ref{flat} imply (6), (7), and (8) in Theorem \ref{Theorem 1.1}. $\Box$\\\\
\textit{Proof of part (3) in Theorem \ref{Theorem 1.1}.} Suppose that $\P_{\rho}$ is locally symmetric. Then $\P_{\rho}$ is locally isomorphic to a (simply connected) symmetric space $X$. Hence $\operatorname{Kil}(\P_{\rho})=\operatorname{Kil}(X)$, that is, $\P_{\rho}$ and $X$ have isomorphic Killing algebras. Assume that $\P_{\rho}$ is not flat, then (by Lemma \ref{flat}) we have $\operatorname{Kil}(\P_{\rho})=\Lie(G_{\rho})=\operatorname{Kil}(X)$. Let $\i_x\subset \operatorname{Kil}(X)$ be the (infinitesimal) isotropy subalgebra of a point $x\in X$. Then we have, since $X$ is symmetric, $\Lie(\operatorname{Hol}_x)\subset \i_x$ where $\operatorname{Hol}_x$ denotes the holonomy group at $x$. But, since $X$ is not flat, one has $\Lie(\operatorname{Hol}_x)=\i_x$. This shows that the $\operatorname{Hol}_x$-action on $T_xX$ is indecomposable because the $\ad(\i_x)$-action is indecomposable since it corresponds to the $\ad$-action of the isotropy $\i\subset \Lie(G_{\rho})$ which is nilpotent with nilpotency order equals $3$ (Lemma \ref{lem}). Therefore, $X$ is indecomposable. Furthermore, $X$ admits, locally, a parallel null vector field (coming from $\P_{\rho}$) which extends, uniquely, to a global null parallel vector field. Thus, $X$ is in fact a Cahen-Wallach space. But in dimension three, we have only two such spaces corresponding to the elliptic and hyperbolic plane waves with unimodular isometry groups $G_{\rho}$ \cite{KO}. So, from Lemma \ref{isomorphism}, $\P_{\rho}$ is a symmetric Cahen-Wallach space with $G_{\rho}$ unimodular elliptic or hyperbolic. Finally, all this shows that if $\P_{\rho}$ is locally symmetric not isometric to a Cahen-Wallach, then it is necessarily flat, and in this case (Lemma \ref{flat}) we know that we have only two possibilities corresponding to $G_{\rho}$ unimodular nilpotent which gives the Minkowski space and $G_{\rho}$ non-unimodular isomorphic to $\Aff\ltimes \R^2$ which gives half Minkowski.

Lastly, let us mention that parts (1) and (2) in Theorem \ref{Theorem 1.1} are proven in Proposition \ref{semi_direct_Heisenberg}. $\Box$

 \section{Plane waves, proof of Theorem \ref{plane_waves}.}\label{planewaves 8}
 
 \subsection{$\P_\rho$ is a plane wave.}
Remember the definition of the  Lorentz metric  on $\P_\rho$ in Proposition \ref{semi_direct_Heisenberg}. 
One sees that the Killing field $Z$ is null, and that its orthogonal distribution is generated by the Heisenberg algebra, i.e. the three vector fields $Z, X, Y$.  

Let $\l = \R Y + \R Z$. This is an abelian  $2$-dimensional Lie sub-algebra of $\g$. In a neighbourhood of  the basis point $1 I$, $Z$ and $Y$ are linearly independent (we assume that $I$ is the one parameter group generated by $X$). It follows that $\l$ generates the orthogonal distribution 
$Z^\perp$ (locally). 

It is proven in \cite{GL} (Theorem 3) that if a Lorentz $3$-manifold admits a degenerate distribution generated by two commuting Killing fields, then the Lorentz metric is a plane wave. This applies to our situation and yields that $\P_\rho$ is a plane wave in a neighbourhood of the basis point $1 I$. This obviously implies that $\P_\rho$ is everywhere a plane wave, by homogeneity. For easiness, let us give a proof of the Brinkmann property, that is, $Z$ is a parallel field on $\P_\rho$.

\begin{lemma}
    Let $(M,g)$ be a Lorentzian manifold and let $A,B,$ and $C$ be three Killing vector fields on $M$. Suppose that $A$ is null, $g(A,B)=0$, and $$[B,A]=0, \ \ [C,A]=\alpha A, \ \ \text{and} \ \ [C,B]=\beta A+\gamma B $$
    Then we have $\nabla_W A=0$ for every $W=A,B,$ or $C$.
\end{lemma}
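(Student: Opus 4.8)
The plan is to run everything off the Killing equation, which for any Killing field $K$ says that the endomorphism $X \mapsto \nabla_X K$ is $g$-skew, i.e. $g(\nabla_X K, Y) + g(\nabla_Y K, X) = 0$. I would apply this to both $A$ and $B$, and feed in three pieces of data: that $A$ is null (so $0 = X\cdot g(A,A) = 2g(\nabla_X A, A)$, giving $g(\nabla_X A, A) = 0$ for every $X$), that $g(A,B) = 0$, and that $[A,B]=0$, which since $\nabla$ is torsion-free means $\nabla_A B = \nabla_B A$. I will also use that $\{A,B,C\}$ is a frame, so a vector orthogonal to all three is zero by nondegeneracy of $g$. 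The case $\nabla_A A = 0$ is then immediate: skew-symmetry of $\nabla A$ gives $g(\nabla_A A, X) = -g(\nabla_X A, A) = 0$ for all $X$.

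The heart of the proof is to show $\nabla_B A = 0$ by checking it is orthogonal to each of $A, B, C$. Orthogonality to $A$ is just $g(\nabla_B A, A) = 0$ (nullity), and orthogonality to $B$ comes from skew-symmetry with both slots equal to $B$, so that $g(\nabla_B A, B) = -g(\nabla_B A, B) = 0$. The one genuinely non-routine point, and the step I expect to be the main obstacle, is showing $g(\nabla_B A, C) = 0$. I would get this by closing a short computation back on itself: differentiating $g(A,B)=0$ along $C$ gives $g(\nabla_C A, B) + g(A, \nabla_C B) = 0$, and skew-symmetry of $\nabla A$ rewrites the first term as $-g(\nabla_B A, C)$, so $g(A, \nabla_C B) = g(\nabla_B A, C)$. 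On the other hand, skew-symmetry of $\nabla B$ followed by $\nabla_A B = \nabla_B A$ gives $g(A,\nabla_C B) = g(\nabla_C B, A) = -g(\nabla_A B, C) = -g(\nabla_B A, C)$. Comparing the two evaluations forces $g(\nabla_B A, C) = -g(\nabla_B A, C)$, hence $g(\nabla_B A, C) = 0$. Being orthogonal to the whole frame, $\nabla_B A$ vanishes.

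It then remains to treat $\nabla_C A$, which I would handle by the same orthogonality bookkeeping: it is orthogonal to $A$ by nullity, to $C$ by skew-symmetry with both slots $C$, and to $B$ because $g(\nabla_C A, B) = -g(\nabla_B A, C) = 0$ by the step just proved. Hence $\nabla_C A = 0$, completing the conclusion $\nabla_W A = 0$ for $W = A, B, C$. I note that the essential identities are driven entirely by $A$ and $B$ being commuting Killing fields with $A$ null and orthogonal to $B$; the role of $C$ (Killing in the application, with $[C,A]=\alpha A$ and $[C,B]=\beta A+\gamma B$) is to provide the transverse third direction completing $\{A,B\}$ to a frame. Equivalently, orthogonality of $\nabla_B A$ to $A$ and $B$ already places it in $\{A,B\}^\perp = \R A$ in dimension three, and since $A\neq 0$ is orthogonal to $A$ and $B$ one must have $g(A,C)\neq 0$, so $g(\nabla_B A, C)=0$ again yields $\nabla_B A = 0$.
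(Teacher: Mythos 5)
Your proof is correct, and it is worth comparing with the paper's, which runs instead through the Koszul formula specialized to Killing fields, $2g(\nabla_X Y,Z)=g([X,Y],Z)+g([Y,Z],X)-g([Z,X],Y)$, and then substitutes the bracket relations $[C,A]=\alpha A$ and $[C,B]=\beta A+\gamma B$ to kill the terms $2g(\nabla_BA,C)$ and $2g(\nabla_CA,B)$. Your route uses only the skew-symmetry of $\nabla K$ for a Killing field $K$, metric compatibility, and torsion-freeness, and --- this is the genuine difference --- it never invokes the bracket hypotheses involving $C$ at all: your self-closing computation for $g(\nabla_BA,C)$ uses only that $A,B$ are commuting Killing fields with $g(A,B)=0$, and in fact the same two lines with an arbitrary vector field $X$ in place of $C$ give $-2g(\nabla_BA,X)=0$ directly, so $\nabla_AA=\nabla_BA=0$ hold with no frame argument and no hypothesis on $C$ whatsoever. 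This is slightly more general and arguably more transparent than the paper's version; what the paper's Koszul computation buys is uniformity (every required inner product drops out of one formula) at the cost of carrying the unnecessary constants $\alpha,\beta,\gamma$. One shared caveat: for the remaining assertion $\nabla_CA=0$, both you and the paper conclude from orthogonality of $\nabla_CA$ to $A$, $B$, $C$, which requires $\{A,B,C\}$ to be a pointwise frame. The lemma as stated does not include this hypothesis, but it holds in the intended application (where $A=Z$, $B=Y$, $C=T$ span the tangent space of $\P_\rho$ near the base point, the isotropy being generated by $X$); you are at least explicit about invoking it, and your closing remark correctly identifies $\{A,B\}^\perp=\R A$ and $g(A,C)\neq 0$ as the precise point where transversality of $C$ enters.
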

\begin{proof}
    The Koszul formula for three Killing vector fields $X,Y,Z$ is the following $$2g(\nabla_X Y,Z)=g([X,Y],Z)+g([Y,Z],X)-g([Z,X],Y)$$
    By evaluations we find $$g(\nabla_AA,A)=g(\nabla_AA,B)=g(\nabla_BA,A)=g(\nabla_CA,A)=g(\nabla_BA,B)=0$$
    We have also $$2g(\nabla_BA,C)=g(-\beta A-\gamma B,A)-g(\alpha A,B)=0$$ $$2g(\nabla_CA,B)=g(\alpha A,B)+g(\beta A+\gamma B,A)=0$$
This proves that $\nabla_W A=0$ for every $W=A,B,$ or $C$. In particular we have $$\nabla_{(fA+gB+hC)}A=0 \ \ \text{for every} \ \ f,g,h \in C^{\infty}(M).$$
\end{proof}
In our case, we apply the lemma for $A = Z, B= Y$ and $C = T$, and get that $Z$ is parallel. $\Box$

\bigskip

The flatness of the $Z^\perp$-leaves follows straightforwardly from the fact that $Z$ and $Y$ commute. The proof of the last property of plane waves, that is $\nabla_U R = 0$, for any $U \in Z^\perp$ is detailed  in \cite{GL}. $\Box$

\subsection{A homogeneous $3$-plane wave is isometric to some $\P_\rho$ } \label{plane_wave_P} We are now going to prove that a simply connected homogeneous $3$-plane wave $M$ is isometric to some $\P_\rho$. The Heisenberg group $\Heis$ preserves the flat metric $g_0 = 2 du dv + dx^2$. It acts by translations on $x$  and $v$, and by a linear one parameter group of unipotent transformations
 $$(v, x, u) \mapsto (v + t x - \frac{t^2}{2} u, x-tu, u)$$  

Consider now more generally  a metric of the form $g_\delta = 2 du dv + \delta(u) dx^2$ where $\delta$ is a function depending on $u$. 
It is obviously invariant by translation on $x$ and $v$.  It is also invariant under    a modification of the previous  unipotent one parameter group as follows
 $$(v, x, u) \mapsto (v + t x - \frac{t^2}{2} F_\delta(u), x-tF_\delta(u), u)$$  
 where $F_\delta(u) $ is an anti-derivative of $\frac{1}{\delta(u)}$. 
The obtained $3$-dimensional isometry group of $g_\delta$, is in fact isomorphic to 
  $\Heis$  (and acts trivially on $u$).

Remember (\ref{intro_plane_waves}) that a plane wave can be defined by having  a local Rosen coordinates where the metric has the form $g_\delta = 2 du dv + \delta(u) dx^2$. It follows that it admits an isometric infinitesimal action of the Heisenberg algebra. This action preserves individually the leaves of the orthogonal distribution $\frac{\partial} {\partial v}^\perp$.\\ 
In fact, any Killing field $U$ tangent to the $\frac{\partial} {\partial v}^\perp$-distribution belongs to $\heis$. Indeed, let $F_0$ be a leaf of $\frac{\partial} {\partial v}^\perp$, then it is (locally) affinely isomorphic to the flat  $\R^2$ endowed with a degenerate Riemannian  metric $dy^2$. Transformations preserving such a structure have the form $(z, y) \to (\lambda z + t y + a, y + b)$. If $\lambda \neq 1$, this can not be the restriction of an isometry preserving individually the leaves of $\frac{\partial} {\partial v}^\perp$, since such a transformation has a transverse linear distortion $\lambda^{-1}$. But $\lambda = 1$ exactly means that the transformation belongs to $\Heis$. Therefore, the Killing field $U$ coincides with a Killing field $W \in \heis $ on $F_0$. The Killing field $U-W$ vanishes on the hypersurface $F_0$ and hence equals $0$ (at a singularity, a non-zero Killing form has a  derivative, element of $\so(1, 2)$ and one easily sees that its $0$-set has a most dimension $1$). From all this, one infers that $\heis $ is an ideal in the Killing algebra of the plane wave.\\
It follows that if the plane wave is locally homogeneous, its Killing algebra contains at least one Killing field $T$ allowing one to pass from a $\frac{\partial} {\partial v}^\perp$-leaf to another.  Thus $\g = \R T \oplus \heis$ is an extension of the Heisenberg algebra. The associated simply connected Lie group has the form $G_\rho$, for some $\rho$, and furthermore the homogeneous plane wave $M$ is locally isometric to $\P_\rho$.\\  
We  will assume $\P_\rho$ is not flat, since this particular case is treated in Theorem \ref{locally_symmetric}. So we have  a developing map $d: M \to \P_\rho$. The developing map $d$ is equivariant with respect to a  homomorphism $h: G \to G_\rho$ where $G$ is the Lie group acting (transitively) on $M$. Since $M$ and $\P_\rho$ have same Killing algebra, and $G_\rho$ is simply connected, it follows that $h$ is an isomorphism. By equivariance, $d$ is a diffeomorphism. In other words, $M$ is identified with $\P_\rho$.

\subsection{The non-simply connected case} If $M$ is homogeneous non-simply connected, then the previous discussion shows that $\widetilde{M}=\P_{\rho}$ for some $\rho$ and $M=\Gamma\backslash \P_{\rho}$ where $\Gamma\subset G_{\rho}$ is a normal subgroup (hence central). But the center of $G_{\rho}$ is trivial unless $\rho$ is unimodular where the center is exactly the direction $Z$. And $\rho$ unimodular corresponds to either the Minkowski case, or to the two Cahen-Wallach cases (see Lemma \ref{symmetric}).

\section{Global coordinates, proof of Theorem  \ref{coordinates}.}\label{proof coor} Consider  a plane wave metric given in Brinkmann coordinates as $$g_b= 2 dudv+\frac{b x^2}{u^2}du^2+dx^2$$
on $\left\{(u, v, x), u >0\right\}$. Let us say that most computations are actually done in \cite{BO}, but only at a Killing fields level. The metric $g_b$  obviously admits a one parameter group of isometries (boosts) $\phi: (u, v, x) \to  (e^tu, e^{-t} v, x)$. These isometries act transitively on the set of $u$-levels.\\ 
Now, as any plane wave, the Heisenberg algebra acts (locally) by preserving $u$-levels and transitively on each of them. It follows that $g_b$ is locally homogeneous and, if it is non-flat, its Killing algebra equals some extension $\g_\rho$ of the Heisenberg algebra  (this will be detailed).

\subsubsection{Global action of $G_\rho$} We are now going to prove that the $\g_\rho$ infinitesimal action integrates 
to a global $G_\rho$-action. For this, we have to consider the $\heis$-action in Brinkmann coordinates.  But understanding such an action involves analyzing the transform Brinkmann to  Rosen, which  sounds  highly ``transcendental''! 

\begin{fact}
Let a  metric $g$ in   Brinkmann  form  $g= 2 dudv+ H(u) x^2 du^2+dx^2$,  defined on $(v, x), \in \R^2$, and $u \in J$, $J$ an interval.  Then $\Heis$ acts (globally, not merely locally) isometrically by preserving the $u$-levels.
\end{fact}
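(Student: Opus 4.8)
The plan is to integrate the Killing equation directly in the given Brinkmann chart, produce an explicit basis of the Heisenberg algebra, and then prove that each of its flows is complete, so that a local-to-global integration theorem applies.

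First I would look for Killing fields $V = A\,\partial_v + B\,\partial_x$ with no $\partial_u$-component (this is exactly the requirement that the flow preserve the $u$-levels). Writing out $\mathcal{L}_V g = 0$ for $g = 2\,du\,dv + H(u)x^2\,du^2 + dx^2$ and solving the components in order forces, successively, $\partial_v B = \partial_x B = 0$ (so $B = B(u)$), then $\partial_v A = 0$ and $\partial_x A = -B'(u)$ (so $A = -B'(u)x + C(u)$), and finally the $(u,u)$-equation splits, by its $x$-dependence, into $C' = 0$ and the linear second-order ODE $B'' = H(u)\,B$. Hence the space of such Killing fields is three-dimensional: the constant $C$ gives $V_0 = \partial_v$, and a basis $B_1, B_2$ of solutions of $B'' = HB$ (defined on all of $J$, since the equation is linear) gives $V_j = B_j(u)\,\partial_x - B_j'(u)\,x\,\partial_v$. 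A short bracket computation, using that the Wronskian $W = B_1 B_2' - B_1' B_2$ is constant because the ODE has no first-order term, yields $[V_0, V_j] = 0$ and $[V_1, V_2] = -W\,\partial_v = -W\,V_0$ with $W \neq 0$; thus this algebra is precisely $\heis$.

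The hard part --- and the whole point of the word ``globally'' in the statement --- is completeness of these fields, since the local action always exists but need not integrate. Here the special structure makes it transparent: a general element has the form $V = c_0\,\partial_v + B(u)\,\partial_x - B'(u)\,x\,\partial_v$ with $B$ a solution of $B'' = HB$, and its $\partial_u$-component vanishes, so $u$ is constant along each integral curve and $B(u), B'(u)$ are frozen constants. The flow equations $\dot u = 0$, $\dot x = B(u)$, $\dot v = c_0 - B'(u)\,x$ then integrate explicitly to a curve whose $x$-coordinate is affine in the flow parameter $s$ and whose $v$-coordinate is a quadratic polynomial in $s$; these are defined for all $s \in \R$. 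Hence every element of the algebra is a complete vector field, and on each level $\{u = \mathrm{const}\} \cong \R^2$ the time-$s$ maps act by globally defined affine transformations of the $(v,x)$-plane.

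Finally I would invoke Palais' integration theorem: a finite-dimensional Lie algebra of complete vector fields on a manifold integrates to a smooth action of the associated simply connected Lie group. Applied to $\heis$ and its complete realization above, and using that $\Heis$ is simply connected, this yields a global isometric $\Heis$-action; and since every generator annihilates the $u$-coordinate, the action preserves each $u$-level, as claimed. Alternatively one can bypass Palais by reading off the group law of $\Heis$ from the explicit affine time-$s$ maps, recovering the Rosen-coordinate formula used earlier. $\Box$
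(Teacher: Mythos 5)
Your proof is correct, but it takes a genuinely different route from the paper's. You integrate the Killing equation directly in the Brinkmann chart: imposing $V^u=0$ forces $V=c_0\,\partial_v+B(u)\,\partial_x-B'(u)x\,\partial_v$ with $B''=HB$, the constancy of the Wronskian identifies the algebra as $\heis$, and completeness is immediate because $u$ is frozen along each flow line so the flow is polynomial in the parameter on $(v,x)\in\R^2$; Palais' theorem (or the explicit time-$s$ maps) then produces the global $\Heis$-action. The paper instead never solves the Killing equation in Brinkmann form: it passes to Rosen coordinates, where the $\Heis$-action is given by an explicit formula, and transports that action back to the Brinkmann chart by observing that each $u$-level carries a flat affine structure whose geodesics are straight lines in the Brinkmann $(v,x)$-plane, that the $\Heis$-orbits are affine for this structure, and that the levels are complete as affine planes. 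Your argument is more self-contained and elementary --- it sidesteps the Brinkmann--Rosen change of coordinates, which the paper itself describes as ``highly transcendental'', and it yields explicit formulas for the Killing fields and hence for the solutions $B''=HB$ governing the geodesic deviation. The paper's argument buys a geometric picture (the intrinsic affine structure of the degenerate leaves and the affine nature of the $\Heis$-action on them) that it reuses later, notably in the non-extendibility proof and in the treatment of the hyperbolic Rosen models. The only point to be careful about in your version is the integration step: since you show that \emph{every} element of the three-dimensional algebra is complete (not merely a generating set), the hypotheses of Palais' theorem are satisfied in their strongest form, so the step is sound; and, as you note, one can bypass it entirely by exhibiting the affine time-$s$ maps and checking the Heisenberg group law.
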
 

\begin{proof}

Equivalence   of Brinkmann and Rosen coordinates  is done in Appendix \ref{appendix} (see \cite{Blau} for more details). 

Then, we describe the $\Heis$-action in   the Rosen form  $g_\delta= 2 du dv + \delta(u) dx^2$ as given in \ref{plane_wave_P}. Let us focus on the   $\R^2$ ($ \subset \Heis$)-action given by  $(v, x, u) \mapsto (v + a , x +b, u)$ which is isometric  and generates the horizontal foliation $\F$, whose leaves are the $u$-levels. 

All these leaves are thus $\R^2$-affine homogeneous spaces. In particular, the geodesics of $g_\delta$ tangent to $\F$ do not depend of $\delta$, that is, they are usual straight lines as in the case of the Minkowski  space corresponding to $g_1= 2 du dv + dx^2$. 

The Killing field $W$  corresponding to the translation action on the $x$-coordinate, $(v, x, u) \mapsto (v  , x +b, u)$ is thus parallel 
on each $\F$-leaf. This Killing field is however not parallel for a general $g_\delta$ (on the whole space). Let us say, there exits a map 
$u \mapsto W_u$ (depending on $\delta$), such that, on each $\F_u$ , $W$ is parallel to $W_u$ $i.e$ directed by $W_u$. 

As for  the  Brinkmann  form  $g= 2 dudv+ H(u) x^2 du^2+dx^2$, the same Killing field $W$ is parallel on each  $u$-level. In order to prove that $W$ has a (complete) global flow, it is enough to show that the $u$-levels are complete. It turns out that geodesics of these levels are still  straight lines in the (Brinkmann) $(v, x)$-coordinates (see \cite{CFS}  where geodesic equations are explained and shown to have affine solutions for horizontal initial conditions) and since $(v, x) \in \R^2$, they are complete.
 
As for the action   $(v, x, u) \mapsto (v + t x - \frac{t^2}{2} F_\delta(u), x-tF_\delta(u), u)$ of the other one parameter subgroup of $\Heis$ in Rosen coordinates, it does not have geodesic orbits, but it is affine on the $u$-levels endowed with the Rosen $(v, x)$-coordinates.  This is still affine in the Brinkmann $(v,x)$-coordinates since the geodesics are straight lines. By completeness of these $u$-levels, the flow acts globally.
\end{proof}

\subsubsection{Identification of $\rho$ as a function of $b$} Let us now show that any $\P_\rho$
(for $\rho$ non-unimodular) is isometric to a metric $g_b$, for some $b = b(\rho)$. For this, it is enough to check that the Killing algebra of $g_{b(\rho)}$ equals $\g_\rho$. Now, the computation of the Killing algebra of $g_b$ is done in \cite{BO} (Formula 3.33). It is stated there that the Heisenberg algebra $\heis = \Span(Z, X, Y)$, is extended by an element $T$, such that $[T, X] = b Y,$ $[T,  Y]= X+Y$ and $[T, Z] = Z$, in other  words $\ad_T$ equals $ A_b= \left( \begin{array}{ccc}
1 & 0 & 0 \\ 
0& 0 & 1\\ 
0 & b & 1
\end{array} \right)$ in the basis $\left\{Z, X, Y\right\}$. The associated semi-direct product is thus defined  by $\rho(t) = e^{t A}$. Let now $A$ to  be a derivation  of $\heis$, with $A(Z) \neq 0$, that is $A(Z) = aZ$, $a \neq 0$. Up to scaling (of $A$), we can assume $a= 1$, and up to adding an interior derivation $\ad_u$, $u \in \heis$, we can assume that $A$ preserves a supplementary of $Z$, say the plane $\R X \oplus \R Y$. And up to conjugacy by automorphisms, $A$ has the form $A_b$. This completes the proof of Theorem \ref{coordinates} $\qed$
\subsubsection{Hyperbolic case}  We also have an explicit global Rosen  formula for hyperbolic plane waves \ref{terminology}   $$g_\alpha = 2 du dv + u^{2 \alpha} dx^2$$ defined on $(v, x) \in \R^2,$ $u >0$ (considered, in particular, in \cite{GRT}). So $\Heis$ acts isometrically affinely in these coordinates. Observe that the linear one parameter group $(v, x, u) \mapsto (e^t v,  e^{\alpha t}x,  e^{-t} u)$, also preserves $g_\alpha$.  We know that $\Heis$ is normal in $\Isom(g_\alpha)$, and thus an extension $G_\rho$,  acts isometrically in an affine manner. So $G_\rho$ is in fact a subgroup of $\Aff(\R^3)$.  We can  show  that the representation associated to $\alpha$ is $\rho(t) = e^{t A}$, where $A$ is diagonal with entries $(1, 1- \alpha, \alpha)$.  We deduce that any hyperbolic non-unimodular plane wave $\P_\rho$ has a global Rosen model $g_\alpha$.
\subsubsection{Parabolic (non-unimodular) case} $g_b= 2 dudv+\frac{ x^2}{ 4 u^2}du^2+dx^2$
\subsubsection*{Proof that $G_{\rho}$ is isomorphic to $G_{\rho^{'}}$ if and only if $b(\rho)=b(\rho^{'})$.}
In what follows, we denote $\mathfrak{g}_{\rho}$ by $\mathfrak{g}$. Since the center $\mathfrak{z}$ of $\mathfrak{g}'=[\mathfrak{g},\mathfrak{g}]$ is equal to $[\mathfrak{g}',\mathfrak{g}']$, and  $[\mathfrak{g},\mathfrak{z}]=\mathfrak{z}$,  the adjoint representation $\ad$ of $\mathfrak{g}$ induces an action of $\mathfrak{g}/\mathfrak{g}'$ on $\mathfrak{z}$, and since this action is not trivial ($\mathfrak{z}$ is not central), there is a unique $[T]\in \mathfrak{g}/\mathfrak{g}'$ that acts as the identity. Moreover, $\ad$ induces an action  $\overline{\ad} $ of $\mathfrak{g}/\mathfrak{g}'$ on $\mathfrak{g}'/\mathfrak{z}$ (this is well-defined, thanks to the two properties mentioned at the beginning of the proof), and $\det(\overline{\ad}_{[T]})=-b$, this shows that $b$ is determined by the algebra structure of $\mathfrak{g}$.  $\Box$

\section{Proof of the completeness statement in  Theorem \ref{Causality}.}\label{s10}
Fix a  metric $g= 2 dudv+\frac{b x^2}{u^2}du^2+dx^2$,  $b \neq 0$ and let $\P_\rho$  the so defined plane wave and $G_\rho$ its isometry group (or sometimes simply $\P$ and $G$). 
\subsection{Space of null geodesics} Let $\mathcal G^0$ be the space of all  inextendible geometric (i.e. non-parametric)  isotropic geodesics of $\P_\rho$. Let $\mathcal H^0$ be the space  of  horizontal ones, i.e. those for which $u$ is constant and  are in fact orbits of the parallel field $\frac{\partial}{\partial v}$. The set of vertical i.e.  non-horizontal ones is  denoted $\mathcal V^0$, so that $\mathcal G^0 = \mathcal V^0 \cup \mathcal H^0$.

\subsubsection{Example of an incomplete (vertical) null geodesic}
 Observe that $(v, x, u) \mapsto (v, -x, u)$ is isometric and hence $\left\{ x = 0\right\}$ is geodesic. The metric on the latter plane  is $du dv $. Therefore, $t \mapsto (0, 0, t)$ is a null geodesic. Its maximal existence domain is $ ]0, + \infty[$.  Let us denote this geodesic $\gamma_0$.

\subsubsection{Vertical null geodesics are non-complete} Indeed, for such a geodesic $\gamma(t) = (v(t), x(t), u(t))$, it is known that, up to a scaling and a shift of time, one can choose $u(t) = t$, that is the coordinate $t$ plays the role of (an affine) time. The (second order) equation on $x(t)$ is linear, and $v(t)$ is an integral of it. More precisely, as detailed in \cite{CFS}, $x(t)$ satisfies, up to a constant, an equation 
$\ddot{x}(t) =  \frac{x}{t^2}$ (after identifying $u$ with a multiple $c t$). Solutions are, thus, defined on $]0, + \infty[$. Observe in fact, that this applies to all non-horizontal geodesics, null or not, in particular to all timelike geodesics.

    \subsubsection{Action of $\Isom(\P_\rho)$ on $\mathcal G^0$}
 
 \begin{proposition} The action of $G_\rho$ on $\mathcal G^0$ has exactly two orbits $\mathcal V^0$ and $\mathcal H^0$. 
 In particular, any non-horizontal null geodesic is congruent, mod $G_\rho$, to the vertical geodesic $\gamma_0$. Also, any such geodesic is  the orbit of a one parameter group of $G_\rho$.  
 
 \end{proposition}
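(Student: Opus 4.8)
The plan is to first dispose of the easy orbit $\mathcal H^0$ using homogeneity alone, then to reduce transitivity on $\mathcal V^0$ to a purely pointwise statement about the linear isotropy action, and finally to read off the one-parameter group claim from the explicit boost. To begin, I would record that every isometry of $\P_\rho$ preserves the line field $\R Z$ spanned by the (unique up to scale) null parallel field $Z=\partial_v$, and hence permutes its integral curves. Since a null geodesic is horizontal precisely when it stays tangent to $\R Z$, i.e. is a $Z$-orbit (along a null vector the condition $\dot u=0$ forces $\dot x=0$, by the form of the metric), the group $G_\rho$ preserves the partition $\mathcal G^0=\mathcal H^0\sqcup\mathcal V^0$. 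Transitivity on $\mathcal H^0$ is then immediate: through each point passes a unique horizontal null geodesic (its $Z$-orbit), so if $g\in G_\rho$ carries a point of $h_1\in\mathcal H^0$ to a point of $h_2\in\mathcal H^0$, then $gh_1=h_2$ by uniqueness, and homogeneity of $\P_\rho$ supplies such a $g$.

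The core of the argument is transitivity on $\mathcal V^0$, which I would reduce to a single point. A geometric (unparametrized) geodesic is determined by a point $p$ together with its tangent line $\ell\subset T_p\P_\rho$, and $G_\rho$ acts transitively on points; it therefore suffices to show that the isotropy subgroup $I_p\subset G_\rho$ of one point $p$ acts transitively on the set of \emph{vertical} null directions at $p$. Taking $p$ to be the base point, $I_p=\exp(\R V)$ and its linear isotropy representation is $t\mapsto\exp(t\,\ad_V)\in\SO(T_p\P_\rho)\cong\SO(1,2)$. By Lemma \ref{lem} this $\ad_V$ is nilpotent of order exactly $3$, so $\exp(t\,\ad_V)$ is a nontrivial parabolic one-parameter subgroup. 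On the circle of null directions of $T_p\P_\rho$, such a parabolic subgroup of $\SO(1,2)\cong\PSL(2,\R)$ fixes a single direction, namely $\ker\ad_V=\R Z$ (the horizontal one, using $[V,Z]=0$), and acts on the complementary arc of vertical null directions as a transitive translation flow. Hence $I_p$ is transitive on vertical null directions; combined with homogeneity this yields transitivity of $G_\rho$ on pointed vertical null geodesics, and so on $\mathcal V^0$ itself.

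Finally, for the one-parameter group statement, I would exhibit $\gamma_0$ as such an orbit directly: the boost $\phi_s\colon(v,x,u)\mapsto(e^{-s}v,\,x,\,e^{s}u)$ lies in $G_\rho$, and its orbit through $(0,0,t_0)$ with $t_0>0$ is exactly $\{(0,0,u):u>0\}=\gamma_0$. Since any $\gamma\in\mathcal V^0$ equals $g\gamma_0$ for some $g\in G_\rho$ by transitivity, $\gamma$ is the orbit of the conjugated one-parameter subgroup $g\{\phi_s\}g^{-1}$; horizontal geodesics are likewise orbits of $\exp(\R Z)$.

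The step I expect to be the main obstacle is the dynamical input at the heart of the second paragraph: verifying that the unipotent isotropy acts not merely without extra fixed directions, but \emph{transitively}, on the arc of vertical null directions. This is precisely where one uses that an order-$3$ nilpotent in $\so(1,2)$ generates a parabolic whose induced action on the boundary circle, away from its unique fixed point, is a complete translation flow. Rather than quoting $\PSL(2,\R)$ boundary dynamics, I would make this explicit by choosing a null basis of $T_p\P_\rho$ adapted to $Z$ in which $\ad_V$ is the standard regular nilpotent, and then computing the image of a fixed vertical null direction under $\exp(t\,\ad_V)$ to see it sweeps out all vertical null directions as $t$ ranges over $\R$.
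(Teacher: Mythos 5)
Your proof is correct, but the key step is argued differently from the paper. For transitivity on $\mathcal V^0$, the paper does \emph{not} use the dynamics of the parabolic isotropy on the circle of null directions; it only uses the weaker fact that the unipotent isotropy fixes no vertical null direction, so that $G_\rho$ acts \emph{freely} on the $4$-dimensional bundle $V^0$ of pointed vertical null directions, and then concludes transitivity from the dimension count $\dim V^0=\dim G_\rho=4$ together with connectedness of $V^0$ (orbits are open, hence there is only one). You instead prove the stronger pointwise statement that the isotropy $\exp(\R\,\ad_V)$, being generated by an order-$3$ nilpotent of $\so(1,2)$, acts as a translation flow on the arc of vertical null directions at a point, and you correctly identify its unique fixed direction as $\R Z$ via $\ad_V(Z)=0$; this is the standard boundary dynamics of a parabolic in $\PSL(2,\R)$ and is exactly the ``main obstacle'' you flag --- it is a genuine extra input, but a routine one, and it buys you a more explicit argument that avoids having to establish connectedness of $V^0$ and the open-orbit reasoning. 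Likewise for the last claim: the paper obtains the one-parameter group abstractly as the ($1$-dimensional, hence transitive on the geodesic) stabilizer $L$ in the identification $\mathcal V^0\cong G_\rho/L$, whereas you exhibit it concretely as the boost $(v,x,u)\mapsto(e^{-s}v,x,e^{s}u)$ whose orbit through $(0,0,t_0)$ is $\gamma_0$, then conjugate. Both routes are sound; yours is more computational and self-contained at the point level, the paper's is shorter once the topology of $V^0$ is in place.
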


 \begin{proof}  Let $T^0$ be the space of null tangent directions of $\P_\rho$. This is circle bundle over $\left\{(v, x, u) \in \R^2 \times \R^+ \right\}$.  The set $H^0$ of horizontal null directions gives a section, and so is connected. The set $V^0$ of non horizontal directions forms a bundle with fiber $\mathbb S^1 - 1 \operatorname{pt}  \cong \R$, and  thus  is connected, too. Spaces of geodesics $\mathcal H^0$ and  $\mathcal V^0$ are  just quotients of $H^0$ and $V^0$ by the geodesic flow, and hence are connected.
 The isotropy in $G_\rho$ is unipotent (of dimension $1$) fixing only horizontal directions. It follows that $G_\rho$ acts freely on $V^0$ and has $1$-dimensional isotropy on $H^0$. Since $\dim V^0= 4$, and $\dim G_\rho = 4$, connectedness of $G_\rho$ implies that it acts transitively (and freely) on $V^0$.  In particular $\mathcal V^0$ is a homogeneous space $G_\rho / L$, where $L$ is a one parameter group. From this follows that any vertical null geodesic in an isometric image of $\gamma_0$, and that any such geodesic is preserved by a one parameter group of isometries acting freely (and hence transitively since the dimension is one) on it. As for $\mathcal H^0$, it is a $G_\rho$ homogeneous surface homeomorphic to $\R \times \R^+$. 
 \end{proof}
 
 \section{Proof of the non-extendibility statement in  Theorem \ref{Causality}.}\label{s11}
In this section, $\rho$ is fixed once for all, and hence $\P_{\rho}$ will be simply denoted $\P$.  An  non-trivial extension of $(\P, g_b)$ consists of a Lorentz manifold $(N, g)$ together with an embedding $\phi: \P \to  N$,  such that  $\phi^* g = g_b$, but $\phi $ is not onto, so  $\P^\prime = \phi(\P)$ is an open proper subset of $N$. We say that $\P$ is inextendible if such $\phi$ does not exist, say alternatively  if any isometric embedding is bijective. It is important to  note  that this depends on the regularity of the metric $g$ on $N$. 
\subsubsection{Analytic case}\label{analytic} We can assume $N$ is simply connected, and then use that any locally defined Killing field extends everywhere. The Lie algebra $\g_\rho$ then acts on $N$. The leaves can not be all of dimension $3$, since this means they are all open, but then by connectedness $\P^\prime = N$.  One can exclude orbits of dimension one (paragraph \ref{curve}), and conclude that some orbit in the closure of $\P^\prime$ has dimension $2$. The same argument applies to the $\heis$-action and yields that all its orbits have dimension 2. Therefore, this 2-dimensional $\g_\rho$ orbit  coincides with a $\heis$-orbit.  Now, these $\heis$- leaves are affine $\R^2$ endowed with a degenerate form $d y^2$. The automorphism group of such a structure is $\Aff \ltimes \R^2$, where $\R^2$ acts by translation and $\Aff$, is the stabilizer in $\GL(2, \R)$ of the parallel direction $\frac{\partial}{\partial v}$, identified to $(1, 0 ) \in \R^2$ (more explicitly $\Aff$ consists of matrices $ \left( \begin{array}{cc}

a & b  \\ 

0& 1 

\end{array} \right)$, preserving the direction of $(1, 0)$ and acting isometrically transversely). We then deduce that the Lie algebra $\g_\rho$ embeds in the Lie algebra of $\Aff \ltimes \R^2$, but the letter corresponds to the flat metric $g_0$. Let us mention that the non-extendibility in the analytic case can also be derived from the main result of \cite{DM}.  It says that if the Killing algebra of an  analytic  Lorentz metric on a connected  $3$-manifold $N$ has an open orbit, then this manifold is locally homogeneous, i.e. its (full) Killing  algebra has one orbit.  In our case, if $\P^\prime \neq N$, then the Lie algebra of $N$ has dimension $\geq 5$, which implies (Lemma \ref{flat}) that $N$ has constant curvature, hence flat. 
 \subsubsection{Smooth complete case} It is shown  in  \cite{GGN} that there exists a geodesic $\gamma(t)$ in $\P$, $t \in ]-\infty, 1[$ and a parallel $2$-plane field $Q(t)$ along $\gamma$ such that the sectional curvature of $Q(t)$ tends to $\infty$ when $t \to 1$. This implies that $\P$ can not be embedded in a geodesically complete space. Here the metric $g$ (on $N$) is assumed to be $C^2$ (and \textbf{complete}). 
 \subsubsection{A synthetic proof} 
Let us give a  brief outline of a proof that works in the $C^2$ case, but one may expect to adapt it even  to  the $C^1$ (perhaps $C^0$) case (we hope to give more details elsewhere). The lightlike geodesic foliation $\F$ tangent to $\frac{\partial }{\partial v}^\perp$ is defined on $\P^\prime$. It has complete leaves. Let us now recall  a Lipschitz regularity of such foliations.  For this, it might be better to give an example brought out in particular from \cite{Zeg1, Zeg2}. Let $B^3$ be the unit open Euclidean ball, and consider $\mathcal L_s$ a family of disjoint affine $2$-dimensional subsets, which are relatively complete in $B^3$, to mean that $\mathcal L_s$ equals the intersection with $B^3$ of a complete affine 2-plane of $\R^3$ (so in $B^3$, $\mathcal L_s$ is as complete as possible). Let $\Sigma= \cup_s \mathcal L_s$, be the support of our disjoint family. The Lipschitz regularity fact is that the family is (locally) Lipschitz and in particular extends to a similar disjoint geodesic family on the closure $\overline{\Sigma}$. It turns out this generalizes to disjoint families of geodesic hypersurfaces in general pseudo-Riemannian manifolds (in fact manifolds with a connection), see \cite{Zeg1, Zeg2}. In our situation, the foliation $\F$ of $\P^\prime$ extends to a foliation on $\overline{\P^\prime}$. If $\tau$ is a transverse curve to $\F$, then $\P \cap \tau$ is an open set of $\tau$, and because  $ \dim  \tau = 1$,  we infer that $\overline{\P^\prime}$ is a regular open  domain in $N$, i.e. a   submanifold of codimension $0$ with regular (in fact geodesic)  boundary.\\
Let $F_0$ be a boundary leaf of $\F$. By continuity, as above (\ref{analytic}), this is an affine $\R^2 $ endowed with a degenerate Riemannian metric. In order to get a contradiction as above, we need to prove that $G_\rho$ acts on $F_0$ (preserving its structures).\\
For this goal, let $p$ a point of $\P^\prime$  close enough to  $F_0$, and  $\phi \in G_\rho$  a small isometry of $\P^\prime$, say $\phi$ is in a small neighbourhood of 1 in $G_\rho$ and denote $q = \phi(p)$. Consider a cone of directions in $T_p \P^\prime$ whose geodesics all cut $F_0$ transversely. These geodesics cut $F_0$ and also nearby leaves $F$ of $\F$ in an open subset (of these leaves). For $\gamma$ such a geodesic, $F$ a leaf of $\F$, close to $F_0$, $\gamma \cap F$ is one point, and $\phi (\gamma \cap F) = \phi (\gamma) \cap \phi (F)$. This formula allows one to define an extension of $\phi $ on some open subset of $F_0$ by $\phi (\gamma \cap F_0) = \phi (\gamma) \cap   F_0$.  Observe here that $\phi$ preserves $F_0$.\\
Now, to check that everything is coherent, that is, these extensions are well defined and they give rise to a group, or at least an algebra action, observe that since we have a true action on $\P^\prime$, the  construction, (in $\P^\prime$) does not depend on the choice of the point $p$. Therefore, the extensions to the leaf $F_0$ coincide since they are limits of actions on interior leaves in $\P^\prime$.\\
From all this, we infer that $G_\rho$ acts on $F_0$, which implies that it embeds in the automorphism  group of its structure, $\Aff \ltimes \R^2$, as discussed in \ref{analytic}, which is impossible.
\subsubsection{Conformal extensions} Observe that all plane waves (in dimension 3) are conformally flat. Indeed, in Rosen coordinates $g = 2 du dv + \delta(u) dx^2 = \delta (u) (2 dw dv + dx^2)$, where $dw =  \frac{du}{\delta(u)}$. Therefore, their conformal developing in the Einstein universe $\operatorname{Ein}_{1, 2}$ is a non-trivial conformal extension.

\section{Compact models.}\label{s12}
Suppose that $\P_{\rho}$ is not flat. Then $\Isom_0(\P_{\rho})=G_{\rho}$ (by Theorem \ref{Theorem 1.1}). Let $\Isom(\P_{\rho})$ denote the full isometry group and $I_x\subset \Isom(\P_{\rho})$ the full isotropy of a point $x\in \P_{\rho}$ with identity component $I^0_x=I_x\cap G_{\rho}$ consisting of a unipotent one-parameter group of $\SO(1,2)$ when acting on $T_x\P_{\rho}$. Furthermore, $I_x/I^0_x$ is finite since the image of $I_x$ inside $\O(1,2)$ (by the faithful derivative representation) is an algebraic group. This shows that $\Isom(\P_{\rho})/G_{\rho}$ is finite, which implies that if we have a compact $(\Isom(\P_{\rho}),\P_{\rho})$-manifold, then it  has a finite cover which is a  $(G_{\rho},\P_{\rho})$-manifold. So we restrict our attention only to compact $(G_{\rho},\P_{\rho})$-manifolds.
\begin{proposition}\label{complete geom}
    Let $M$ be a closed $(G_{\rho},\P_{\rho})$-manifold. Then $M$ is complete (as a $(G_{\rho},\P_{\rho})$-structure), that is, the developing map $D:\widetilde{M}\to \P_{\rho}$ is a diffeomorphism.
\end{proposition}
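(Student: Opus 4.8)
The plan is to exploit the $G_\rho$-invariant codimension-one foliation carried by $\P_\rho$ together with its invariant transverse structure, and to run an Ehresmann-type completeness argument. Writing the metric as $g=2\,du\,dv+\frac{b x^2}{u^2}du^2+dx^2$ on $\{u>0\}$, set $w=\log u$. The leaves of $\F=\partial_v^{\perp}$ are exactly the $u$-levels, i.e. the orbits of the normal nilpotent subgroup $\Heis\triangleleft G_\rho$, and both $\F$ and the closed one-form $dw$ are $G_\rho$-invariant (the boost flow translates $w$, while $\Heis$ fixes it). Since $G_\rho$ is simply connected and $I\cong\R$ is contractible, $\P_\rho\cong\R^3$ is simply connected; hence it suffices to show that the developing map $D\colon\widetilde M\to\P_\rho$, equivariant for the holonomy $h\colon\pi_1(M)\to G_\rho$, is a covering, as it is then automatically a diffeomorphism.

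First I would descend the invariant data to $M$. The pullback $D^{*}(dw)$ is $h$-invariant, hence descends to a closed one-form $\omega$ on $M$; because $D$ is a local diffeomorphism and $dw$ vanishes nowhere, $\omega$ is nowhere zero, and $\widehat{\F}:=\ker\omega$ is a codimension-one foliation on $M$ whose leaves develop into $\F$-leaves. The argument then splits into \emph{transverse} and \emph{leafwise} completeness. For the transverse part, compactness of $M$ lets one choose a complete vector field $N$ with $\omega(N)=1$; following its flow one sees that $w\circ D\colon\widetilde M\to\R$ is a surjective submersion and that $\widetilde M$ is exhibited as a foliated product over $\R$, so the base (transverse translation) structure is complete.

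The heart of the matter is leafwise completeness: each leaf $L$ of $\widehat{\F}$ must develop onto a \emph{full} model leaf $\Lambda\cong\R^2$. Here I would use that the model leaves are totally geodesic flat affine planes (in Rosen coordinates their geodesics are genuine straight lines, see \S\ref{proof coor}), that the induced leafwise structure is the homogeneous affine space $\Heis/I\cong\R^2$, and crucially that the \emph{horizontal} geodesics tangent to $\F$ are complete in $\P_\rho$ — it is only the transverse (timelike and non-horizontal null) geodesics that fail to be complete (Theorem~\ref{Causality}). Since $\Heis$ is normal, its action on $\P_\rho$ restricts to a transitive action on each model leaf that is compatible, up to the automorphisms $\Ad(h(\gamma))$, with the holonomy; this promotes the fiberwise $\Heis$-action to a global leafwise action on $M$ whose orbits are the complete horizontal geodesics, forcing $D|_{L}$ to be onto and a covering of the simply connected $\Lambda$, hence a diffeomorphism.

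The main obstacle is precisely controlling the leaves when the period group of $\omega$ is dense, so that the leaves of $\widehat{\F}$ are non-compact (even dense) and leafwise completeness cannot be read off from compactness of a compact fiber; this is exactly where one must genuinely invoke the completeness of the horizontal geodesics and the global action of the nilpotent part, rather than any Hopf--Rinow type statement (recall $M$ itself is \emph{not} geodesically complete). Once both the transverse and the leafwise structures are shown complete, a standard fibered-structure argument assembles them: $D$ maps each leaf of $\widehat{\widetilde{\F}}$ diffeomorphically onto a model leaf and maps the transverse $\R$-direction onto the base $\R$, so $D$ is a bijective local diffeomorphism, i.e. a diffeomorphism $\widetilde M\to\P_\rho$. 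This is exactly the asserted completeness of the $(G_\rho,\P_\rho)$-structure.
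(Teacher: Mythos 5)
There is a genuine gap at the heart of your ``leafwise completeness'' step. Your transverse argument (the closed $G_\rho$-invariant form $dw$, a complete vector field $N$ on the compact $M$ with $\omega(N)=1$, surjectivity of $w\circ D$) is sound, and you correctly identify that the real difficulty is that the leaves of $\widehat{\F}$ may be dense, so no compactness is available leafwise. But your resolution --- ``this promotes the fiberwise $\Heis$-action to a global leafwise action on $M$'' --- does not hold. An action or a vector field on $\P_\rho$ descends to $M=h(\pi_1(M))\backslash\P_\rho$ only if it is invariant under the (unknown) holonomy group, and the only way to guarantee this is $G_\rho$-invariance. The $\Heis$-action is \emph{not} $G_\rho$-equivariant: normality of $\Heis$ only says that conjugation by $g\in G_\rho$ twists the action by an automorphism, which is enough to descend the orbit \emph{foliation} but not the action itself. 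Concretely, a left-invariant field $L_*(W)$, $W\in\heis$, projects to a $G_\rho$-invariant field on $G_\rho/I$ iff $\Ad(\exp(tV))W\in W+\R V$, i.e.\ $[V,W]=0$; this holds only for $W\in\ZZ$. So the sole complete leafwise field you get for free on $M$ is the central one (the paper's $\mathcal{Z}_M$), which spans only one of the two directions in each leaf, and its orbits are the horizontal geodesics of Theorem~\ref{Causality}(a) --- not the full $2$-dimensional $\Heis$-orbits. (Also note that not every one-parameter subgroup of $\Heis$ has geodesic orbits; only the translational part does.)

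The paper closes exactly this gap with an extra intermediate stage that your proposal is missing: for $Y\in\heis$ transverse to $\Span(V,Z)$ one has $[V,Y]\in\R Z$, so right multiplication by $\exp(tY)$ descends to a $G_\rho$-equivariant flow on $G_\rho/K$, $K=\exp(\Span(V,Z))$, i.e.\ on the space of $\mathcal{Z}$-trajectories; this flow pulls back to $M$ and is complete there by compactness (Remark~\ref{rem_comp} realizes it by an honest vector field on $M$ tangent to the leaves and transverse to the $\mathcal{Z}_M$-orbits). Only then does one conclude that $D$ is bijective leaf by leaf, first along $\mathcal{Z}$-trajectories, then across them. Your argument would become correct if you replaced the phantom ``global leafwise $\Heis$-action'' by such a construction --- e.g.\ a leafwise vector field $U$ on $M$ of unit length for the $G_\rho$-invariant degenerate leafwise metric $g|_{T\F}$, transverse to $\mathcal{Z}_M$, complete by compactness of $M$ --- but as written the decisive step is asserted, not proved.
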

\begin{proof}
    Let $Z\in \Lie(G_{\rho})$ be the vector that generates the central direction in $\heis$ and $V$ a generator of $\Lie(I)$ where $\P_{\rho}=G_{\rho}/I$. We have that $Z$ is $\ad_V$-invariant $i.e$ $\ad_V(Z)=0$. This implies that the vector field $L_*(Z)$ on $G_{\rho}$ obtained by left translating $Z$ is $I$-right invariant. So $L_*(Z)$ projects to a $G_{\rho}$-invariant vector field on $\P_{\rho}$ which we denote by $\mathcal{Z}$. Thus, $\mathcal{Z}$ can be pulled back to a well defined vector field $\mathcal{Z}_M$ on $M$. Let $\pi^*(\mathcal{Z}_M)$ be the pull back of $\mathcal{Z}_M$ on $\widetilde{M}$ where $\pi:\widetilde{M}\to M$ is the covering projection. The developing map $D$ sends $\pi^*(\mathcal{Z}_M)$ to $\mathcal{Z}$, so it sends a $\pi^*(\mathcal{Z}_M)$-trajectory to a $\mathcal{Z}$-trajectory where $D$, restricted to each trajectory, is a local diffeomorphism into the corresponding $\mathcal{Z}$-trajectory. Since the trajectories are one-dimensional, $D$ must be injective on each $\pi^*(\mathcal{Z}_M)$-trajectory.  But, since $\pi^*(\mathcal{Z}_M)$ is complete (because $\mathcal{Z}_M$ is complete) then $D$ sends a $\pi^*(\mathcal{Z}_M)$-trajectory bijectively onto a $\mathcal{Z}$-trajectory. \\\\
    Consider now the action of the Heisenberg group $\Heis\subset G_{\rho}$ on $\P_{\rho}$. Since the stabilizer of every point $x\in \P_{\rho}$ must be contained in $\Heis$ (because $\Heis$ is normal), then the orbits of this action define a $G_{\rho}$-invariant foliation by (totally geodesics degenerate) surfaces of $\P_{\rho}$. Thus, this foliation, denoted by $\mathcal{F}$, can be pulled back to a well defined foliation $\mathcal{F}_M$ on $M$. Let $\widetilde{\mathcal{F}}_M$ be the lift of the foliation $\mathcal{F}_M$ to the universal cover $\widetilde{M}$. The vector field $\pi^*(\mathcal{Z}_M)$ is everywhere tangent to $\widetilde{\mathcal{F}}_M$, so each leaf $\widetilde{\mathcal{F}}_M(p)$ is foliated by the $\pi^*(\mathcal{Z}_M)$-trajectories and the developing $D$ map sends each leaf of $\widetilde{\mathcal{F}}_M$ equivariantly to the $\mathcal{F}$-leaves. Let now $Y\in \heis\subset \Lie(G_{\rho})$ transverse to $\R Z\oplus\R V$ inside $\heis$, and $V$, as indicated above, a generator of $\Lie(I)$. We have that $\ad_V(Y)$ is proportional to $Z$. This implies that the flow of the left-invariant vector field $L_*(Y)$ on $G_{\rho}$ preserves the surfaces $gK$ where $K$ is the subgroup of $G_{\rho}$ associated to the abelian subalgebra $\Span(V,Z)$. Hence, since $G_{\rho}/K$ is nothing but the space of all $\mathcal{Z}$-leaves on $\P_{\rho}$, we have that $Y$ generates a $G_{\rho}$-equivariant flow $\phi^t$ on the space of $\mathcal{Z}$-leaves and preserving each Heisenberg leaf. This flow, thus, can be pulled back to a complete flow (an $\R$-action), $\phi_M^t$, on the space of the $\mathcal{Z}_M$-trajectories (completeness is due to the compactness of $M$, see Remark \ref{rem_comp}). Moreover, $\phi_M^t$ can be lifted to a complete flow, $\widetilde{\phi}_M^t$, on the space of all $\pi^*(\mathcal{Z}_M)$-trajectories on $\widetilde{M}$. Now, since $D$ sends a leaf $\widetilde{\mathcal{F}}_M(p)$ to the leaf $\mathcal{F}(D(p))$ by sending a $\pi^*(\mathcal{Z}_M)$-trajectory bijectively to a $\mathcal{Z}$-trajectory, we have then, for similar reasons, that $D$ sends a $\widetilde{\phi}_M^t$-orbit to a $\phi^t$-orbit injectively (because the flow $\phi^t$ is injective since the space of $\mathcal{Z}$-trajectories inside each Heisenberg leaf is identified with $\R$ and $\phi_t$ acts on it properly freely) and surjectively (because $\widetilde{\phi}_M^t$ is complete). So, $D$ sends each $\widetilde{\mathcal{F}}_M(p)$ bijectively onto $\mathcal{F}(D(p))$.\\\\
    Finally, let $T\in \Lie(G_{\rho})$ not inside $\heis$. We have that $\ad_V(T)\in \heis$. this implies, with the same ideas, that the flow of the left-invariant vector field $L_*(T)$ preserves the cosets $gL$ where $L=\Heis$. Thus, this flow is well defined on $G_{\rho}/L$ which is the space of all $\mathcal{F}$-leaves. By similar arguments, this defines a flow on the space of all $\widetilde{\mathcal{F}}_M$-leaves on $\widetilde{M}$ which corresponds bijectively to the flow on the space of the $\mathcal{F}$-leaves on $\P_{\rho}$. This proves that $D$ is bijective, hence, a diffeomorphism.
\end{proof}
\begin{remark} \label{rem_comp}
    Observe that the space of $\mathcal{Z}_M$-trajectories, introduced in the proof, is pathological in general yet the $\R$-action of $\phi_M^t$ is well defined on it and it can also have a wild behaviour (like a transverse $\R$-action, on the leaves of a dense linear foliation on the $2$-torus, which has a dense stabilizer). In fact, one can even construct a vector field on $M$, tangent to the Heisenberg leaves and transverse to the $\mathcal{Z}_M$-orbits, and whose flow sends orbit to orbit realizing our flow $\phi_M^t$ on the space of $\mathcal{Z}_M$-orbits.
\end{remark}
\begin{corollary}\label{compact quot}
    If $M$ is a closed $(G_{\rho},\P_{\rho})$-manifold. Then $M=\Gamma\backslash \P_{\rho}$ where $\Gamma\subset G_{\rho}$ is a discrete subgroup acting freely and properly discontinuously on $\P_{\rho}$
\end{corollary}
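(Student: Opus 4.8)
The plan is to read the corollary off from the completeness established in the preceding Proposition, via the standard formalism of $(G,X)$-structures. By that Proposition, the developing map $D:\widetilde M\to\P_\rho$ of the $(G_\rho,\P_\rho)$-structure on the closed manifold $M$ is a diffeomorphism; as $\P_\rho$ is simply connected this is the strongest possible form of completeness. Recall that $D$ is equivariant with respect to the holonomy homomorphism $h:\pi_1(M)\to G_\rho$, i.e. $D\circ\gamma=h(\gamma)\circ D$ for every deck transformation $\gamma\in\pi_1(M)$ acting on $\widetilde M$. Setting $\Gamma:=h(\pi_1(M))\subset G_\rho$, the goal is to show that $\Gamma$ is discrete, acts freely and properly discontinuously on $\P_\rho$, and that $M\cong\Gamma\backslash\P_\rho$.

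First I would check that $h$ is injective and transport the deck dynamics through $D$. Injectivity is immediate: if $h(\gamma)=\mathrm{id}$ then $D\circ\gamma=D$, and since $D$ is a diffeomorphism this forces $\gamma=\mathrm{id}$. For the dynamical properties, the deck group $\pi_1(M)$ acts freely and properly discontinuously on $\widetilde M$ by ordinary covering-space theory; conjugating this action by the diffeomorphism $D$ produces exactly the $\Gamma$-action on $\P_\rho$ (by the equivariance relation above), which therefore also acts freely and properly discontinuously. The equivariant diffeomorphism $D$ then descends to a diffeomorphism (indeed isometry) $M=\pi_1(M)\backslash\widetilde M\xrightarrow{\ \sim\ }\Gamma\backslash\P_\rho$.

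It remains to verify that $\Gamma$ is discrete as a subgroup of $G_\rho$, which is the only point needing an argument beyond the formal covering theory. Suppose not; then there are pairwise distinct $\gamma_n\in\Gamma$ with $\gamma_n\to e$ in $G_\rho$. Fixing $x\in\P_\rho$ and a compact neighbourhood $K$ of $x$, continuity of the $G_\rho$-action on $\P_\rho=G_\rho/I$ gives $\gamma_n x\to x$, hence $\gamma_n K\cap K\neq\varnothing$ for all large $n$; this yields infinitely many distinct elements of $\Gamma$ moving $K$ into itself, contradicting proper discontinuity. Thus $\Gamma$ is discrete, completing the proof. The genuine difficulty has already been absorbed into the Proposition — the completeness of closed $(G_\rho,\P_\rho)$-manifolds, obtained by successively controlling the $\mathcal Z_M$-trajectories and the flows along $Y$ and $T$ — so that, granting it, the corollary is a short and essentially formal deduction. $\Box$
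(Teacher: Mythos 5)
Your proposal is correct and follows exactly the route the paper intends: the corollary is the standard $(G,X)$-structure consequence of the Proposition's conclusion that the developing map is a diffeomorphism (injectivity of the holonomy, transport of the free and properly discontinuous deck action through $D$, and discreteness of $\Gamma$), and the paper leaves precisely this formal deduction to the reader. Your discreteness argument via proper discontinuity is sound and closes the one point that needs a word beyond pure formalism.
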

 
\begin{remark}  \label{remark completeness}
The completeness result of the previous Proposition \ref{complete geom}
 and Corollary \ref{compact quot} is already known in the case where $\P_{\rho}$ is symmetric. It follows, in particular, from Theorem 2.1 in \cite{DZ} or from Corollary 2 in \cite{LS}.
 \end{remark}

\subsection{(Non-)existence of compact models} 

We have seen so far that if $\P_{\rho}$ admits compact models, that is, closed manifolds locally isometric to $\P_{\rho}$, then these compact models are, up to finite covers, of the form $\Gamma\backslash \P_{\rho}$, in the sense that they are quotients of $\P_{\rho}$ by a discrete subgroup acting freely and properly.

\begin{remark}\label{remark compact} As said in Remark \ref{remark_symmetric},   the symmetric case 
in Theorem \ref{no_compact} is known. Indeed, 
it is shown in \cite{KO} that non-flat Cahen-Wallach spaces do not admit compact quotients in dimension 3. So the non-existence of compact models follows form 
 the completeness result (as
mentioned in Remark \ref{remark completeness}).

We will give here a direct proof of this fact.\end{remark}

\begin{theorem}
 All Lorentz spaces $\P_{\rho}$, except two, do not support compact quotients ($i.e$ of the form $\Gamma\backslash\P_{\rho}$). The two exceptional cases correspond to left-invariant metrics on $\operatorname{SOL}$ and $\Heis$, where the latter is globally isometric to the Minkowski space.
\end{theorem}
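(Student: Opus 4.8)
The plan is to combine the two results just proved (the developing map of a compact model is a diffeomorphism, so that every compact model is, up to finite cover, a quotient $\Gamma\backslash\P_{\rho}$ with $\Gamma\subset G_{\rho}$ discrete acting freely, properly discontinuously and cocompactly) with a dimension count for the \emph{syndetic hull} of $\Gamma$. Since $G_{\rho}$ is solvable, the discrete group $\Gamma$ sits as a cocompact lattice in a connected subgroup $L\subseteq G_{\rho}$ (its hull), and $\Gamma\backslash\P_{\rho}$ compact forces $L\backslash\P_{\rho}$ to be compact as well. As $\dim L\le 2$ cannot give a compact orbit space on the $3$-manifold $\P_{\rho}$, one is left with $\dim L=3$ or $L=G_{\rho}$; and when $\dim L=3$ an open orbit with compact quotient is closed, hence $L$ acts simply transitively. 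So the whole problem reduces to deciding, for each $\rho$, whether either a $3$-dimensional unimodular transitive subgroup exists, or $\Gamma$ can be a lattice in $G_{\rho}$ acting properly on $G_{\rho}/I$.

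First I would treat the transitive case $\dim L=3$, where $\P_{\rho}$ is a left-invariant metric on $L$ and a compact model is a compact quotient of $L$; this forces $L$ to be unimodular. By Theorem~\ref{Theorem 1.1}(2) such an $L$ exists exactly when $\rho$ is real, and it has the shape $\R T\oplus(\R u\oplus\R Z)$ with $u$ an eigenvector of $\widetilde{A_{\rho}}$ for an eigenvalue $\lambda$; on the abelian ideal $\R u\oplus\R Z$ the derivation $\ad_{T}$ has eigenvalues $\{\lambda,\alpha\}$, where $\alpha$ is the weight of $Z$ and $\tr\widetilde{A_{\rho}}=\alpha$. Unimodularity of $L$ reads $\lambda+\alpha=0$, and solving this with $\lambda_{1}+\lambda_{2}=\alpha$ singles out the admissible $\rho$: in the hyperbolic family it gives $\{\lambda,\alpha\}=\{-1,1\}$, i.e. $b(\rho)=2$, which is precisely $\SOL$, whereas the parabolic and the $b(\rho)=0$ (half-Minkowski) subfamilies yield $\tr(\ad_{T})\neq0$ and are excluded, and the hyperbolic Cahen--Wallach case ($\alpha=0$, $\lambda=\pm1$) likewise fails $\lambda+\alpha=0$. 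For $\rho$ unipotent (Minkowski) the transitive subgroup may be taken to be $\Heis$ (or $\R^{3}$), which is unimodular, giving the $\Heis$ exception. In both surviving cases the relevant $3$-dimensional unimodular group carries a lattice (for the $\SOL$-type one chooses the generator of the $\Z$-factor so that the transverse monodromy is an integral hyperbolic matrix; for $\Heis$ one takes a Heisenberg nilmanifold), so these models genuinely exist.

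Next I would rule out $L=G_{\rho}$, which is the only route left for the \emph{elliptic} spaces, since by Theorem~\ref{Theorem 1.1}(2) they admit no $3$-dimensional transitive subgroup at all. If $\rho$ is non-unimodular (elliptic non-unimodular case) then $G_{\rho}$ is non-unimodular and has no lattice, so $L=G_{\rho}$ is impossible. If $\rho$ is unimodular — the elliptic Cahen--Wallach (oscillator) case, where $G_{\rho}$ may actually carry a lattice $\Gamma$ — the induced action on $\P_{\rho}=G_{\rho}/I$ still cannot be proper: properness of the left $\Gamma$-action on $G_{\rho}/I$ is equivalent to properness of the right $I$-action on the compact manifold $\Gamma\backslash G_{\rho}$, and $I$ is a \emph{non-compact} one-parameter subgroup generated by $V\in\heis$ with $\ad_{V}$ nilpotent. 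This right translation flow preserves the finite Haar volume of $\Gamma\backslash G_{\rho}$, so by Poincar\'e recurrence it has recurrent orbits; were all orbits periodic the flow would factor through $\Sphere^{1}$, forcing some $\exp(t_{0}V)$ to act trivially and hence to be central, contradicting non-centrality of $V$. Thus a non-closed orbit exists, the flow is not proper, and $\Gamma$ does not act properly discontinuously on $\P_{\rho}$. Hence no compact model comes from a $4$-dimensional hull.

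Putting the two cases together leaves exactly two spaces $\P_{\rho}$ admitting compact quotients: flat Minkowski space, realised as a left-invariant metric on $\Heis$ with compact models the Heisenberg nilmanifolds, and the non-flat $\SOL$ space $g=2\,dudv+\tfrac{2x^{2}}{u^{2}}du^{2}+dx^{2}$ with compact models $\Gamma\backslash\SOL$, which also recovers Theorem~\ref{no_compact}. The main obstacle I expect is the $L=G_{\rho}$ step: one must both invoke the existence of a syndetic hull for discrete subgroups of these solvable groups and, more delicately, establish the failure of properness of the non-compact isotropy flow on $\Gamma\backslash G_{\rho}$ — without it, the unimodular oscillator group, which does admit lattices, would spuriously produce a compact elliptic Cahen--Wallach model.
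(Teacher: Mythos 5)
Your overall architecture (reduce to $\Gamma\backslash\P_\rho$ via the developing-map proposition, then decide for which $\rho$ a suitable $\Gamma\subset G_\rho$ exists) matches the paper, and two of your ingredients are sound and even cleaner than the paper's: the unimodularity count $\lambda+\alpha=0$ on the transverse subalgebras $\R T\oplus\R u\oplus\R Z$ correctly isolates $\SOL$ and $\Heis$, and the duality ``$\Gamma$ proper on $G_\rho/I$ iff $I$ proper on $\Gamma\backslash G_\rho$'' kills the case of a cocompact lattice in $G_\rho$ immediately (you do not even need Poincar\'e recurrence: a non-compact group never acts properly on a compact space). The paper instead proceeds case by case on $\rho$, showing by contraction and fixed-point arguments in $G_\rho/\ZZ\cong\R\ltimes\R^2$ or $\widetilde{\sf{Euc}}(\R^2)$ that $\Gamma$ is abelian and, up to conjugacy, trapped in an explicit $2$- or $3$-dimensional subgroup, whence a contradiction with cocompactness or with unimodularity.

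The genuine gap is the step you yourself flag as an ``obstacle'': the existence of a syndetic hull for an arbitrary discrete subgroup of $G_\rho$. Syndetic hulls are guaranteed for simply connected solvable groups of exponential type or for subgroups of algebraic groups, but the elliptic extensions $\R\ltimes\Heis$ (in particular the oscillator group, i.e.\ the elliptic Cahen--Wallach case $c=0$, where $\ad_T$ has purely imaginary eigenvalues) are precisely \emph{not} of this kind --- the exponential map is not onto and the group is not algebraic. This is exactly the pitfall the paper attributes to \cite{DZ}: ``some cases of Heisenberg extensions, actually corresponding to elliptic plane waves, were ruled out incorrectly, because they were treated as algebraic groups (while they are not).'' Without a hull, your dichotomy ``$\dim L=3$ simply transitive, or $L=G_\rho$'' is unjustified, and it is unjustified precisely in the elliptic cases that are the whole point of the theorem; a discrete $\Gamma$ could a priori fail to be a lattice in any connected subgroup. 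A secondary, fixable omission: the possibility $L=G_\rho$ must also be excluded for the hyperbolic Cahen--Wallach and unipotent cases (there $G_\rho$ is unimodular and does carry lattices), not only for elliptic $\rho$; and in the $\dim L=3$ branch you should justify that an open $L$-orbit with compact quotient forces transitivity, which the paper proves only for its explicitly constructed transverse subgroups. To repair the argument along the paper's lines, replace the hull by the direct analysis: show $[\Gamma,\Gamma]\subset\Gamma\cap\Heis$ is trivial (or central) via the contracting conjugation by an element of $\Gamma\setminus\Heis$, then push $\Gamma$ into $G_\rho/\ZZ$ and use that every non-translation there has a fixed point to conjugate $\Gamma$ into a proper connected subgroup, and conclude by (non-)unimodularity.
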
 
\begin{proof} We will prove that there is no discrete subgroup $\Gamma $ of $G_{\rho}$ acting freely and cocompactly on $\P_{\rho}$, except for two cases. this is done by considering all the possible representations $\rho:\R\to \Aut(\Heis)$ (up to equivalence) then giving  separate proofs of the (non-)existence in each case. The basis for $\Lie(G_{\rho})$ will be $\left\{Z,X,Y,T\right\}$ as usual where $Z,X,$ and $Y$ generate $\heis$ and $A_{\rho}=\ad_T$ acting on $\heis$.\\\\
$\bullet$ Let $\R$ act on $\Heis$ via the representation $\rho:\R\to \Aut(\Heis)$ generated by the derivation (\ref{hyperbolic case}) $$A_{\rho}=\begin{pmatrix}
1 &0 &0 \\ 
0 &1 &0 \\
0 &0 &0
\end{pmatrix}$$ and let $G_{\rho}=\R\ltimes_{\rho}\Heis$. We have seen (\ref{hyperbolic case}) that $G_{\rho}/I$ where $\Lie(I)=\i$ is not included in $\Span(Z,X)$ or in $\Span(Z,Y)$, is a Lorentz space. Let $\Gamma\subset G_{\rho}$ be a discrete subgroup acting freely and co-compactly on $\P_{\rho}=G_{\rho}/I$. We have that the connected Lie subgroup $[G_{\rho},G_{\rho}]$ is generated by $\Span(Z,X)$. Indeed $[G_{\rho},G_{\rho}]$ is generated by the image of $A_{\rho}$ and $Z$. Hence $[\Gamma,\Gamma]$ lies inside the abelian subgroup $H$ given by $Z$ and $X$. Clearly $\Gamma$ is not contained in $\Heis$ since $\Heis \backslash \P_{\rho} \sim \R$ is not compact. So there is $\gamma\in \Gamma$ of the form $\gamma=h\exp(tT)$ where $t\neq 0$ and $h\in \Heis$. The adjoint action of $\gamma$ on $G_{\rho}$ (the action by conjugacy) preserves $H$ and acts on it via the automorphism $\Ad(\gamma)=\Ad(h)\circ \exp(tA_{\rho})$. Since $\exp(tA_{\rho})$ restricted to $H$ is of the form $\begin{pmatrix}
e^t &0\\ 
0 &e^t \\

\end{pmatrix}$, then the action of $\Ad(\gamma)$ on $H$ is a contraction (up to forward or backward iterations). But $\Ad(\gamma)$ preserves $\Gamma\cap H$, so we necessarily have $\Gamma\cap H=\left\{1\right\}$ since $\Ad(\gamma)$ is a contraction on $H$ and $\Gamma\cap H$ is discrete. We conclude that $[\Gamma, \Gamma]=\left\{1\right\}$ which means that $\Gamma$ is abelian. Consider now the quotient $\pi:G_{\rho}\to G_{\rho}/Z$ where $Z$ refers to the center of $\Heis$. We have that $G_{\rho}/Z=\R\ltimes \R^2$ where $\R$ acts on $\R^2$ via the representation $t\to A_t=\begin{pmatrix}
e^t &0 \\ 
0 &1 \\

\end{pmatrix}$ with $\R^2$ endowed the basis $\left\{\widetilde{X}, \widetilde{Y}\right\}$ obtained by projecting $X$ and $Y$. The restriction of $\pi$ to $\Gamma$ is injective since $\Gamma\cap H=\left\{1\right\}$. Thus $\pi(\Gamma)$ is an abelian subgroup of $\R\ltimes \R^2$. Furthermore, $\pi(\Gamma)$ is not contained in the translation factor, $\R^2$, of $\R\ltimes \R^2$ because $\pi^{-1}(\R^2)=\Heis$ and $\Gamma$ is not contained in $\Heis$. So, there is an element $\alpha=A_t+v$ in $\pi(\Gamma)$ with $t\neq 0$. Suppose that $\alpha$ has a fixed point when acting on $\R^2$, then, up to conjugacy, we can assume that $A_t\in \pi(\Gamma)$. Since the line $\Span(\widetilde{Y})$ is exactly the fixed subspace of $A_t$ and $\pi(\Gamma)$ is abelian, then all elements of $\pi(\Gamma)$ preserve $\Span(\widetilde{Y})$ and, hence, they are all of the form $A_t+v$ with $v\in\Span(\widetilde{Y})$. That is $\pi(\Gamma)\subset \R\oplus\Span(\widetilde{Y})$ which implies that $\Gamma\subset \pi^{-1}(\R\oplus\Span(\widetilde{Y}))=L$ where $L$ is the Lie subgroup generated by $Z,Y$, and $T$. But, we have seen that (\ref{global product}) $\P_{\rho}$ as a homogeneous Lorentz space, is identified with a left-invariant metric on $L$. This shows that $\Gamma\backslash \P_{\rho}=\Gamma\backslash L$ and $\Gamma\subset L$ is a co-compact lattice which is impossible since $L$ is not unimodular. It remains now to treat the case where all elements of $\pi(\Gamma)$ are without fixed points when acting on $\R^2$. Suppose that we are in this situation and consider the quotient $p:\R\ltimes\R^2\to (\R\ltimes\R^2)/\Span(\widetilde{Y})$ (observe that $\Span(\widetilde{Y})$ is central in $\R\ltimes\R^2$). Then $$(\R\ltimes\R^2)/\Span(\widetilde{Y})=\R\ltimes\R=\Aff $$ and the projection $p(\pi(\Gamma))$ is an abelian subgroup which is not contained in the normal translation factor $\R$ (because $\pi^{-1}(p^{-1}(\R))=\Heis$). Since every element of $\Aff$, which is not in the normal factor, has exactly one fixed point when acting on $\R$. We can assume then, up to conjugacy, that all elements of $p(\pi(\Gamma))$ fix $0$. This implies that $p(\pi(\Gamma))\subset \R$ (where $\R$ here refers to the "linear" factor). This gives that, as before, that $\Gamma \subset L$ (where $L$ is as above) and, for the same reason, this is impossible. Observe, in this case, that $\P_{\rho}$ supports compact models ($i.e$ compact manifolds locally isometric to $\P_{\rho}$) since it is flat (Lemma \ref{flat}), but the previous discussion shows that $\P_{\rho}$ does not support $(G_{\rho}, \P_{\rho})$-compact models.\\\\
$\bullet$ Let $\R$ act on $\Heis$ via the representation $\rho:\R\to \Aut(\Heis)$ generated by the derivation (\ref{hyperbolic case}) $$A_{\rho}=\begin{pmatrix}
1+b &0 &0 \\    
0 &1 &0 \\
0 &0 &b 
\end{pmatrix}$$
where we suppose $b>0$. In this case, $\mathbb{R}$ acts on  $\Heis$ via
$$t\mapsto \begin{pmatrix}
e^{t(1+b)} &0 &0 \\    
0 &e^{t} &0 \\
0 &0 &e^{tb}
\end{pmatrix}$$
in the usual basis $\left\{Z,X,Y\right\}$. So the $\mathbb{R}$-action on $\Heis$ is a contraction (for $t\to -\infty$), which implies that $\Gamma\cap  \Heis=\left\{1\right\}$ since $\Gamma\cap  \Heis$ is preserved by the conjugacy action (necessarily contracting up to forward or backward iteration) of an element of $\Gamma$ which is not in $\Heis$. But $\Gamma\cap  \Heis$ is discrete, which implies $\Gamma\cap  \Heis=\left\{1\right\}$. In particular $\Gamma$ is abelian. Consider now the quotient $\pi:G_{\rho}\to G_{\rho}/Z$. Remark that $G_{\rho}/Z=\R\ltimes \R^2 \subset \Aff(\mathbb{R}^{2})$ where $\R$ acts on $\R^2$ via the representation 
$t\mapsto A_t=\begin{pmatrix}
e^t &0 \\ 
0 &e^{bt} \\
\end{pmatrix}$. We know that, except translations, every element of $\R\ltimes \R^{2} $ has exactly one fixed point. Moreover, we have that $\pi(\Gamma)\cap \R^2=\left\{1\right\}$, because $\pi^{-1}(\R^2)=\Heis$ and $\Gamma \cap \Heis=\left\{1\right\}$. So (up to conjugacy), all elements of $\pi(\Gamma)$ fix $0\in \mathbb{R}^{2}$, i.e $\pi(\Gamma)\subset \mathbb{R}$, where $\R$ denotes the linear factor, which implies that $\Gamma \subset \pi^{-1}(\mathbb{R})=H$ with $H$ generated by $\Span(Z,T)$. But $H\backslash \P_{\rho} \cong \mathbb{R}$ where
$H$ acts freely and properly on $\P_{\rho}$ since it doesn't intersect any conjugate of $I$. This can be seen easily, since $I\subset \Heis $ and $\Heis$ is a normal subgroup, moreover $H\cap \Heis=Z$. We conclude that $\P_{\rho}$ doesn't admit any compact quotient.\\\\
$\bullet$  Let $\R$ act on $\Heis$ via the representation $\rho:\R\to \Aut(\Heis)$ generated by the derivation (\ref{elliptic case}) $$A_{\rho}=\begin{pmatrix}
2c &0 &0 \\    
0 &c &-1 \\
0 &1 &c
\end{pmatrix}$$
with $c\neq 0$, where $\R$ acts on $Z$ by $t\mapsto e^{2tc}$ and acts by similarities on $\R X\oplus \R Y$ with non-trivial  homothety factor. A similar reasoning to the preceding case (using the same notations) shows that $\Gamma$ is abelian and that, up to conjugacy, $\Gamma \subset H$ with $H$ given by $\Span(Z,T)$
which is impossible. Hence there can't be any compact quotient of $\P_{\rho} $.\\\\
$\bullet$ Let $\R$ act on $\Heis$ via the representation $\rho:\R\to \Aut(\Heis)$ generated by the derivation (\ref{hyperbolic case}) $$A_{\rho}^{b}=\begin{pmatrix}
1+b &0 &0 \\    
0 &1 &0 \\
0 &0 &b 
\end{pmatrix}$$ 
with $b<0$. We can suppose that $b \in [-1,0[$, because $A_{\rho}^{b} $ is similar to $A_{\rho}^{\frac{1}{b}}$. Observe that
\begin{center}
    $  G_{\rho} \ \ \text{is unimodular if, and only if} \ \ b=-1$
\end{center}
We know that (\ref{hyperbolic case}) in order for  $ \P_{\rho} =G_{\rho}/I $ to carry a Lorentz metric, it is necessary and sufficient that $I$ avoids $\Span(X,Z) \cup \Span(Y,Z)$. Let us assume, up to conjugacy, that $I$ lives inside $\Span(X,Y)$ and generated by $X+Y$. The orbits of  $\R$ action on $\Span(X,Y)$ are hyperbolas, so the orbit of $I$ under this action covers two opposite quarters among the four quarters of $\Span(X,Y)\setminus (\R X\cup \R Y)$. This shows that all the conjugates of $I$ cover two opposite quarters of the complimentary of $\Span(X,Z) \cup \Span(Y,Z)$ inside $\Heis$. Now in order for $\Gamma$ to act freely, it must not intersect any conjugate of $I$, So $\Gamma \cap \Heis$ must be contained in the other two quarters (with their boundaries included). Consider the quotient $\pi: G_{\rho}\to G_{\rho}/Z= \R\ltimes \R^2$, so $\pi(\Gamma)\cap \R^2$ must be inside some line $l\subset \R^2$ because, otherwise, the intersection contains a lattice and this would contradict that it is inside two quarters. Since $\pi(\Gamma) $ is not contained in $\R^{2}$, we can pick an element of $\alpha \in \pi(\Gamma) \setminus \R^{2}$, of the form $\alpha=\begin{pmatrix}
e^t &0 \\ 
0 &e^{bt} \\
\end{pmatrix}+v$. So the induced action of $\alpha$ on  $\R^2$ (by conjugacy)  is equivalent to the action of $\begin{pmatrix}
e^t &0 \\ 
0 &e^{bt} \\
\end{pmatrix}$. Hence either $\pi(\Gamma)\cap \R^2\subset \R X$ or $\pi(\Gamma)\cap \R^2\subset \R Y$. Suppose that $\pi(\Gamma)\cap\R^2\subset \R X$ (the case where $\pi(\Gamma)\cap\R^2\subset \R Y$ can be treated similarly) and consider the projection
\begin{center}
    $P:\R\ltimes\R^{2}\to \R\ltimes\R^{2}/\R X \cong \Aff$
\end{center}
Since $P(\pi(\Gamma))$ is abelian, and it is not contained in the normal factor, then, up to conjugacy, it is contained in the linear factor $\R\subset \Aff$. This means that up to conjugacy we have $\Gamma \subset L_{X}$ is a cocompact lattice with $L_{X}$ generated by $\Span(Z,X,T)$, but this is impossible since $L_{X}$ is not unimodular. Thus it remains (similarly) that $\Gamma \subset L_{Y}$ with $L_{Y}$ generated by $\Span(Z,Y,T)$. For $b=-1$, we have $G_{\rho}$ is unimodular but $L_Y$ is not. In fact, $L_Y$ is unimodular, if and only if $b=-1/2$ (remember $b\in [-1,0[$), and in this case, $L_Y=\operatorname{SOL}$ which admits cocompact lattices. \\\\
$\bullet$ Suppose now that (\ref{parabolic case}) $A_{\rho}=\begin{pmatrix}
2 &0 &0 \\    
0 &1 &1 \\
0 &0 &1
\end{pmatrix}$.
The $\R$-action on $\Heis$
 is given by $$t\mapsto \begin{pmatrix}
e^{2t} &0 &0 \\    
0 &e^{t} &f(t)\\
0 &0 &e^{t}
\end{pmatrix}$$
Consider the projection $\pi:G_{\rho}\to G_{\rho}/Z\cong \R\ltimes \R^2 \subset \Aff(\mathbb{R}^{2})$. For similar reasons as before we have that $\Gamma\cap \Heis=\left\{1\right\}$ since the $\R$-action is expanding on $\Heis$. So $\pi(\Gamma)$ is abelian, since $\pi(\Gamma)$ cannot be contained in $\R^2$. We conclude that  $\pi(\Gamma)$ is (up to conjugacy) contained in the linear factor, which is impossible since in this case $\Gamma$ is inside  $H$ given by $\Span(Z,T)$. \\\\
$\bullet$ Suppose now (\ref{elliptic case}) $A_{\rho}=\begin{pmatrix}
0 &0 &0 \\    
0 &0 &-1 \\
0 &1 &0
\end{pmatrix}$.
The $\R$-action fixes $Z$ and acts by rotations on $\Span(X,Y)$. We have
$I\subset \Heis\setminus Z$ and 
by hypothesis, $\Gamma$ acts freely. So $\Gamma \cap \Heis \subset Z$ because the orbit of $I$ under the conjugacy action covers $ \Heis\setminus Z$.
Consider Now the projection 
$\pi:G_{\rho}\to G_{\rho}/Z\cong \widetilde{\operatorname{Euc}}(\R^2)=\R\ltimes \R^2  $.
Then $\pi(\Gamma) \subset \widetilde{\operatorname{Euc}}(\R^2) $ is abelian (because $[\Gamma,\Gamma]\subset Z$) and it is not contained in the normal factor $\mathbb{R}^{2}$. Since $\Gamma \cap \Heis \subset Z$, we deduce that $\pi(\Gamma) \cap \mathbb{R}^{2}={0}$. Now, we know that, except the center (which is isomorphic to $\Z$), every element of $\widetilde{\operatorname{Euc}}(\R^2)$ either has exactly one fixed point or has no fixed point if it belongs $\R^2$ (the normal) which is not our case since $\pi(\Gamma)\cap\R^2=\left\{0\right\}$. Suppose that there is $\alpha \in \pi(\Gamma)$ with exactly one fixed point, then, up to conjugacy, one can suppose that all elements fix the origin, that is, $\pi(\Gamma)\subset \operatorname{Stab}(0)=\R$. So  $\Gamma \subset H$ given by $\Span(Z,T)$ which is clearly impossible. Now if all elements of $\pi(\Gamma)$ act trivially, then $\pi(\Gamma)$ is contained in the center of $\widetilde{\operatorname{Euc}}(\R^2)$, this means that $\Gamma$ is contained in the subgroup generated by $Z$ and a discrete subgroup of $\R$ generated by $T$, which is impossible.\\\\ 
$\bullet$ Lastly, let $\R$ act on $\Heis$ via the representation $\rho:\R\to \Aut(\Heis)$ generated by the derivation (\ref{nilpotent case}) $$A_{\rho}=\begin{pmatrix}
0 &1 &0 \\ 
0 &0 &1 \\
0 &0 &0
\end{pmatrix}$$
In this case, the $\R$-action on $\Heis$ is unipotent. Since all vectors in the complement of $\operatorname{Im}(A_{\rho})\oplus\R Z=\R X\oplus\R Z$ are equivalent, we can fix $I$ to be the one parameter subgroup generated by $Y$, in this case, the Lie subalgebra, isomorphic to $\heis$, generated by $Z,X,T$ is transverse to all the conjugates of $\R Y$. So $\P_{\rho}$ is isometric to a left-invariant metric on $\Heis$ which is globally isometric to the Minkowski space (Lemma \ref{symmetric}) and admits compact quotients.
\end{proof}
\begin{remark}
   Non-existence of compact models in the unimodular case (that is on spaces that are locally isometric to Cahen-Wallach spaces) follows from \cite{LS} Corollary 2, and the non-existence of compact quotients of $3$-dimensional Cahen-Wallach spaces follows from \cite{KO}.
\end{remark}
\section{Appendix: Rosen and Brinkmann coordinates} \label{appendix}

Consider a metric  $g$ written in Rosen coordinates $(v,x,u), \ u>0$ as $g=2dudv+u^{2a}dx^2$. Consider the following coordinate change $$v=\overline{v}+\frac{a}{2}\overline{u}^{-1}\overline{x}^2, \  x=\overline{u}^{-a}\overline{x}, \  u=\overline{u}$$
We have $\frac{\partial}{\partial\overline{x}}=(au^{a-1}x, u^{-a}, 0)$, 
$\frac{\partial}{\partial\overline{u}}=(-\frac{a}{2}u^{2a-2}x^2, -au^{-1}x, 1)$, and 
$\frac{\partial}{\partial\overline{v}}=(1, 0, 0)$.
One verifies that $$ g(\frac{\partial}{\partial\overline{x}},\frac{\partial}{\partial\overline{x}})=1, \ \  g(\frac{\partial}{\partial\overline{u}},\frac{\partial}{\partial\overline{u}})=(a^2-a)\overline{u}^{-2}\overline{x}^2, \ \ g(\frac{\partial}{\partial\overline{v}},\frac{\partial}{\partial\overline{v}})=0$$
And, $$g(\frac{\partial}{\partial\overline{x}}, \frac{\partial}{\partial\overline{u}})=0, \ \ g(\frac{\partial}{\partial\overline{u}}, \frac{\partial}{\partial\overline{v}})=1, \ \ g(\frac{\partial}{\partial\overline{x}}, \frac{\partial}{\partial\overline{v}})=0 $$
Therefore, we obtain that the metric $g$ in the coordinates $(\overline{v},\overline{x},\overline{u})$ has the form $$g=2d\overline{u}d\overline{v}+(a^2-a)\overline{u}^{-2}\overline{x}^2d\overline{u}^2+d\overline{x}^2$$

This correspondence Rosen-Brinkmann covers  cases of metrics of the form 
 $g=2d {u}d {v}+b {u}^{-2} {x}^2d {u}^2+d{x}^2$, with $b \geq - \frac{1}{4}$.

\bigskip

In general, for a metric   $g = 2dudv+ \delta(u)dx^2$, one  uses a coordinate change of the form 
 $v=\overline{v}+c(u)\overline{x}^2, \  x= \delta(u)^{-1/2}\overline{x}, \  u=\overline{u}$., where $c$ is a function to be determined.
 
 In order  to get  a Brinkmann form for the metric, 
 we must have
 $g(\frac{\partial}{\partial\overline{x}}, \frac{\partial}{\partial\overline{u}})=0$. This is equivalent to $4 c(u) + \delta(u)^{1/2} a(u)= 0$, where 
 $a(u)$ is the derivative of $\delta(u)^{-1/2}$. One then verifies that indeed, the new form of the metric is of Brinkmann type 
    $g=2d\overline{u}d\overline{v}+ d(\overline{u})\overline{x}^2d\overline{u}^2+d\overline{x}^2$, where 
    $d(u) = 2 c^\prime(u) + \delta(u) a(u)^2 $.

\printbibliography

\end{document}